\numberwithin{equation}{section}
\numberwithin{figure}{section}
\numberwithin{table}{section}
\long\def\MSC#1\EndMSC{\def\arg{#1}\ifx\arg\empty\relax\else
	{\narrower\noindent%
		{2020 Mathematics Subject Classification}: #1\\} \fi}
\long\def\PACS#1\EndPACS{\def\arg{#1}\ifx\arg\empty\relax\else
	{\narrower\noindent%
		{PACS numbers}: #1}\fi}
\long\def\KEY#1\EndKEY{\def\arg{#1}\ifx\arg\empty\relax\else
	{\narrower\noindent% 
		Keywords: #1\\}\fi}
\theoremstyle{plain}
\newtheorem{theorem}{Theorem}[section]
\newtheorem{lemma}[theorem]{Lemma}
\newtheorem{corollary}[theorem]{Corollary}
\theoremstyle{definition}
\newtheorem{example}[theorem]{Example}
\newtheorem{assumption}[theorem]{Assumption}
\theoremstyle{remark}
\newtheorem{remark}[theorem]{Remark}
\newcommand{\norm}[1]{\lVert#1\rVert}
\newcommand{\abs}[1]{\lvert#1\rvert} 
\newcommand{\inner}[1]{\langle#1\rangle}
\newcommand{\redel}{\mathop{\textup{Re}}}
\newcommand{\mspan}{\mathop{\textup{span}}}
\newcommand{\essinf}{\mathop{\textup{ess\,inf}}}
\newcommand{\esssup}{\mathop{\textup{ess\,sup}}}
\newcommand{\ident}{\mathop{\textup{id}}}
\newcommand{\di}{\mathrm{d}}   % differential
\newcommand{\R}{\mathbb{R}}
\newcommand{\N}{\mathbb{N}}
\newcommand{\C}{\mathbb{C}}
\newcommand{\Z}{\mathbb{Z}}
\begin{document}
	\title[Series reversion in Calder{\'o}n's problem]{Series reversion in Calder{\'o}n's problem}
	
	\author[H.~Garde]{Henrik Garde}
	\address[H.~Garde]{Department of Mathematics, Aarhus University, Ny Munkegade 118, 8000 Aarhus C, Denmark.}
	\email{garde@math.au.dk}
	
	\author[N.~Hyv\"onen]{Nuutti Hyv\"onen}
	\address[N.~Hyv\"onen]{Department of Mathematics and Systems Analysis, Aalto University, P.O. Box~11100, 00076 Helsinki, Finland.}
	\email{nuutti.hyvonen@aalto.fi}
	
	\begin{abstract}
	  This work derives explicit series reversions for the solution of Calder\'on's problem. The governing elliptic partial differential equation is $\nabla\cdot(A\nabla u)=0$ in a bounded Lipschitz domain and with a matrix-valued coefficient. The corresponding forward map sends $A$ to a projected version of a local Neumann-to-Dirichlet operator, allowing for the use of partial boundary data and finitely many measurements. It is first shown that the forward map is analytic, and subsequently reversions of its Taylor series up to specified orders lead to a family of numerical methods for solving the inverse problem with increasing accuracy. The convergence of these methods is shown under conditions that ensure the invertibility of the Fr\'echet derivative of the forward map. The introduced numerical methods are of the same computational complexity as solving the linearised inverse problem. The analogous results are also presented for the smoothened complete electrode model.
	\end{abstract}	
	\maketitle
	
	\KEY
	Calder\'on problem,
	electrical impedance tomography, 
	series reversion.
	\EndKEY
	
	\MSC
	35R30,
	41A58,
	47H14.
	\EndMSC
	
	\section{Introduction} \label{sec:intro}
	
	Let $\Omega$ be a bounded Lipschitz domain in $\R^d$, $d\geq 2$. Calder\'on's inverse conductivity problem consists in determining the coefficient $A$ in the generalised Laplace equation
	\begin{equation}
          \label{eq:gen_lapla}
		-\nabla\cdot (A\nabla u) = 0 \text{ in } \Omega
	\end{equation}
	from boundary measurements, i.e.~from Cauchy data of solutions to \eqref{eq:gen_lapla}. In this work, idealised boundary measurements are modelled by a local {\em Neumann-to-Dirichlet} (ND) map $\Lambda(A)$ that may be defined on an arbitrarily small relatively open subset $\Gamma$ of $\partial \Omega$. The considered coefficient $A$ is allowed to be anisotropic and complex-valued; more precisely, $A$ is assumed to be an element of
	\begin{equation*}
		L^\infty_+(\Omega) = \Bigl\{ A\in L^\infty(\Omega;\mathbb{C}^{d\times d}) \bigm| \exists c_A > 0,\, \forall \xi\in \mathbb{C}^d : \essinf_{x\in\Omega}\,\redel[(A(x)\xi)\cdot \overline{\xi}\,] \geq c_A \abs{\xi}^2 \Bigr\}.
	\end{equation*}
    The main result of this work is an asymptotic formula for the solution of Calder\'on's problem, which leads to a family of numerical methods of arbitrarily high order for reconstructing an additive perturbation $B\in L^\infty(\Omega; \mathbb{C}^{d\times d})$ to a known coefficient $A\in L^\infty_+(\Omega)$ from (partial) knowledge of $\Lambda(A + B)$.

    Let us be more precise. Based on the reversion of the Taylor series for the analytic forward map $A \mapsto \Lambda(A)$ (cf.~\cite{Calderon1980,Garde_2019c}), we prove an explicit asymptotic formula for reconstructing $B$,
	\begin{equation}
          \label{eq:intro_expansion}
		B = \sum_{j=1}^K F_j + O(\norm{B}^{K+1}), \qquad K \in \N,
	\end{equation}
	where $F_j = O(\norm{B}^j)$ are solely based on $A$ and $\mathscr{P}\Lambda(A+B)\mathscr{P}$, and $\mathscr{P}$ can be chosen as the orthogonal projection onto any closed mean free subspace of $L^2(\Gamma)$, subject to the following conditions. For \eqref{eq:intro_expansion} to be valid, the projected Fr\'echet derivative $\mathscr{P}D\!\Lambda(A;\,\cdot\,)\mathscr{P}$ must be injective on a closed subspace $\mathcal{W} \subset  L^\infty(\Omega; \mathbb{C}^{d\times d})$, known \emph{a priori} to contain $B$, and it must also map $\mathcal{W}$ onto a closed complemented subspace in a suitable space of linear operators. Under these assumptions there exists a projection $Q$, acting on an appropriate Banach space of linear operators and ensuring the compatibility with $\mathcal{W}$, such that the mapping
	\begin{equation*}
		B\mapsto Q(\mathscr{P}\Lambda(A+B)\mathscr{P}-\mathscr{P}\Lambda(A)\mathscr{P})
	\end{equation*}
	has an analytic inverse for small enough $B$, and \eqref{eq:intro_expansion} is its truncated Taylor series. For details on the projection $Q$ we refer to Section~\ref{sec:reversion} and Appendix~\ref{sec:HS}, as well as to the implementation details in Section~\ref{sec:implementation} on how to avoid explicitly forming $Q$ if $\mathcal{W}$ is finite-dimensional. For completeness, it should be mentioned that a related series reversion approach based on the Born series has previously been considered for Calder\'on's problem in~\cite{Arridge_2012}.
	
	The computational complexity of a numerical implementation of \eqref{eq:intro_expansion} for any fixed $K \in \N$ is of the same order as that of solving the corresponding linearised inverse problem. That is, the number of required floating point operations is bounded by a  $K$-dependent constant times the number of operations needed for solving the linearised problem, independently of the employed level of discretisation for~\eqref{eq:gen_lapla}. Furthermore, all ill-conditioned steps in an implementation of \eqref{eq:intro_expansion} correspond to inverting the first derivative $\mathscr{P}D\!\Lambda(A;\,\cdot\,)\mathscr{P}$, and thus any regularisation method designed for linear inverse problems can be used in connection to \eqref{eq:intro_expansion}.

    The imaging modality that corresponds to Calder\'on's problem is {\em electrical impedance tomography} (EIT), where an electrical conductivity distribution is reconstructed from boundary measurements; see the review articles \cite{Borcea2002a,Borcea2002,Cheney1999} and the references therein for more information on EIT. Modelling measurements of EIT by a local ND map corresponds to the {\em continuum model} (CM) of EIT with partial data. In addition to considering the CM, we also present analogous series reversion results for the {\em smoothened complete electrode model} (SCEM) \cite{Hyvonen2017}, which is a generalisation of the standard {\em complete electrode model} (CEM) of EIT~\cite{Cheng89,Somersalo1992} with the potential for more efficient numerical solution. In particular, all our results also apply to the standard CEM which is capable of modelling EIT up to the measurement precision. The analyticity of the forward map of the CEM, a main tool in our analysis, has previously been considered in~\cite{GardeStaboulis_2016}. 

    If $\mathcal{W}$ is finite-dimensional, the injectivity of $\mathscr{P}D\!\Lambda(A;\,\cdot\,)\mathscr{P}$ on $\mathcal{W}$ can often be guaranteed by assuming a high enough number of boundary measurements compared to the dimension of $\mathcal{W}$. See~\cite{Alberti2021} for such a result for the CM and \cite{Lechleiter2008} for related analysis in the framework of the CEM. More generally, the unique and Lipschitz-stable solution of the {\em nonlinear} Calder\'on problem, with a finite-dimensional parametrisation of the unknown coefficient, has previously been considered in \cite{Alberti2019,Alberti2020} for the CM and in \cite{Harrach_2019} for the CEM.

    Let us briefly review some global uniqueness results for Calder\'on's problem; see the review papers \cite{Kenig_2014,Uhlmann2009} and the references therein for more information. For smooth enough isotropic real coefficients and complete boundary measurements ($\Gamma = \partial \Omega$), the global uniqueness was shown for $d\geq 3$ in \cite{Sylvester1987} and for $d=2$ in \cite{Nachman1996}; the unique identifiability of piecewise analytic coefficients was established already in \cite{Kohn1985}. The regularity assumptions on the coefficient have since been reduced to Lipschitz continuity for $d \geq 3$~\cite{CaroRogers2016} and $L^\infty$ for $d=2$~\cite{Astala2006a}. 
    
    There are numerous uniqueness results for the isotropic Calder\'on problem with partial data (see,~e.g.,~\cite{Ferreira2009,Imanuvilov2010,Imanuvilov_2015,Isakov2007,Kenig_2013,Kohn1985}). As the partial data case for the CM is in the focus of our attention, it should be mentioned that there are also previous reconstruction algorithms suitable for such a task; see,~e.g.,~\cite{Garde_2019b,Nachman2010} as well as,~e.g.,~\cite{Garde2020c,Hanke03,Harrach10,Harrach13} for the more specific task of detecting inclusions.
    
    It is well-known that Calder\'on's problem is not uniquely solvable for anisotropic coefficients in general. However, in two dimensions it has been shown that an anisotropic $L^\infty$-coefficient is uniquely determined up to a pushforward by an $H^1$-diffeomorphism that fixes the boundary \cite{Astala2005,Sylvester1990}. Moreover, by sufficiently restricting the considered class of anisotropic coefficients, there are actually examples of unique identification \cite{Alessandrini2017,Alessandrini2018}.

    Since it is possible to approximate CM measurements by those of the CEM if the number of electrodes tends to infinity and the electrodes cover the object boundary in a controlled manner~\cite{GardeHyvonen2021,Hyvonen09}, some of the aforementioned global uniqueness results on Calder\'on's problem can be transferred to the framework of the CEM in a sense of limits.
        
    This article is organised as follows. Section~\ref{sec:continuum} recalls the CM for matrix-valued coefficients, and Section~\ref{sec:taylor} introduces a Taylor series representation for the associated forward map. Sections \ref{sec:SCEM} and~\ref{sec:taylor_SCEM} provide the analogous analysis for the SCEM; readers not interested in electrode models of EIT can skip these sections. Our main results are presented in Section~\ref{sec:reversion}, where the recursive technique for inverting a suitably restricted relative forward map of the CM or the SCEM is presented. Section~\ref{sec:implementation} considers efficient implementation of the introduced family of numerical methods for approximately solving Calder{\'o}n's problem, and it also provides a couple of numerical examples. Appendix~\ref{sec:HS} employs a Hilbert--Schmidt structure in two spatial dimensions for systematic selection of the projection $Q$ needed in our analysis.
	
	\subsection{Some notational remarks} \label{sec:notation}
	
	$\mathscr{L}(X,Y)$ is the space of bounded linear operators between Banach spaces $X$ and $Y$, with the shorthand notation $\mathscr{L}(X) = \mathscr{L}(X,X)$. The corresponding spaces of compact operators are denoted by $\mathscr{L}_\textup{C}(X,Y)$ and $\mathscr{L}_\textup{C}(X)$. More generally, we denote the space of bounded $k$-linear maps from $X^k$ to $Y$ by $\mathscr{L}^k(X,Y)$, and equip it with the norm
	\begin{equation*}
		\norm{F}_{\mathscr{L}^k(X,Y)} = \sup\bigl\{ \norm{F(x_1,\dots,x_k)}_Y \mid \norm{x_j}_X \leq 1,\enskip j=1,\dots,k \bigr\}.
	\end{equation*}
	
	The Euclidean inner product on $\mathbb{C}^n$ is denoted $x\cdot\overline{y}$ for $x,y\in\mathbb{C}^n$. In particular, the ``dot'' is used as a bilinear mapping. The Euclidean norm of $x\in\mathbb{C}^n$ is denoted $\abs{x}$.
	
	Our analysis considers several operators that depend on a parameter as well as maps into spaces of operators. To allow a readable notation, we often separate variables of different natures by semicolons. As an example, we write $D^2\!\Lambda(A;B_1,B_2)f$, instead of $(D^2\!\Lambda(A)(B_1,B_2))f$, for the second derivative of the forward map $\Lambda$ at $A$ evaluated in directions $B_1$ and $B_2$ and operating on a Neumann boundary value $f$. In particular, note that $D^2\!\Lambda(A;B_1,B_2)f$ depends nonlinearly on $A$ but linearly on $B_1$, $B_2$, and $f$. When arguments are suppressed, we often use the notation $D^2\!\Lambda(A)$ instead of $D^2\!\Lambda(A; \,\cdot\,, \,\cdot\,)$.
	
	\section{Continuum model} \label{sec:continuum}
	
	Let $\Omega\subset \mathbb{R}^d$,  $d\in\mathbb{N}\setminus\{1\}$, be a bounded Lipschitz domain and let $\Gamma\subseteq \partial\Omega$ be relatively open. We define a norm on $L^\infty(\Omega; \mathbb{C}^{d\times d})$ via
	\begin{equation*}
		\norm{B} = \esssup_{x\in\Omega} \norm{B(x)}_2, \quad B\in L^\infty(\Omega; \mathbb{C}^{d\times d}),
	\end{equation*}
	where $\norm{\, \cdot \,}_2$ denotes the standard spectral norm. For $B\in L^\infty(\Omega; \mathbb{C}^{d\times d})$ and almost all $x\in\Omega$, it obviously holds
	\begin{equation*} 
		\abs{B(x)\xi} \leq \norm{B}\abs{\xi}, \quad \xi\in\mathbb{C}^d.
	\end{equation*}
	Moreover,
	\begin{equation*}
		\inner{w,v}_B = \int_{\Omega} (B\nabla w)\cdot \overline{\nabla v}\,\di x
	\end{equation*}
	defines a continuous sesquilinear form on $H^1(\Omega)$ and on its subspace
	\begin{equation*}
		H^1_\diamond(\Omega) = \{ w\in H^1(\Omega) \mid \inner{1,Tw}_{L^2(\Gamma)} = 0 \}.
	\end{equation*}
    Here $T: H^1(\Omega) \to L^2(\Gamma)$ is the Dirichlet trace operator onto $\Gamma$. We equip $H_\diamond^1(\Omega)$ with the norm
	\begin{equation*}
		\norm{w}_{*}^2 = \int_\Omega \abs{\nabla w}^2\,\di x,
	\end{equation*}
	which is equivalent to the standard $H^1(\Omega)$-norm on $H_\diamond^1(\Omega)$ by virtue of a Poincar\'e inequality:
	\begin{equation*}
		\norm{w}_{*}^2\leq \norm{w}_{H^1(\Omega)}^2 \leq C_{\textup{P}} \norm{w}_{*}^2, \quad w\in H^1_{\diamond}(\Omega).
	\end{equation*}
	
	If $A\in L^\infty_+(\Omega)$, then also $A+B \in L^\infty_+(\Omega)$ for any $B\in L^\infty(\Omega; \mathbb{C}^{d\times d})$ with $\norm{B}<c_A$, where $c_{A+B} = c_A-\norm{B}$ may be used. The following continuity and coercivity estimates hold for all $w,v\in H^1_\diamond(\Omega)$:
	\begin{align}
		\abs{\inner{w,v}_A} &\leq \norm{A}\norm{w}_*\norm{v}_*, \notag\\[1mm]
		\abs{\inner{v,v}_A} &\geq \redel\inner{v,v}_A \geq c_A\norm{v}_*^2, \label{eq:coercive1}
	\end{align}
as easily deduced from the above definitions.
	
	The CM corresponds to the following elliptic boundary value problem with a coefficient $A\in L^\infty_+(\Omega)$:
	\begin{align*}
		-\nabla\cdot(A\nabla u) &= 0 \text{ in } \Omega, \\[1mm]
		\nu\cdot(A\nabla u) &= \begin{cases}
			f & \text{ on } \Gamma, \\
			0 & \text{ on } \partial\Omega\setminus \overline{\Gamma}.
		\end{cases} 
	\end{align*}
	Here, $\nu$ is the exterior unit normal of $\partial \Omega$, and the Neumann boundary value $f$ belongs to the $\Gamma$-mean free space
	\begin{equation*}
		L_\diamond^2(\Gamma) = \{ w\in L^2(\Gamma) \mid \inner{1,w}_{L^2(\Gamma)} = 0 \}.	
	\end{equation*}
	
	The weak form for the CM is
	\begin{equation} \label{eq:weakform}
		\inner{u,v}_A = \inner{f,Tv}_{L^2(\Gamma)}, \quad \forall v\in H^1(\Omega).
	\end{equation}
	Due to the Lax--Milgram lemma, \eqref{eq:weakform} has a unique solution $u$ in $H^1_\diamond(\Omega)$ satisfying the bound
	\begin{equation}
		\norm{u}_{*} \leq \frac{C_{\textup{T}}}{c_A}\norm{f}_{L^2(\Gamma)}, \label{eq:ubound}
	\end{equation}
	where $C_{\textup{T}} = \norm{T}_{\mathscr{L}(H_\diamond^1(\Omega),L_\diamond^2(\Gamma))}$. We occasionally write $u = u_f^A$ in order to be specific about the connection of $u$ to $A$ and $f$.
	
	Let us then introduce three mappings related to the CM:
	\begin{enumerate}[(i)]
		\item $N : L^\infty_+(\Omega) \to \mathscr{L}(L^2_\diamond(\Gamma),H^1_\diamond(\Omega))$ defined by $N(A)f = u_f^A$.
		\item $\Lambda : L^\infty_+(\Omega) \to \mathscr{L}_\textup{C}(L^2_\diamond(\Gamma))$ defined by $\Lambda(A)f = TN(A)f = Tu_f^A$.
		\item $P_A \in \mathscr{L}(L^\infty(\Omega;\mathbb{C}^{d\times d}),\mathscr{L}(H^1_\diamond(\Omega)))$ for $A\in L^\infty_+(\Omega)$ defined by
		\begin{equation} \label{eq:defP}
			\inner{P_A(B)y ,v}_A = -\inner{y,v}_B, \quad \forall v\in H^1(\Omega),
		\end{equation}
		where $B \in L^\infty(\Omega;\mathbb{C}^{d\times d})$ and $y \in H^1_\diamond(\Omega)$.
	\end{enumerate}
    The operator $P_A$ is well-defined due to the Lax--Milgram lemma that guarantees the unique solvability of the variational problem \eqref{eq:defP} and also yields the estimate
    \begin{equation}
    	\norm{P_A(B)}_{\mathscr{L}(H^1_\diamond(\Omega))} \leq \frac{\norm{B}}{c_A}. \label{eq:Pbound}
    \end{equation}
    The compactness of $\Lambda(A)$ is a well-known consequence of compact embeddings between Sobolev spaces on $\partial \Omega$; cf.~Remark~\ref{rm:quotient}.

	For a given $A\in L^\infty_+(\Omega)$, the linear operator $\Lambda(A)$ is called the local ND map on the boundary piece $\Gamma$, while the nonlinear map $\Lambda$ is the forward map of the partial data Calder\'on problem for the CM.
	
	\section{Taylor series for the CM} \label{sec:taylor}
	
	We start by deriving a Taylor series for $N$ and the forward map $\Lambda$. These series have a maximal radius of convergence in the sense that they converge for all perturbations $B\in L^\infty(\Omega; \mathbb{C}^{d\times d})$ with $\norm{B} < c_A$, which is the natural condition for ensuring $A+B\in L^\infty_+(\Omega)$. Note that the proofs in \cite[Appendix~A]{Garde_2019c} can be directly adapted to our current setting that allows more general coefficients and local ND maps. Be that as it may, we present below the key ideas of the proofs for the sake of completeness.
	
	\begin{lemma}
		\label{lemma:perturbop}
		$P_A(B)$, and more generally $P_A$, is infinitely times continuously Fr\'echet differentiable with respect to $A$. Its first derivative $D_A P_A(B)$ is given by
		\begin{equation*}
			D_A P_A(B;C) = P_A(C)P_A(B)
		\end{equation*}
		for $A \in L^\infty_+(\Omega)$ and $B, C \in L^\infty(\Omega; \mathbb{C}^{d\times d})$.
	\end{lemma}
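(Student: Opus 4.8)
The plan is to argue directly from the variational definition \eqref{eq:defP}, exploiting that the form $\inner{\,\cdot\,,\cdot\,}_M$ depends linearly on the coefficient matrix $M$. The crux is an exact resolvent-type identity relating $P_{A+C}(B)$ to $P_A(B)$, from which the asserted derivative — and in fact the full analyticity — falls out at once. First I would establish, for $A \in L^\infty_+(\Omega)$ and $B, C \in L^\infty(\Omega;\mathbb{C}^{d\times d})$ with $\norm{C} < c_A$ (so that $A + C \in L^\infty_+(\Omega)$ and $P_{A+C}(B)$ exists by Lax--Milgram), the identity
\[
	P_{A+C}(B) - P_A(B) = P_A(C)\,P_{A+C}(B).
\]
To see this, fix $y \in H^1_\diamond(\Omega)$ and $v \in H^1(\Omega)$, write $\inner{\,\cdot\,,\cdot\,}_{A+C} = \inner{\,\cdot\,,\cdot\,}_A + \inner{\,\cdot\,,\cdot\,}_C$ in the defining relation for $P_{A+C}(B)y$, and subtract the relation for $P_A(B)y$; since the right-hand side $-\inner{y,v}_B$ is common to both, one obtains $\inner{(P_{A+C}(B)-P_A(B))y, v}_A = -\inner{P_{A+C}(B)y, v}_C$. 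Recognising the right-hand side, via \eqref{eq:defP} applied to $z = P_{A+C}(B)y$, as $\inner{P_A(C)P_{A+C}(B)y, v}_A$, and invoking the uniqueness granted by coercivity \eqref{eq:coercive1}, the identity follows.

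Next I would convert this into a Neumann series. Rewriting the identity as $(\ident - P_A(C))P_{A+C}(B) = P_A(B)$ and using the bound \eqref{eq:Pbound}, which gives $\norm{P_A(C)}_{\mathscr{L}(H^1_\diamond(\Omega))} \leq \norm{C}/c_A < 1$, the operator $\ident - P_A(C)$ is boundedly invertible, whence
\[
	P_{A+C}(B) = \sum_{n=0}^{\infty} P_A(C)^{n}\,P_A(B),
\]
the series converging in $\mathscr{L}(H^1_\diamond(\Omega))$ for all $\norm{C} < c_A$. This exhibits $C \mapsto P_{A+C}(B)$ as an operator-valued power series in $C$ (each term $P_A(C)^n P_A(B)$ being a homogeneous polynomial of degree $n$ in $C$), hence analytic and in particular infinitely continuously Fréchet differentiable on $L^\infty_+(\Omega)$. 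Reading off the degree-one term identifies the first derivative as $D_A P_A(B;C) = P_A(C)P_A(B)$, as claimed. Keeping $B$ as a free variable throughout, the same series yields the corresponding statement for $P_A$ itself regarded as an element of $\mathscr{L}(L^\infty(\Omega;\mathbb{C}^{d\times d}),\mathscr{L}(H^1_\diamond(\Omega)))$.

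I expect the only point genuinely requiring care to be the first step: one must check that testing against $v \in H^1(\Omega)$ in \eqref{eq:defP} may be reduced to $v \in H^1_\diamond(\Omega)$ — harmless, since both forms annihilate constants — and that all three quantities appearing in the subtraction indeed lie in $H^1_\diamond(\Omega)$ so that Lax--Milgram uniqueness applies. Everything after the identity is algebra on a convergent series, and continuity of the derivative together with the higher derivatives is automatic from analyticity. For completeness I would note an equally short alternative: representing each form $\inner{\,\cdot\,,\cdot\,}_M$ by a bounded operator $L_M \in \mathscr{L}(H^1_\diamond(\Omega))$ through the Riesz map of the inner product inducing $\norm{\,\cdot\,}_*$, so that $M \mapsto L_M$ is linear with $\norm{L_M} \leq \norm{M}$ and $P_A(B) = -L_A^{-1}L_B$. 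Since $L_A$ is invertible for $A \in L^\infty_+(\Omega)$ and operator inversion is analytic, differentiating via $D(X^{-1}) = -X^{-1}(DX)X^{-1}$ gives $D_A P_A(B;C) = L_A^{-1}L_C L_A^{-1}L_B = P_A(C)P_A(B)$ directly.
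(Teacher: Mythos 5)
Your argument is correct, and its first half coincides exactly with the paper's: the resolvent-type identity $P_{A+C}(B)-P_A(B)=P_A(C)P_{A+C}(B)$ is precisely the paper's \eqref{eq:Pequality}, derived in the same way from linearity of $M\mapsto\inner{\,\cdot\,,\cdot\,}_M$ and the coercivity \eqref{eq:coercive1} (and your remark about reducing test functions from $H^1(\Omega)$ to $H^1_\diamond(\Omega)$ is the right point to flag). Where you diverge is in how the conclusion is extracted. The paper substitutes the identity into itself once, bounds the remainder $P_{A+C}(B)-P_A(B)-P_A(C)P_A(B)=P_A(C)(P_{A+C}(B)-P_A(B))$ by $\norm{B}\norm{C}^2/({c_A}^2c_{A+C})=o(\norm{C})\norm{B}$ to get the first derivative, and then invokes the product rule to conclude infinite differentiability. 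You instead invert $\ident-P_A(C)$ by a Neumann series to obtain $P_{A+C}(B)=\sum_{n\ge 0}P_A(C)^nP_A(B)$, read off analyticity of $C\mapsto P_{A+C}$ on the full ball $\norm{C}<c_A$, and extract the linear term as the derivative. Your route is slightly stronger and more self-contained: it delivers analyticity of $A\mapsto P_A$ with the sharp radius $c_A$ in one stroke, rather than leaving the higher derivatives to an appeal to the product rule; amusingly, it is the same Neumann-series device the paper reserves for $N$ in the proof of Theorem~\ref{thm:NDdiff}, applied one level earlier. The Riesz-map alternative $P_A(B)=-L_A^{-1}L_B$ you sketch at the end is also valid and is arguably the quickest way to see the formula $D_AP_A(B;C)=P_A(C)P_A(B)$, though it requires fixing the inner product on $H^1_\diamond(\Omega)$ inducing $\norm{\,\cdot\,}_*$, which the paper avoids.
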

	\begin{proof}
		Let $\norm{C}$ be small enough so that $A+C\in L^\infty_+(\Omega)$. According to the definitions of $P_{A+C}(B)$ and $P_A(B)$, we have
		\begin{equation*}
			\inner{P_{A+C}(B)y,v}_{A+C} = -\inner{y,v}_B = \inner{P_A(B)y,v}_A,\quad \forall v\in H^1(\Omega).
		\end{equation*}
		Restructuring and using the definition of $P_A(C)$ gives
		\begin{equation*}
			\big \langle (P_{A+C}(B) - P_A(B))y , v \big\rangle_A = -\inner{P_{A+C}(B)y,v}_C = \inner{P_A(C)P_{A+C}(B)y,v}_A
		\end{equation*}
		for all $y\in H^1_\diamond(\Omega)$, $v\in H^1(\Omega)$, and $B\in L^\infty(\Omega; \mathbb{C}^{d\times d})$. In consequence,
		\begin{equation}
			P_{A+C}(B) - P_A(B) = P_A(C)P_{A+C}(B), \label{eq:Pequality}
		\end{equation}
		and thus \eqref{eq:Pbound} implies
		\begin{equation}
			\norm{P_{A+C}(B) - P_A(B)}_{\mathscr{L}(H^1_\diamond(\Omega))} \leq \frac{\norm{B}\norm{C}}{c_A c_{A+C}}. \label{eq:Pbound2}
		\end{equation}
   		The assertion about the first derivative of $P_A(B)$ with respect to $A$ now follows by applying \eqref{eq:Pequality}, \eqref{eq:Pbound}, and \eqref{eq:Pbound2} to deduce
		\begin{align*}
			\norm{P_{A+C}(B) - P_A(B) - P_A(C)P_A(B)}_{\mathscr{L}(H^1_\diamond(\Omega))} &= \norm{P_A(C)(P_{A+C}(B)-P_A(B))}_{\mathscr{L}(H^1_\diamond(\Omega))} \\[1mm]
			&\leq \frac{\norm{B}\norm{C}^2}{{c_A}^2 c_{A+C}} = o(\norm{C})\norm{B}.
		\end{align*}
		Moreover, due to the norm of $B$ on the right, we also immediately obtain the differentiability of $P_A$ with respect to $A$. Indeed, taking the operator norm on $\mathscr{L}(L^\infty(\Omega;\mathbb{C}^{d\times d}),\mathscr{L}(H^1_\diamond(\Omega)))$ leads to
		\begin{align*}
		\norm{P_{A+C} - P_A - P_A(C)P_A(\,\cdot\,)}_{\mathscr{L}(L^\infty(\Omega;\mathbb{C}^{d\times d}),\mathscr{L}(H^1_\diamond(\Omega)))} = o(\norm{C}).
		\end{align*}
		Finally, the product rule guarantees that $A \mapsto P_A$ is in fact infinitely times continuously differentiable.
	\end{proof}
	\begin{remark}
		According to Lemma~\ref{lemma:perturbop}, the commutator of $P_A(B)$ and $P_A(C)$ satisfies
		\begin{equation*}
			P_A(B)P_A(C) - P_A(C)P_A(B) = D_AP_A(C;B) - D_AP_A(B;C).
		\end{equation*}
		The mappings $P_A(B)$ and $P_A(C)$ do not commute and $D_AP_A(C;B)$ and  $D_AP_A(B;C)$ do not coincide in general. To illuminate this by an example, let $\Omega$ be the unit ball in $\mathbb{R}^d$ and $\Gamma = \partial\Omega$ the unit sphere, let $I$ be the identity matrix and $J$ any permutation matrix in $\mathbb{R}^{d\times d}$, and finally let $y(x) = a\cdot x$ for $x\in\Omega$ and some $a\in\mathbb{R}^d$. Then $y\in H^1_\diamond(\Omega)$, and it follows straightforwardly that 
		\begin{equation*}
			P_I(J)y = -(Ja)\cdot x = -y\circ J^\textup{T}.
		\end{equation*}
		Hence, $P_I(J_1)P_I(J_2)y = y\circ (J_1 J_2)^\textup{T}$ for any two permutation matrices $J_1$ and $J_2$. In particular, $P_I(J_1)$ and $P_I(J_2)$ can obviously fail to commute.
	\end{remark}

	Let $\rho_k$ be the collection of all permutations of indices up to $k\in \mathbb{N}$, i.e.
	\begin{equation*}
		\rho_k = \{(\alpha_1, \ldots, \alpha_k) \mid \alpha_i \in \{1, \ldots, k\} \ \text{and} \ \alpha_i \neq \alpha_j \ \text{if} \ i \neq j\}.
	\end{equation*}
	
	\begin{theorem} \label{thm:NDdiff}
		The mappings $N$ and $\Lambda$ are infinitely times continuously Fr\'echet differentiable. Their derivatives at $A \in L^\infty_+(\Omega)$ are given by $D^k \! \Lambda(A) = TD^k \! N(A)$ and
		\begin{equation}
			D^k \! N(A; B_1, \ldots, B_k) = \Bigl[ \sum_{\alpha \in \rho_k} P_A(B_{\alpha_1})\cdots P_A(B_{\alpha_k}) \Bigr]N(A) \label{eq:Nderiv}
		\end{equation}
		for $k\in\mathbb{N}$ and $B_1, \dots, B_k \in L^\infty(\Omega; \mathbb{C}^{d\times d})$. These mappings are analytic with Taylor series
		\begin{align}
			N(A + B) &= \sum_{k = 0}^\infty \frac{1}{k!} D^k\! N(A; B, \ldots, B) = \sum_{k=0}^\infty P_A(B)^k N(A), \label{eq:Nseries} \\
			\Lambda(A + B) &= \sum_{k = 0}^\infty \frac{1}{k!} D^k\! \Lambda(A; B, \ldots, B) = T\sum_{k=0}^\infty P_A(B)^k N(A) \label{eq:Lseries}
		\end{align}
		for $A \in L^\infty_+(\Omega)$ and $B \in L^\infty(\Omega; \mathbb{C}^{d\times d})$ such that $\norm{B} < c_A$. 
	\end{theorem}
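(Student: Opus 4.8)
The plan is to reduce the entire statement to a single resolvent-type identity relating $N(A+B)$ to $N(A)$, and then to read off both the Taylor series and the explicit derivatives from the ensuing Neumann expansion. First I would derive the identity. Fix $A\in L^\infty_+(\Omega)$ and $B$ with $\norm{B}<c_A$, so that $A+B\in L^\infty_+(\Omega)$. For $f\in L^2_\diamond(\Gamma)$, the weak form \eqref{eq:weakform} applied to both coefficients gives $\inner{N(A+B)f,v}_{A+B}=\inner{f,Tv}_{L^2(\Gamma)}=\inner{N(A)f,v}_A$ for all $v\in H^1(\Omega)$. Splitting $\inner{\,\cdot\,,\,\cdot\,}_{A+B}=\inner{\,\cdot\,,\,\cdot\,}_A+\inner{\,\cdot\,,\,\cdot\,}_B$ and invoking the definition \eqref{eq:defP} of $P_A(B)$ with $y=N(A+B)f\in H^1_\diamond(\Omega)$, I obtain $\inner{(N(A+B)-N(A))f,v}_A=-\inner{N(A+B)f,v}_B=\inner{P_A(B)N(A+B)f,v}_A$ for all $v\in H^1(\Omega)$. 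Since both sides have their first $\inner{\,\cdot\,,\,\cdot\,}_A$-argument in $H^1_\diamond(\Omega)$, the coercivity \eqref{eq:coercive1} lets me drop the test function and conclude $(I-P_A(B))N(A+B)=N(A)$.

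Next I would exploit the bound \eqref{eq:Pbound}, which yields $\norm{P_A(B)}_{\mathscr{L}(H^1_\diamond(\Omega))}\le\norm{B}/c_A<1$, so that $I-P_A(B)$ is boundedly invertible with a convergent Neumann series and $N(A+B)=(I-P_A(B))^{-1}N(A)=\sum_{k=0}^\infty P_A(B)^kN(A)$, which is exactly \eqref{eq:Nseries}. Analyticity then follows structurally: $B\mapsto P_A(B)$ is bounded linear, $S\mapsto(I-S)^{-1}$ is analytic on the open unit ball of $\mathscr{L}(H^1_\diamond(\Omega))$, and right-composition with the fixed operator $N(A)$ is bounded linear, so $B\mapsto N(A+B)$ is analytic on $\{\norm{B}<c_A\}$; in particular $N$ is infinitely times continuously Fr\'echet differentiable. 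The corresponding claims for $\Lambda=TN$ follow by left-composing with the fixed bounded operator $T$, which gives $D^k\!\Lambda(A)=TD^kN(A)$ and \eqref{eq:Lseries}.

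Finally I would extract the explicit formula \eqref{eq:Nderiv}. By the uniqueness of the homogeneous expansion of an analytic Banach-space map, the degree-$k$ term of the series must equal $\tfrac{1}{k!}D^kN(A;B,\dots,B)$, so $P_A(B)^kN(A)=\tfrac{1}{k!}D^kN(A;B,\dots,B)$, and it remains to recover the symmetric $k$-linear map from its diagonal by polarization. Because $B\mapsto P_A(B)$ is linear, the $k$-linear map $(B_1,\dots,B_k)\mapsto P_A(B_1)\cdots P_A(B_k)N(A)$ restricts on the diagonal to $P_A(B)^kN(A)$; symmetrizing over all orderings produces $\tfrac{1}{k!}\sum_{\alpha\in\rho_k}P_A(B_{\alpha_1})\cdots P_A(B_{\alpha_k})N(A)$, whose diagonal value is again $P_A(B)^kN(A)$, and multiplying by $k!$ identifies $D^kN(A;B_1,\dots,B_k)$ with the bracketed sum in \eqref{eq:Nderiv}. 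The point demanding care is that the operators $P_A(B_j)$ do not commute, as the Remark following Lemma~\ref{lemma:perturbop} shows; this is precisely why the full permutation sum over $\rho_k$ with the ordering preserved inside each summand is needed, rather than a single ordered product with a multinomial weight. An alternative route to \eqref{eq:Nderiv} is induction on $k$, starting from $DN(A;C)=P_A(C)N(A)$ (the first-order part of the resolvent identity) and differentiating using $D_AP_A(B;C)=P_A(C)P_A(B)$ from Lemma~\ref{lemma:perturbop}, but the polarization argument is the most economical.
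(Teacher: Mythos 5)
Your proof is correct, and its core is identical to the paper's: you derive the resolvent-type identity $(\ident-P_A(B))N(A+B)=N(A)$ from the two weak formulations together with the definition of $P_A(B)$ and the coercivity estimate \eqref{eq:coercive1}, and then invert $\ident-P_A(B)$ by a Neumann series using \eqref{eq:Pbound} to obtain \eqref{eq:Nseries}; the statements for $\Lambda=TN$ follow by composing with the fixed bounded operator $T$. The one place where you genuinely diverge is the extraction of the explicit formula \eqref{eq:Nderiv}: the paper obtains it inductively from \eqref{eq:Nseries}, Lemma~\ref{lemma:perturbop}, and the product rule (deferring the details to \cite[Proof of Theorem~A.2]{Garde_2019c}), whereas you identify the degree-$k$ homogeneous term $P_A(B)^kN(A)$ with $\tfrac{1}{k!}D^k\!N(A;B,\dots,B)$ via uniqueness of the Taylor expansion and then recover the symmetric $k$-linear map by polarization, the symmetrisation over $\rho_k$ being exactly what is needed since the $P_A(B_j)$ do not commute. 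Your route is self-contained and arguably more economical, since it bypasses the repeated differentiation of $A\mapsto P_A$; the paper's inductive route has the advantage of producing the derivative formulas at every base point directly rather than only as Taylor coefficients at $A$, though in your argument this is recovered anyway because the expansion is valid around every $A\in L^\infty_+(\Omega)$. Your structural justification of analyticity (linearity of $B\mapsto P_A(B)$ composed with the analytic map $S\mapsto(\ident-S)^{-1}$) is also slightly more explicit than the paper's, which simply reads the analyticity off the convergent power series.
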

	\begin{proof}
		As in \cite[Proof of Theorem~A.2]{Garde_2019c}, it is sufficient to prove the expansion \eqref{eq:Nseries}. Note that \eqref{eq:Lseries} follows directly from \eqref{eq:Nseries} and the definition of $\Lambda$. 
		
		Let $A\in L^\infty_+(\Omega)$ and $B\in L^\infty(\Omega; \mathbb{C}^{d\times d})$, with $\norm{B}$ small enough so that also $A+B\in L^\infty_+(\Omega)$. The definitions of $N(A+B)$ and $N(A)$ yield
		\begin{equation}
			\inner{N(A+B)f,v}_{A+B} = \inner{f,Tv}_{L^2(\Gamma)} = \inner{N(A)f,v}_A \label{eq:Ntmp}
		\end{equation}
		for all $f\in L^2_\diamond(\Gamma)$ and $v\in H^1(\Omega)$. Combining the definition of $P_A(B)$ with \eqref{eq:Ntmp} gives
		\begin{align*}
			\inner{P_A(B)N(A+B)f,v}_A &= -\inner{N(A+B)f,v}_B \\[0.5mm]
			&= \inner{N(A+B)f,v}_{A+B} - \inner{N(A)f,v}_A -\inner{N(A+B)f,v}_B \\[0.5mm]
			&= \inner{(N(A+B)-N(A))f,v}_A
		\end{align*}
		for all $f\in L^2_\diamond(\Gamma)$ and $v\in H^1(\Omega)$. Due to \eqref{eq:coercive1}, we have actually proven the equality
		\begin{equation*}
			P_A(B)N(A+B) = N(A+B)-N(A),
		\end{equation*}
		which may also be written as
		\begin{equation*}
			(\ident-P_A(B))N(A+B) = N(A).
		\end{equation*}

                Assume that $\norm{B} < c_A$, which guarantees $A+B \in L^\infty_+(\Omega)$ as well as $\norm{P_A(B)}_{\mathscr{L}(H^1_\diamond(\Omega))} < 1$ by virtue of \eqref{eq:Pbound}. The latter allows inverting $\ident-P_A(B)$ via a Neumann series, which gives the sought-for expansion in \eqref{eq:Nseries}. The actual formulas for the derivatives of $N$ can then be deduced inductively from \eqref{eq:Nseries}, Lemma~\ref{lemma:perturbop}, and the product rule as in \cite[Proof of Theorem~A.2]{Garde_2019c}.
	\end{proof}
	
	If $A\in L^\infty_+(\Omega)$ is Hermitian, it is straightforward to derive a more standard formula for the first Fr\'echet derivative of $\Lambda$. Obviously, $D\!\Lambda(A;B)g = TP_A(B)u_g^A$, which by \eqref{eq:weakform} and the definition of $P_A(B)$ gives
	\begin{equation} \label{eq:DLambdasimple}
		\inner{D\!\Lambda(A;B)g,f}_{L^2(\Gamma)} = \inner{P_A(B)u_g^A, u_f^A}_A = -\inner{u_g^A,u_f^A}_B, \quad f,g\in L^2_\diamond(\Gamma).
	\end{equation}

    The following bounds are immediate consequences of \eqref{eq:ubound}, \eqref{eq:Pbound}, and \eqref{eq:Nderiv}. See Section~\ref{sec:notation} for the norm on bounded $k$-linear maps.
	
	\begin{corollary} \label{col:cm}
	Let $A\in L^\infty_+(\Omega)$ and $k\in\mathbb{N}$. Then,
	\begin{align*}
		\norm{ D^k\! N(A) }_{\mathscr{L}^k(L^\infty(\Omega; \mathbb{C}^{d\times d}), \mathscr{L}(L^2_\diamond(\Gamma),H^1_\diamond(\Omega)))} &\leq \frac{k!C_{\textup{T}}}{{c_A}^{k+1}}, \\[1mm]
		\norm{ D^k\! \Lambda(A) }_{\mathscr{L}^k(L^\infty(\Omega; \mathbb{C}^{d\times d}), \mathscr{L}(L^2_\diamond(\Gamma)))} &\leq \frac{k!{C_{\textup{T}}}^2}{{c_A}^{k+1}}.
	\end{align*}
	\end{corollary}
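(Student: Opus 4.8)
The plan is to bound the multilinear derivatives directly from the closed-form expression \eqref{eq:Nderiv}, feeding in the operator estimates \eqref{eq:ubound} and \eqref{eq:Pbound}. Recalling the definition of the $\mathscr{L}^k$-norm from Section~\ref{sec:notation}, it suffices to estimate $\norm{D^k\!N(A; B_1, \ldots, B_k)}_{\mathscr{L}(L^2_\diamond(\Gamma),H^1_\diamond(\Omega))}$ uniformly over directions satisfying $\norm{B_j} \leq 1$ for $j = 1, \ldots, k$.

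Fixing such directions, I would apply the triangle inequality to the sum over $\rho_k$ in \eqref{eq:Nderiv}, which reduces matters to bounding a single term $P_A(B_{\alpha_1})\cdots P_A(B_{\alpha_k})N(A)$. Its operator norm is at most the product of the factor norms. Since $\norm{B_j}\leq 1$, the bound \eqref{eq:Pbound} gives $\norm{P_A(B_j)}_{\mathscr{L}(H^1_\diamond(\Omega))} \leq 1/c_A$, while \eqref{eq:ubound} yields $\norm{N(A)}_{\mathscr{L}(L^2_\diamond(\Gamma),H^1_\diamond(\Omega))} \leq C_{\textup{T}}/c_A$. Hence each summand contributes at most $C_{\textup{T}}/{c_A}^{k+1}$. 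As $\rho_k$ has exactly $k!$ elements, the first claimed bound follows upon taking the supremum over the admissible directions.

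The bound for $\Lambda$ then follows from the relation $D^k\!\Lambda(A) = T D^k\!N(A)$ established in Theorem~\ref{thm:NDdiff}: composing with the trace operator on the left multiplies the estimate by $\norm{T}_{\mathscr{L}(H_\diamond^1(\Omega),L_\diamond^2(\Gamma))} = C_{\textup{T}}$, producing the extra factor ${C_{\textup{T}}}$ and hence the exponent $2$. I expect no genuine obstacle here; the only points requiring attention are the bookkeeping—counting $N(A)$ once in \emph{every} summand rather than once overall, and using the linear dependence on each $B_j$ in \eqref{eq:Pbound} to reduce to unit directions—together with the combinatorial fact that $\abs{\rho_k} = k!$, which supplies the prefactor.
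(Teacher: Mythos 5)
Your argument is correct and is exactly the one the paper intends: the corollary is stated as an immediate consequence of \eqref{eq:ubound}, \eqref{eq:Pbound}, and \eqref{eq:Nderiv}, and your write-up simply makes explicit the triangle inequality over the $k!$ permutations, the per-factor bounds, and the composition with $T$. No gaps.
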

	
	\begin{remark}
          \label{rm:quotient}
	  The results presented in this section could also be formulated with some other, closely related, spaces acting as the domains and/or codomains for the introduced operators. For instance, the operator $P_A(B)$ could be defined on $H^1(\Omega)/\mathbb{C}$, if that same space were also used as the codomain of $N(A)$. In a similar fashion, the codomain of $\Lambda(A)$ could be changed to $L^2(\Gamma)/\mathbb{C}$. In general, the use of such quotient spaces would explicitly emphasise the freedom in the choice of the ground level of potential, whereas the chosen framework of $\Gamma$-mean free spaces corresponds to a certain systematic way of selecting the ground level.
		
	  If $\Gamma = \partial\Omega$, one may naturally interpret $\Lambda(A) : H^{-1/2}_\diamond(\partial\Omega) \to H^{1/2}_\diamond(\partial\Omega)$, with
      \begin{equation} \label{eq:Hsdiamond}
      	H^{s}_\diamond(\partial\Omega) = \{ g \in H^{s}_\diamond(\partial\Omega) \mid \inner{1,g}_{\partial \Omega} = 0 \}, \quad -\tfrac{1}{2} \leq s \leq \tfrac{1}{2},
      \end{equation}
      where $\inner{\,\cdot\, , \,\cdot\,}_{\partial \Omega}: H^{-s}(\partial\Omega) \times H^{s}(\partial\Omega)\to\C$ denotes the sesquilinear dual bracket on $\partial \Omega$. The above results still hold in such a setting if $H^{-1/2}_\diamond(\partial\Omega)$ is selected as the domain of $N(A)$  and  the codomain of $T$ is interpreted as $H^{1/2}(\partial \Omega)$. What is more, the codomain of $\Lambda(A)$ may be chosen as $H^{1/2}(\partial\Omega)/\mathbb{C}$, if specifying the ground level of potential is for some reason unwanted. 
      
      One could also make more exact use of the Sobolev scale in the partial data setting of $\Gamma \not= \partial \Omega$ by resorting to suitable standard variants of $H^{\pm 1/2}(\Gamma)$; see,~e.g.,~\cite{Fernandes1997} for more information. However, we do not stress this matter any further in this work.       
	\end{remark}
	
	\section{Smoothened complete electrode model} \label{sec:SCEM}
	
	In this section we consider the SCEM~\cite{Hyvonen2017} that is a generalisation of the standard CEM~\cite{Cheng89}. Although \cite{Hyvonen2017} shows existence and uniqueness for the SCEM, we briefly elaborate on these matters in the following since the coercivity constant associated with the weak formulation of the SCEM relates to the radius of convergence for the presented Taylor series. The motivation behind our choice of norms stems from maximising the radius of convergence.
	
	Let $E_j \subset \partial\Omega$, $j = 1,\dots,m$, be nonempty, connected, relatively open, and such that their closures are mutually disjoint. The surface patch $E_j$ corresponds to the location of the $j$'th electrode in a practical setting. Denote $E = \cup_{j=1}^m E_j$ and assume that the contact admittance $\zeta\in L^\infty(\partial \Omega)$ satisfies
	\begin{equation*} 
		\redel(\zeta) \geq 0 \quad\text{and}\quad \zeta|_{\partial\Omega\setminus \overline{E}} \equiv 0.
	\end{equation*}
	In order to enable current flow through all electrodes, we assume there exist $c_\zeta>0$ and nonempty open subsets $\mathcal{E}_j\subset E_j$, $j=1,\dots,m$, such that
	\begin{equation*}
		\redel(\zeta) \geq c_\zeta \quad \text{a.e.\ on } \mathcal{E} = \cup_{j=1}^m \mathcal{E}_j.
	\end{equation*}
    We define
	\begin{equation*}
		\inner{w,v}_\zeta = \int_{\partial\Omega} \zeta w\overline{v}\,\di S,\quad v,w\in L^2(\partial\Omega),
	\end{equation*}
    in anticipation of it becoming a part of the sesquilinear form for the SCEM. In our analysis, it is assumed that $\{E_j\}_{j=1}^m$ and $\zeta$ are known, that is, the conductivity coefficient is the only unknown in the considered inverse problem.
    
	Define $\mathbf{1} = [1,\dots,1]\in\mathbb{C}^m$ and consider mean free electrode current patterns on the hyperplane
	\begin{equation*}
		\mathbb{C}_\diamond^m = \{ W \in \mathbb{C}^m \mid W\cdot\mathbf{1} = 0 \}.
	\end{equation*}
	%
        %Assuming that $\{E_j\}_{j=1}^m$ and $\zeta$ are known,
        The SCEM states that the pair $(u,U)\in H^1(\Omega)\oplus \mathbb{C}^m$, consisting of the electric potential in $\Omega$ and those on the electrodes, satisfies
	\begin{alignat*}{2}
		-\nabla\cdot(A\nabla u) &= 0& &\text{in } \Omega, \\[1mm]
		\nu\cdot(A\nabla u) &= \zeta(U-u)& &\text{on } \partial\Omega, \\[-1mm]
		\int_{E_j}\nu\cdot(A\nabla u)\,\di S &= I_j,& \quad&j = 1,\dots,m,
	\end{alignat*}
        for a conductivity coefficient $A\in L_+^\infty(\Omega)$, an electrode current pattern $I\in\mathbb{C}_\diamond^m$, and a contact admittance $\zeta$ with the above listed properties.
	Here the vector of electrode potentials $U \in \C^m$ is identified with the piecewise constant function
	\begin{equation} \label{eq:pwcfun}
		\sum_{j=1}^m U_j \chi_{E_j} \quad \text{on } \partial \Omega,
	\end{equation}
	where $\chi_{E_j}$ is the characteristic function of $E_j$.
	
	Let $\mathcal{H} = (H^1(\Omega)\oplus \mathbb{C}^m)/\mathbb{C}$ denote the Hilbert space whose elements are equivalence classes of elements in $H^1(\Omega)\oplus \mathbb{C}^m$, with the associated equivalence relation
	\begin{equation*}
		(v,V) \sim (w,W) \quad\Longleftrightarrow\quad \exists c\in\mathbb{C} : \ v-w \equiv c \enskip\wedge\enskip V-W = c\mathbf{1}.
	\end{equation*}
	We equip $\mathcal{H}$ with the norm
	\begin{equation} \label{eq:Hnorm}
		\norm{(v,V)}_{\mathcal{H}}^2 = \norm{v}^2_{*} + \norm{\widetilde{T}v-V}_{L^2(\mathcal{E})}^2,
	\end{equation}
	where $V$ is  once again identified with a piecewise constant function as in \eqref{eq:pwcfun}. Here $\widetilde{T}: H^1(\Omega) \to H^{1/2}(\partial \Omega) \subset L^2(\partial \Omega)$ is the Dirichlet trace operator onto $\partial\Omega$. It should be noted that \eqref{eq:Hnorm} is equivalent to the more standard quotient norm of $\mathcal{H}$ introduced in~\cite{Hyvonen2017,Somersalo1992}:
	\begin{equation*} 
		\norm{(v,V)}_{\widetilde{\mathcal{H}}}^2 = \inf_{c\in\mathbb{C}}\bigl(\norm{v-c}^2_{H^1(\Omega)} + \abs{V-c\mathbf{1}}^2\bigr).
	\end{equation*}
	Indeed, the inequality $\norm{(v,V)}_{\widetilde{\mathcal{H}}}\leq C_{\mathcal{H}}\norm{(v,V)}_{\mathcal{H}}$ is proven in \cite[Proof of Lemma~2.1]{Hyvonen2017}, while the other direction $\norm{(v,V)}_{\mathcal{H}}\leq C_{\widetilde{\mathcal{H}}}\norm{(v,V)}_{\widetilde{\mathcal{H}}}$ follows via a similar line of reasoning as \cite[Lemma~3.2]{Somersalo1992}; such an estimate also holds if $\mathcal{E}$ in \eqref{eq:Hnorm} is replaced by $\partial\Omega$, which is relevant for proving the continuity of the sesquilinear form below. See also \cite[Lemma~2.5]{Hyvonen2004}.
	
	The weak form of the SCEM, as given in \cite{Hyvonen2017}, is
	\begin{equation} \label{eq:weakCEM}
		a_A[(u,U),(v,V)] = I\cdot \overline{V}, \quad \forall (v,V)\in \mathcal{H},
	\end{equation}
	where the sesquilinear form on the left is defined by
	\begin{equation}
		a_A[(w,W),(v,V)] = \inner{w,v}_A + \inner{\widetilde{T}w-W,\widetilde{T}v-V}_\zeta,\quad (w,W),(v,V)\in\mathcal{H}. \label{eq:SCEMform}
	\end{equation} 
	The following continuity and coercivity estimates can be deduced by comparing \eqref{eq:Hnorm} and \eqref{eq:SCEMform}: 
	\begin{align}
		\abs{a_A[(w,W),(v,V)]} &\leq (\norm{A}+C\norm{\zeta}_{L^\infty(\partial\Omega)})\norm{(w,W)}_\mathcal{H}\norm{(v,V)}_\mathcal{H}, \notag\\[1mm]
		\abs{a_A[(v,V),(v,V)]} &\geq \redel(a_A[(v,V),(v,V)]) \geq \min\{c_A,c_\zeta\}\norm{(v,V)}_\mathcal{H}^2. \label{eq:coercive2}
	\end{align}
	On the other hand, for any $I\in\mathbb{C}_\diamond^m$ we have 
	\begin{equation} \label{eq:LM_SCEM_RHS}
		\abs{I\cdot \overline{V}} = \inf_{c\in\mathbb{C}}\abs{I\cdot \overline{(V-c\mathbf{1})}} \leq \abs{I}\inf_{c\in\mathbb{C}}\abs{V-c\mathbf{1}} \leq C_{\mathcal{H}}\abs{I}\norm{(v,V)}_{\mathcal{H}}, \quad (v,V)\in\mathcal{H}.
	\end{equation}
	Hence, the Lax--Milgram lemma guarantees \eqref{eq:weakCEM} has a unique solution $(u,U)\in\mathcal{H}$ that satisfies the bound
	\begin{equation} \label{eq:ubound2}
		\norm{(u,U)}_{\mathcal{H}} \leq \frac{C_{\mathcal{H}}\abs{I}}{\min\{c_A,c_\zeta\}}.
	\end{equation}
	We occasionally write $(u, U) = (u_I^A,U_I^A)$ for the unique solution of \eqref{eq:weakCEM} to be more specific about its connection to $I$ and $A$. 
	
	\begin{remark}
		If $z_j\in\mathbb{C}$ with $\redel(z_j)>0$, $j=1,\dots,m$, are constant contact impedances on the electrodes, then the standard CEM is obtained from the SCEM by setting $\zeta|_{E_j} \equiv z_j^{-1}$. In this case, $c_\zeta = \min_j [\redel(z_j) / \abs{z_j}^2]$ and $\mathcal{E} = E$.
	\end{remark}
	
	We denote by $N_{\textup{E}} : L^\infty_+(\Omega) \to \mathscr{L}(\mathbb{C}_\diamond^m,\mathcal{H})$ the map that sends a conductivity coefficient to the corresponding solution operator of the SCEM, i.e.\ $N_{\textup{E}}(A)I = (u_I^A,U_I^A)$. Moreover, we use $T_1 \in\mathscr{L}(\mathcal{H}, H^1(\Omega))$ and $T_2\in\mathscr{L}(\mathcal{H},\mathbb{C}_\diamond^m)$ to extract components from $(v,V)\in\mathcal{H}$. To be more precise, $T_1(v,V) = \tilde{v}$ and $T_2(v,V) = \widetilde{V}$, where $(\tilde{v},\widetilde{V})\in H^1(\Omega)\oplus \mathbb{C}^m$ is the unique element in the equivalence class $(v,V)\in\mathcal{H}$ with a mean free second component $\widetilde{V}\in\mathbb{C}_\diamond^m$. Note that forcing the electrode potential to be mean free corresponds to a systematic way of choosing the ground level of potential.
	
	Let $(v,V)\in\mathcal{H}$ be arbitrary. Due to orthogonality, $\abs{\widetilde{V}-c\mathbf{1}}^2 = \abs{\widetilde{V}}^2+d\abs{c}^2\geq d\abs{c}^2$ for $\widetilde{V} = T_2(v,V)$.  Hence, $\tilde{v} = T_1(v,V)$ satisfies
	\begin{align*}
		\norm{\tilde{v}}_{H^1(\Omega)} &= \inf_{c\in\mathbb{C}}\norm{\tilde{v}-c+c}_{H^1(\Omega)} \leq \sqrt{2}\inf_{c\in\mathbb{C}}\left( \norm{\tilde{v}-c}_{H^1(\Omega)}^2+\abs{c}^2\abs{\Omega} \right)^{1/2} \\
		&\leq C_\Omega\norm{(\tilde{v},\widetilde{V})}_{\widetilde{\mathcal{H}}} = C_\Omega\norm{(v,V)}_{\widetilde{\mathcal{H}}} \leq C_\mathcal{H}C_\Omega \norm{(v,V)}_{\mathcal{H}}, 
	\end{align*}
	with $C_\Omega = \sqrt{2\max\{1,\abs{\Omega}/d\}}$, where $\abs{\Omega}$ is the domain's Lebesgue measure. Considering also \eqref{eq:LM_SCEM_RHS} with $I = \widetilde{V}$, we have altogether established that
	\begin{equation}
		\norm{T_1}_{\mathscr{L}(\mathcal{H},H^1(\Omega))} \leq C_\mathcal{H}C_\Omega, \qquad \norm{T_2}_{\mathscr{L}(\mathcal{H},\mathbb{C}_\diamond^m)} \leq C_\mathcal{H}, \label{eq:normT}
	\end{equation} 
	where $\mathbb{C}_\diamond^m$ is equipped with the Euclidean norm.
	
	As for the CM, we introduce a few  mappings related to the SCEM:
	\begin{enumerate}[(i)]
		\item $F_{\textup{E}} : L^\infty_+(\Omega) \to \mathscr{L}(\mathbb{C}_\diamond^m,H^1(\Omega))$ defined by $F_{\textup{E}}(A)I = T_1 N_{\textup{E}}(A)I = \tilde{u}_I^A$.
		\item $\Lambda_{\textup{E}} : L^\infty_+(\Omega) \to \mathscr{L}(\mathbb{C}_\diamond^m)$ defined by $\Lambda_{\textup{E}}(A)I = T_2 N_{\textup{E}}(A)I = \widetilde{U}_I^A$.
		\item $P_{{\textup{E}},A} \in \mathscr{L}(L^\infty(\Omega;\mathbb{C}^{d\times d}),\mathscr{L}(\mathcal{H}))$ for $A\in L^\infty_+(\Omega)$ defined by
		\begin{equation} \label{eq:SCEM_P}
			a_A[P_{{\textup{E}},A}(B)(y,Y),(v,V)] = -\inner{y,v}_B, \quad \forall (v,V)\in \mathcal{H},
		\end{equation}
		where $B \in L^\infty(\Omega, \C^{d \times d})$ and $(y,Y) \in \mathcal{H}$.
	\end{enumerate}
	As in the case of the CM, $P_{{\textup{E}},A}$ is well-defined due to the Lax--Milgram lemma that guarantees the unique solvability of the variational problem \eqref{eq:SCEM_P} and also yields the estimate
	\begin{equation}
		\norm{P_{{\textup{E},A}}(B)}_{\mathscr{L}(\mathcal{H})} \leq \frac{\norm{B}}{\min\{c_A,c_\zeta\}}. \label{eq:PEbound}
	\end{equation}
	The nonlinear map $\Lambda_{\textup{E}}$ is called the forward map of the SCEM; it maps a given conductivity coefficient $A$ to the corresponding electrode current-to-voltage operator $\Lambda_{\textup{E}}(A)$.
	
	\section{Taylor series for the SCEM}
        \label{sec:taylor_SCEM}
	
	We obtain analogous differentiability and analyticity results for the SCEM as for the CM in Section~\ref{sec:taylor}. The fundamental ideas of the proofs are also the same, with the main difference being the employment of a different coercive sesquilinear form.
	\begin{lemma}
		\label{lemma:perturbop2}
		$P_{\textup{E},A}(B)$, and more generally $P_{\textup{E},A}$, is infinitely times continuously Fr\'echet differentiable with respect to $A$. Its first derivative $D_A P_{\textup{E},A}(B)$ is given by
		\begin{equation*}
			D_A P_{\textup{E},A}(B;C) = P_{\textup{E},A}(C)P_{\textup{E},A}(B)
		\end{equation*}
		for $A \in L^\infty_+(\Omega)$ and $B, C \in L^\infty(\Omega; \mathbb{C}^{d\times d})$.
	\end{lemma}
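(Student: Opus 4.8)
The plan is to follow the proof of Lemma~\ref{lemma:perturbop} essentially verbatim, substituting the coercive form $a_A$ of the SCEM for $\inner{\,\cdot\,,\,\cdot\,}_A$. The single structural feature that makes this transfer work is that the dependence of $a_A$ on the conductivity is confined to its first summand: comparing \eqref{eq:SCEMform} for the coefficients $A+C$ and $A$, and using that $\inner{\,\cdot\,,\,\cdot\,}_B$ is linear in $B$, gives
\begin{equation*}
a_{A+C}[(w,W),(v,V)] = a_A[(w,W),(v,V)] + \inner{w,v}_C, \quad (w,W),(v,V)\in\mathcal{H},
\end{equation*}
since the contact term $\inner{\widetilde{T}w-W,\widetilde{T}v-V}_\zeta$ is independent of $A$.

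First I would write the defining relations \eqref{eq:SCEM_P} for $P_{\textup{E},A+C}(B)$ and $P_{\textup{E},A}(B)$, both of which share the same right-hand side $-\inner{y,v}_B$, and subtract them after expanding $a_{A+C}$ as above. Writing $w = T_1 P_{\textup{E},A+C}(B)(y,Y)$ for the $H^1$-component of the perturbed operator, this yields
\begin{equation*}
a_A\bigl[(P_{\textup{E},A+C}(B)-P_{\textup{E},A}(B))(y,Y),(v,V)\bigr] = -\inner{w,v}_C
\end{equation*}
for all $(y,Y),(v,V)\in\mathcal{H}$. The right-hand side is precisely the defining relation \eqref{eq:SCEM_P} for $P_{\textup{E},A}(C)$ applied to the element $P_{\textup{E},A+C}(B)(y,Y)\in\mathcal{H}$, whose first component is $w$. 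Coercivity \eqref{eq:coercive2} guarantees that an element of $\mathcal{H}$ is determined by its pairings against $a_A$, so I obtain the operator identity
\begin{equation*}
P_{\textup{E},A+C}(B)-P_{\textup{E},A}(B) = P_{\textup{E},A}(C)P_{\textup{E},A+C}(B),
\end{equation*}
the precise analogue of \eqref{eq:Pequality}.

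The remainder is routine. Applying \eqref{eq:PEbound} to this identity gives the Lipschitz bound
\begin{equation*}
\norm{P_{\textup{E},A+C}(B)-P_{\textup{E},A}(B)}_{\mathscr{L}(\mathcal{H})} \leq \frac{\norm{B}\norm{C}}{\min\{c_A,c_\zeta\}\min\{c_{A+C},c_\zeta\}},
\end{equation*}
the counterpart of \eqref{eq:Pbound2}. Substituting the identity once more into
\begin{equation*}
P_{\textup{E},A+C}(B)-P_{\textup{E},A}(B)-P_{\textup{E},A}(C)P_{\textup{E},A}(B) = P_{\textup{E},A}(C)\bigl(P_{\textup{E},A+C}(B)-P_{\textup{E},A}(B)\bigr)
\end{equation*}
and bounding with \eqref{eq:PEbound} together with the previous display produces a term of order $o(\norm{C})\norm{B}$. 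This identifies $D_A P_{\textup{E},A}(B;C) = P_{\textup{E},A}(C)P_{\textup{E},A}(B)$ and, through the explicit factor $\norm{B}$ on the right, the Fr\'echet differentiability of $A\mapsto P_{\textup{E},A}$; infinite differentiability then follows from the product rule exactly as in the CM case.

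The only point requiring genuine care is the matching of the perturbation term $-\inner{w,v}_C$ against the definition of $P_{\textup{E},A}(C)$. This succeeds precisely because both the right-hand side of \eqref{eq:SCEM_P} and the $A$-dependent part of $a_A$ depend only on the $H^1$-components of their arguments, so the correction incurred by changing $A$ lands in exactly the form $-\inner{w,v}_C$ and composes cleanly with $P_{\textup{E},A}(C)$. Had the conductivity entered the contact term as well, the subtracted expression would not have this form and the clean composition would fail; everything else is a transcription of the CM proof with $c_A$ replaced by $\min\{c_A,c_\zeta\}$.
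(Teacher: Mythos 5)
Your proposal is correct and follows essentially the same route as the paper: derive the key identity $P_{\textup{E},A+C}(B)-P_{\textup{E},A}(B) = P_{\textup{E},A}(C)P_{\textup{E},A+C}(B)$ from the defining variational relations and coercivity, then repeat the CM argument with $\min\{c_A,c_\zeta\}$ in place of $c_A$. The paper delegates the latter half to ``mimicking Lemma~\ref{lemma:perturbop}''; you simply write out those steps explicitly, and correctly.
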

	\begin{proof}
		Let $\norm{C}$ be small enough so that $A+C\in L^\infty_+(\Omega)$. For a given $(y,Y)\in\mathcal{H}$, we let $(w_{A+C},W_{A+C}) = P_{\textup{E},A+C}(B)(y,Y)$ and $(w_{A},W_{A}) = P_{\textup{E},A}(B)(y,Y)$. According to the definitions of $P_{\textup{E},A+C}(B)$ and $P_{\textup{E},A}(B)$, we have
		\begin{equation*}
			a_{A+C}[(w_{A+C},W_{A+C}),(v,V)] = -\inner{y,v}_B = a_{A}[(w_{A},W_{A}),(v,V)],\quad \forall (v,V)\in \mathcal{H}.
		\end{equation*}
		Restructuring gives
		\begin{equation*}
			a_{A}[(w_{A+C}-w_A,W_{A+C}-W_A),(v,V)] = -\inner{w_{A+C},v}_C,
		\end{equation*}
		for all $(y,Y),(v,V)\in \mathcal{H}$ and $B\in L^\infty(\Omega; \mathbb{C}^{d\times d})$. Using the definition of $P_{\textup{E},A}(C)$, we deduce that $(w_{A+C}-w_A,W_{A+C}-W_A)=P_{\textup{E},A}(C)(w_{A+C},W_{A+C})$,~i.e.
		\begin{equation}
			P_{\textup{E},A+C}(B) - P_{\textup{E},A}(B) = P_{\textup{E},A}(C)P_{\textup{E},A+C}(B). \label{eq:Pequality2}
		\end{equation}
		The proof can now be completed by mimicking that of Lemma~\ref{lemma:perturbop}, with \eqref{eq:Pequality2} and \eqref{eq:PEbound} taking the roles of \eqref{eq:Pequality} and \eqref{eq:Pbound}, respectively.
	\end{proof}
	
	This leads to the following result on Taylor series representations.
	
	\begin{theorem} \label{thm:NDdiff2}
        The mappings $N_{\textup{E}}$, $F_{\textup{E}}$, and $\Lambda_{\textup{E}}$ are infinitely times continuously Fr\'echet differentiable. Their derivatives at $A \in L^\infty_+(\Omega)$ are given by $D^k \! F_{\textup{E}}(A) = T_1D^k \! N_{\textup{E}}(A)$, $D^k \! \Lambda_{\textup{E}}(A) = T_2D^k \! N_{\textup{E}}(A)$, and
		\begin{equation}
			D^k \! N_{\textup{E}}(A; B_1, \ldots, B_k) = \Bigl[ \sum_{\alpha \in \rho_k} P_{\textup{E},A}(B_{\alpha_1})\cdots P_{\textup{E},A}(B_{\alpha_k}) \Bigr]N_\textup{E}(A) \label{eq:Nderiv2}
		\end{equation}
		for $k\in\mathbb{N}$ and $B_1, \dots, B_k \in L^\infty(\Omega; \mathbb{C}^{d\times d})$. These mappings are analytic with Taylor series
		\begin{align}
			N_{\textup{E}}(A + B) &= \sum_{k = 0}^\infty \frac{1}{k!} D^k\! N_{\textup{E}}(A; B, \ldots, B) = \sum_{k=0}^\infty P_{\textup{E},A}(B)^k N_{\textup{E}}(A), \label{eq:Nseries2} \\
			F_{\textup{E}}(A + B) &= \sum_{k = 0}^\infty \frac{1}{k!} D^k\! F_{\textup{E}}(A; B, \ldots, B) = T_1\sum_{k=0}^\infty P_{\textup{E},A}(B)^k N_{\textup{E}}(A), \notag \\
			\Lambda_{\textup{E}}(A + B) &= \sum_{k = 0}^\infty \frac{1}{k!} D^k\! \Lambda_{\textup{E}}(A; B, \ldots, B) = T_2\sum_{k=0}^\infty P_{\textup{E},A}(B)^k N_{\textup{E}}(A) \notag
		\end{align}
		for $A\in L^\infty_+(\Omega)$ and $B \in L^\infty(\Omega; \mathbb{C}^{d\times d})$ such that $\norm{B} < \min\{c_A,c_\zeta\}$. 
	\end{theorem}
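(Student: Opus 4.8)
The plan is to mirror the proof of Theorem~\ref{thm:NDdiff} essentially line by line, with the coercive sesquilinear form $a_A$ of the SCEM playing the role of $\inner{\,\cdot\,,\,\cdot\,}_A$. As in that argument, it suffices to establish the Neumann-series expansion \eqref{eq:Nseries2} for $N_{\textup{E}}$: the series for $F_{\textup{E}}$ and $\Lambda_{\textup{E}}$ then follow by applying the bounded operators $T_1$ and $T_2$ termwise, and the identities $D^k\!F_{\textup{E}}(A) = T_1 D^k\!N_{\textup{E}}(A)$ and $D^k\!\Lambda_{\textup{E}}(A) = T_2 D^k\!N_{\textup{E}}(A)$ are inherited from the corresponding statement for $N_{\textup{E}}$ by linearity and boundedness of $T_1$ and $T_2$.

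The key step I would carry out first is the operator identity
\begin{equation*}
	(\ident - P_{\textup{E},A}(B))N_{\textup{E}}(A+B) = N_{\textup{E}}(A).
\end{equation*}
To obtain it, fix a current pattern $I\in\mathbb{C}_\diamond^m$ and subtract the weak formulations \eqref{eq:weakCEM} for the coefficients $A+B$ and $A$, both of which have the same right-hand side $I\cdot\overline{V}$. The crucial observation is that, by \eqref{eq:SCEMform}, the contact term $\inner{\widetilde{T}w-W,\widetilde{T}v-V}_\zeta$ does not depend on $A$, so the difference $a_{A+B}-a_A$ reduces to the domain form $\inner{\,\cdot\,,\,\cdot\,}_B$ acting on the $H^1(\Omega)$-components alone. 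Inserting $N_{\textup{E}}(A+B)I$ into the first argument and comparing the resulting right-hand side with the defining relation \eqref{eq:SCEM_P} for $P_{\textup{E},A}(B)$ evaluated at $(y,Y) = N_{\textup{E}}(A+B)I$ yields
\begin{equation*}
	a_A\bigl[(N_{\textup{E}}(A)-N_{\textup{E}}(A+B))I,(v,V)\bigr] = -a_A\bigl[P_{\textup{E},A}(B)N_{\textup{E}}(A+B)I,(v,V)\bigr]
\end{equation*}
for all $(v,V)\in\mathcal{H}$, whence the identity follows from the non-degeneracy implied by the coercivity \eqref{eq:coercive2}.

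With the identity in hand, the expansion is immediate: for $\norm{B} < \min\{c_A,c_\zeta\}$ the bound \eqref{eq:PEbound} gives $\norm{P_{\textup{E},A}(B)}_{\mathscr{L}(\mathcal{H})} < 1$, so that $\ident - P_{\textup{E},A}(B)$ is boundedly invertible through the Neumann series $\sum_{k=0}^\infty P_{\textup{E},A}(B)^k$, and applying this inverse to $N_{\textup{E}}(A)$ produces \eqref{eq:Nseries2}. The explicit derivative formula \eqref{eq:Nderiv2}, together with the smoothness and analyticity statements, then follows inductively from the geometric series, Lemma~\ref{lemma:perturbop2}, and the product rule, exactly as in the proof of Theorem~\ref{thm:NDdiff}; the permutation sum over $\rho_k$ in \eqref{eq:Nderiv2} arises from symmetrising the $k$-fold product $P_{\textup{E},A}(B)^k$ into the multilinear form $D^k\!N_{\textup{E}}(A;B_1,\dots,B_k)$, consistent with the factor $\tfrac{1}{k!}$ appearing in the Taylor series.

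Since the argument is a faithful transcription of the CM case, I do not anticipate a genuine obstacle. The one point requiring care is the bookkeeping of the coercivity constant: because coercivity of $a_A$ in \eqref{eq:coercive2} is governed by $\min\{c_A,c_\zeta\}$ rather than by $c_A$ alone, the radius of convergence of the Taylor series is correspondingly $\min\{c_A,c_\zeta\}$. It is precisely the $A$-independence of the contact term that keeps $P_{\textup{E},A}(B)$ defined through the single domain form $\inner{\,\cdot\,,\,\cdot\,}_B$ in \eqref{eq:SCEM_P} and thereby makes the transcription of the CM proof exact.
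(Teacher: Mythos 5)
Your proposal is correct and follows essentially the same route as the paper's proof: establish the identity $(\ident-P_{\textup{E},A}(B))N_{\textup{E}}(A+B)=N_{\textup{E}}(A)$ by subtracting the weak formulations and invoking the coercivity of $a_A$, invert via the Neumann series using \eqref{eq:PEbound}, and obtain the derivative formulas from Lemma~\ref{lemma:perturbop2} and the product rule. Your explicit observation that the contact term in $a_A$ is $A$-independent, so that $a_{A+B}-a_A$ reduces to $\inner{\,\cdot\,,\,\cdot\,}_B$, is exactly the mechanism the paper uses implicitly in deriving \eqref{eq:Ntmp2}.
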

	\begin{proof}
		It is sufficient to prove the expansion \eqref{eq:Nseries2}. The specific formulas for the derivatives can then be derived from the product formula and Lemma~\ref{lemma:perturbop2}. 
		
		Let $A\in L^\infty_+(\Omega)$ and $B\in L^\infty(\Omega; \mathbb{C}^{d\times d})$, with $\norm{B}$ small enough so that also $A+B\in L^\infty_+(\Omega)$. For a given $I \in \C_\diamond^m$, we let $(w_{A+B},W_{A+B}) = N_{\textup{E}}(A+B)I$ and $(w_A,W_A) = N_{\textup{E}}(A)I$. The definitions of $N_{\textup{E}}(A+B)$ and $N_{\textup{E}}(A)$ yield
		\begin{equation*}
			a_{A+B}[(w_{A+B},W_{A+B}),(v,V)] = I\cdot \overline{V} = a_{A}[(w_{A},W_{A}),(v,V)],
		\end{equation*}
		or equivalently,
		\begin{equation}
			-\inner{w_{A+B},v}_{B} = a_{A}[(w_{A+B}-w_A,W_{A+B}-W_A),(v,V)] \label{eq:Ntmp2}
		\end{equation}
		for all $I\in\mathbb{C}_\diamond^m$ and $(v,V)\in \mathcal{H}$. Combining the definition of $(w,W) = P_{\textup{E},A}(B)(w_{A+B},W_{A+B})$ with \eqref{eq:Ntmp2} gives
		\begin{equation*}
			a_{A}[(w,W),(v,V)] = -\inner{w_{A+B},v}_{B} = a_{A}[(w_{A+B}-w_A,W_{A+B}-W_A),(v,V)]
		\end{equation*}
		for all $I\in\mathbb{C}_\diamond^m$ and $(v,V)\in \mathcal{H}$. Due to \eqref{eq:coercive2}, we have actually shown that $(w,W) = (w_{A+B}-w_A,W_{A+B}-W_A)$ for all $I \in \C_\diamond^m$, i.e.\
		\begin{equation*}
			P_{\textup{E},A}(B)N_{\textup{E}}(A+B) = N_\textup{E}(A+B)-N_\textup{E}(A),
		\end{equation*}
		which may be rewritten as
		\begin{equation*}
			(\ident-P_{\textup{E},A}(B))N_\textup{E}(A+B) = N_\textup{E}(A).
		\end{equation*}

        Assume that $\norm{B} < \min\{c_A,c_\zeta\}$, which guarantees $A+B\in L^\infty(\Omega; \mathbb{C}^{d\times d})$ and $\norm{P_{\textup{E},A}(B)}_{\mathscr{L}(\mathcal{H})} < 1$ by virtue of \eqref{eq:PEbound}. The latter allows inverting $\ident-P_{\textup{E},A}(B)$ via a Neumann series, which gives the sought-for expansion in \eqref{eq:Nseries2}. 
	\end{proof}
	As in Section~\ref{sec:taylor}, we can easily derive a more standard formula for the first Fr\'echet derivative of $\Lambda_{\textup{E}}$ if $A\in L^\infty_+(\Omega)$ is Hermitian and $\zeta$ is real-valued, making $a_A$ a symmetric sesquilinear form. Obviously, $D\!\Lambda_{\textup{E}}(A;B)J = T_2P_{\textup{E},A}(B)(u_J^A,U_J^A)$, which by \eqref{eq:weakCEM} and the definition of $P_{\textup{E},A}(B)$ gives
	\begin{equation}
		(D\!\Lambda_\textup{E}(A;B)J)\cdot \bar{I} = a_A[P_{\textup{E},A}(B)(u_J^A,U_J^A),(u_I^A,U_I^A)] = -\inner{u_J^A,u_I^A}_B, \quad I,J\in \mathbb{C}_\diamond^m.
	\end{equation}

        The following bounds are immediate consequences of \eqref{eq:ubound2}--\eqref{eq:PEbound} and \eqref{eq:Nderiv2}.
	\begin{corollary} \label{col:scem}
	Let $A\in L^\infty_+(\Omega)$ and $k\in \mathbb{N}$. Then
	\begin{align*}
		\norm{ D^k\! N_\textup{E}(A) }_{\mathscr{L}^k(L^\infty(\Omega; \mathbb{C}^{d\times d}), \mathscr{L}(\mathbb{C}_\diamond^m,\mathcal{H}))} &\leq \frac{k!C_{\mathcal{H}}}{\min\{c_A,c_\zeta\}^{k+1}}, \\
		\norm{ D^k\! F_\textup{E}(A) }_{\mathscr{L}^k(L^\infty(\Omega; \mathbb{C}^{d\times d}), \mathscr{L}(\mathbb{C}_\diamond^m,H^1(\Omega)))} &\leq \frac{k!{C_{\mathcal{H}}}^2 C_\Omega}{\min\{c_A,c_\zeta\}^{k+1}}, \\
		\norm{ D^k\! \Lambda_\textup{E}(A) }_{\mathscr{L}^k(L^\infty(\Omega; \mathbb{C}^{d\times d}), \mathscr{L}(\mathbb{C}_\diamond^m))} &\leq \frac{k!{C_{\mathcal{H}}}^2}{\min\{c_A,c_\zeta\}^{k+1}}.
	\end{align*}
	\end{corollary}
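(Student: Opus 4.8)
The plan is to read off all three bounds directly from the explicit derivative formula \eqref{eq:Nderiv2} combined with the operator-norm estimates already in hand, namely \eqref{eq:ubound2}, \eqref{eq:PEbound}, and \eqref{eq:normT}. The argument is a straightforward norm estimate; no new analytic input is required beyond submultiplicativity of operator norms and a count of the summands.

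First I would estimate the $k$-linear norm of $D^k N_\textup{E}(A)$. Fixing directions $B_1, \dots, B_k$ with $\norm{B_j} \leq 1$, the formula \eqref{eq:Nderiv2} writes $D^k N_\textup{E}(A; B_1, \dots, B_k)$ as a sum over $\alpha \in \rho_k$ of $k$-fold products $P_{\textup{E},A}(B_{\alpha_1}) \cdots P_{\textup{E},A}(B_{\alpha_k})$ composed with $N_\textup{E}(A)$. By submultiplicativity each summand is controlled by $\norm{N_\textup{E}(A)}_{\mathscr{L}(\mathbb{C}_\diamond^m,\mathcal{H})}$ times the product of the $k$ factor norms. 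The bound \eqref{eq:PEbound} gives $\norm{P_{\textup{E},A}(B_{\alpha_i})}_{\mathscr{L}(\mathcal{H})} \leq \norm{B_{\alpha_i}} / \min\{c_A, c_\zeta\} \leq 1/\min\{c_A, c_\zeta\}$ for each factor, while \eqref{eq:ubound2} yields $\norm{N_\textup{E}(A)}_{\mathscr{L}(\mathbb{C}_\diamond^m,\mathcal{H})} \leq C_\mathcal{H}/\min\{c_A, c_\zeta\}$. Since $\rho_k$ consists of exactly $k!$ permutations of $\{1, \dots, k\}$, summing over $\alpha$ contributes the factor $k!$ and produces the total power $k+1$ of $\min\{c_A, c_\zeta\}$ in the denominator (one from each of the $k$ operators $P_{\textup{E},A}$ and one from $N_\textup{E}(A)$), giving the first estimate. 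The remaining two then follow at once from the identities $D^k F_\textup{E}(A) = T_1 D^k N_\textup{E}(A)$ and $D^k \Lambda_\textup{E}(A) = T_2 D^k N_\textup{E}(A)$ of Theorem~\ref{thm:NDdiff2}, upon multiplying the $N_\textup{E}$ bound by the trace norms $\norm{T_1} \leq C_\mathcal{H} C_\Omega$ and $\norm{T_2} \leq C_\mathcal{H}$ supplied by \eqref{eq:normT}.

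There is no genuine obstacle in this corollary; it is purely a bookkeeping estimate. The only points deserving a moment's attention are the cardinality count $\abs{\rho_k} = k!$, which converts the permutation sum into the combinatorial prefactor, and correctly tallying the exponent $k+1$ so that the powers of $\min\{c_A, c_\zeta\}$ match across the $N_\textup{E}$, $F_\textup{E}$, and $\Lambda_\textup{E}$ bounds.
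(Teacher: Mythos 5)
Your argument is correct and is exactly the one the paper intends: the corollary is stated as an ``immediate consequence'' of \eqref{eq:ubound2}--\eqref{eq:PEbound} and \eqref{eq:Nderiv2}, and your bookkeeping --- $k!$ summands, each bounded by $\min\{c_A,c_\zeta\}^{-k}$ times $\norm{N_\textup{E}(A)}\leq C_\mathcal{H}/\min\{c_A,c_\zeta\}$, then composing with $T_1$ or $T_2$ via \eqref{eq:normT} --- reproduces all three estimates with the correct constants and exponents.
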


	\begin{remark}
	  As for the CM, one could slightly vary the domains and codomains of the operators appearing in Theorem~\ref{thm:NDdiff2}; all such changes would correspond to fixing the ground level of potential in different ways or alternatively leaving it unspecified. As a concrete example, the codomain of $\Lambda_{\textup{E}}(A)$ could as well be $\mathbb{C}^m/\mathbb{C}$.
	\end{remark}

    \begin{remark}
      One could also consider the differentiability of the SCEM with respect to the contact admittance, which would lead to similar analysis. See \cite[Theorem~2.6]{Darde21} for a formula of the first derivative with respect to $\zeta$ in a simpler framework involving only scalar-valued conductivities. 
    \end{remark}
	
	\section{Series reversion} \label{sec:reversion}
	
	Let us fix the known coefficient $A\in L^\infty_+(\Omega)$ and consider an additive perturbation $B\in L^\infty(\Omega; \mathbb{C}^{d\times d})$. Our aim is to deduce asymptotic formulas in powers of $\norm{B}$ for reconstructing $B$ based on partial knowledge of the measurement $\Lambda(A+B)$ or $\Lambda_{\rm E}(A+B)$ combined with {\em a priori} information on a subset of $L^\infty(\Omega; \mathbb{C}^{d\times d})$ containing $B$. Throughout this section it is assumed that $A+B\in L^\infty_+(\Omega)$. 
	
	As preparation for the series reversion results, let us introduce some auxiliary concepts:
	\begin{itemize}
		\item CM: Let $\mathcal{Z} = L^2_\diamond(\Gamma)$. SCEM: Let $\mathcal{Z} = \mathbb{C}_\diamond^m$.
		\item Let $\mathcal{X} \subseteq \mathcal{Z}$ be a closed subspace and $\mathscr{P} \in \mathscr{L}(\mathcal{Z},\mathcal{X})$ be the orthogonal projection onto $\mathcal{X}$.
		\item CM: Let $\mathscr{F} = \mathscr{P}D\!\Lambda(A;\,\cdot\,)\mathscr{P}$. SCEM: Let $\mathscr{F} = \mathscr{P}D\!\Lambda_\textup{E}(A;\,\cdot\,)\mathscr{P}$.
		\item Let $\mathcal{W}\subset L^\infty(\Omega;\mathbb{C}^{d\times d})$ be a closed subspace and define $\mathcal{Y} = \mathscr{F}(\mathcal{W})$. 
	\end{itemize}
	The following, arguably rather restrictive, assumption is needed for our analysis.
	\begin{assumption} \label{assump}
		Assume that $B\in \mathcal{W}$, $\mathscr{F}$ is injective on $\mathcal{W}$, and $\mathcal{Y}$ is closed and complemented in $\mathscr{L}_\textup{C}(\mathcal{X})$.
	\end{assumption}
    Due to Assumption~\ref{assump} and the inverse mapping theorem, $\mathscr{F}\in\mathscr{L}(\mathcal{W},\mathcal{Y})$ has a bounded inverse $\mathscr{F}^{-1} \in \mathscr{L}(\mathcal{Y},\mathcal{W})$. Moreover, it is evident that $\mathcal{W}$ must be finite-dimensional for the SCEM as $\mathcal{Y} \subseteq \mathscr{L}(\mathbb{C}_\diamond^m)$, and the same holds for the CM if $\mathcal{X}$ is finite-dimensional. The last condition of Assumption~\ref{assump} also allows us to introduce one more tool to be used in the following:
	\begin{itemize}
	    \item Let $Q \in \mathscr{L}(\mathscr{L}_\textup{C}(\mathcal{X}),\mathcal{Y})$ be a projection onto $\mathcal{Y}$.
	\end{itemize}
 	Appendix~\ref{sec:HS} introduces a systematic way of forming $Q$ when $d=2$ and $\Gamma = \partial \Omega$. Moreover, as detailed in the remark below, the last condition in Assumption~\ref{assump} automatically holds if $\mathcal{W}$ is finite-dimensional. In Section~\ref{sec:implementation} we also show how to avoid forming $Q$ in practice if $\mathcal{W}$ is finite-dimensional. To our knowledge, it is not known if the last condition in Assumption~\ref{assump} can be satisfied by an infinite-dimensional space $\mathcal{W}$. Nevertheless we leave the method open to such a possibility.
        
	\begin{remark}
	  The existence of a linear and bounded projection $Q$ is based on $\mathcal{Y}$ being a complemented subspace. This is always true if $\mathcal{W}$ is finite-dimensional. Because $T$ is compact, since its codomain is considered as $L^2$, so are all operators of $\mathscr{L}(L^2_\diamond(\Gamma))$ considered in Sections~\ref{sec:continuum} and \ref{sec:taylor}. Hence, we can assume $\mathcal{Y}$ is complemented in $\mathscr{L}_\textup{C}(\mathcal{X})$ rather than in the larger space $\mathscr{L}(\mathcal{X})$. Notice that there are known issues with existence of linear bounded projections from,~e.g.,\ the space of bounded linear operators to the space of compact operators, if the latter is a proper subspace \cite{Thorp1960}.
	\end{remark}
	
	The assumed datum for the CM is $\mathscr{P}\Lambda(A+B)\mathscr{P}$, and that for the SCEM is $\mathscr{P}\Lambda_\textup{E}(A+B)\mathscr{P}$. The projection $\mathscr{P}$ enables the use of a finite number of measurements in the framework of the CM, but we also allow $\mathscr{P}$ to be the identity, which corresponds to the standard infinite-dimensional datum. For the SCEM, $\mathscr{P}$ enables using fewer than the maximal number of linearly independent current patterns.
	
    For the CM, we introduce $M \in \mathscr{L}(\mathscr{L}(L^2_\diamond(\Gamma),H_\diamond^1(\Omega)),\mathcal{W})$ via
    \begin{equation*}
    	M(y) = \mathscr{F}^{-1}Q(\mathscr{P}Ty\mathscr{P})
    \end{equation*}
    and define
	\begin{equation*}
		\quad P = P_A, \quad N = N(A), \quad F_1 = \mathscr{F}^{-1}Q(\mathscr{P}\Lambda(A+B)\mathscr{P}-\mathscr{P}\Lambda(A)\mathscr{P}). 
	\end{equation*}
	For the SCEM, the corresponding definitions are $M \in \mathscr{L}(\mathscr{L}(\C_\diamond^m,\mathcal{H}),\mathcal{W})$ such that
    \begin{equation*}
        M(y) = \mathscr{F}^{-1}Q(\mathscr{P}T_2y\mathscr{P})
    \end{equation*}
    and
	\begin{equation*}
		P = P_{\textup{E},A}, \quad N = N_\textup{E}(A), \quad F_1 = \mathscr{F}^{-1}Q(\mathscr{P}\Lambda_\textup{E}(A+B)\mathscr{P}-\mathscr{P}\Lambda_\textup{E}(A)\mathscr{P}).
	\end{equation*}
	 
	Note that $F_1 \in \mathcal{W}$ is a certain solution to the linearised problem. In what follows, we avoid using parentheses in connection with $M$; at each occurrence, $M$ operates on the whole composition of operators to its right.
	
	Since the CM and SCEM have Taylor series of the same overall structure, we also obtain series reversions of the same type. Focusing on the CM to begin with, Taylor's theorem entails
	\begin{align}
		D\!\Lambda(A;B) &= \Lambda(A+B) - \Lambda(A) - T\sum_{k=2}^K P(B)^k N + O(\norm{B}^{K+1}), \notag \\
		\mathscr{F}(B) = Q(\mathscr{F}(B)) &= Q \Big( \mathscr{P}(\Lambda(A+B) - \Lambda(A))\mathscr{P} - \mathscr{P}T\sum_{k=2}^K P(B)^k N\mathscr{P} + \mathscr{P}O(\norm{B}^{K+1})\mathscr{P}\Big) \notag\\[-1mm]
		 &= \mathscr{F}(F_1) - Q\Big(\mathscr{P}T\sum_{k=2}^K P(B)^k N\mathscr{P} \Big) + Q\big(\mathscr{P}O(\norm{B}^{K+1})\mathscr{P}\big), \notag\\
		B &= F_1 - M\sum_{k=2}^K P(B)^k N + O(\norm{B}^{K+1}). \label{eq:recursion}
	\end{align}
	Exactly the same line of reasoning also leads to the representation \eqref{eq:recursion} for the SCEM, but with the corresponding ``SCEM definitions'' for the involved operators and functions.

    Notice that the remainder term in \eqref{eq:recursion} is $O(\norm{B}^{K+1})$ instead of the slightly weaker version $o(\norm{B}^{K})$. This follows from the integral form for the remainder term in Taylor's theorem since both $D^{K+1}\!\Lambda$ and $D^{K+1}\!\Lambda_{\textup{E}}$ are bounded by Corollaries~\ref{col:cm} and \ref{col:scem}, respectively, in any $A$-centered ball of radius less than $c_A$ in the topology for $L^\infty(\Omega;\mathbb{C}^{d\times d})$. In particular, they are bounded on the line segment
	\begin{equation*}
		[A,A+B] = \{ A + tB \mid t\in [0,1] \} \subset L_+^\infty(\Omega)
	\end{equation*}
	connecting $A$ and $A+B$ for small enough $B$.
	
	The following theorem, which presents our main result on the series reversion for Calder{\'o}n's problem, makes use of abbreviations for certain elements of $\mathcal{W}$:
	\begin{align*}
		G_k &= MP(F_1)^kN, \\
		G_{2,2} &= MP(G_2)^2N, \\
		L_{k,1} &= MP(F_1)P(G_k)N, \\
		R_{k,1} &= MP(G_k)P(F_1)N, \\
		LL_{2,2} &= MP(F_1)P(L_{2,1})N, \\
		RR_{2,2} &= MP(R_{2,1})P(F_1)N, \\
		RL_{2,2} &= MP(L_{2,1})P(F_1)N, \\
		LR_{2,2} &= MP(F_1)P(R_{2,1})N, \\
		C_{2,2} &= MP(F_1)P(G_2)P(F_1)N, \\
		V_{2,2} &= MP(F_1)^2P(G_2)N, \\
		H_{2,2} &= MP(G_2)P(F_1)^2N.
	\end{align*}
	The notation with $L$ and $R$ indicates the relative position (left/right) of the operator $P(F_1)$. The same applies to $V_{2,2}$ and $H_{2,2}$, with the letters originating from the Danish words ``venstre''/``h{\o}jre'' for ``left''/``right''. The sum of the indices gives the order of each term relative to powers of~$\norm{B}$.
	
	\begin{theorem} \label{thm:main}
		Let Assumption~\ref{assump} be satisfied. Then there exist matrix-valued functions $F_j\in L^\infty(\Omega;\mathbb{C}^{d\times d})$, $j \in \N$, only depending on $M$, $P$, $N$ and $F_1$, such that $\norm{F_j} = O(\norm{B}^j)$ and
		\begin{equation} \label{eq:Bseries}
			B = \sum_{j=1}^K F_j + O(\norm{B}^{K+1})
		\end{equation}
		for any $K\in\mathbb{N}$. The first term $F_1$ is given above and the following three are
                %The first four terms, with $F_1$ given above, are
		%
		\begin{align}
        	F_2 &= -G_2, \label{eq:F2} \\
            F_3 &= -G_3 + L_{2,1} + R_{2,1}, \label{eq:F3} \\
            F_4 &= - G_4 - G_{2,2} + C_{2,2} + V_{2,2} + L_{3,1} - LL_{2,2} - LR_{2,2} + H_{2,2} + R_{3,1} - RR_{2,2} - RL_{2,2}. \label{eq:F4}
		\end{align}
	\end{theorem}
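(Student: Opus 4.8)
The strategy is a formal series reversion of the fixed-point identity \eqref{eq:recursion}, reading $F_1 = O(\norm{B})$ as the first-order input and solving for $B$ degree by degree in powers of $\norm{B}$. Conceptually, the reason the reversion exists is the analytic inverse function theorem: in its non-truncated form, \eqref{eq:recursion} reads $\Psi(B) = F_1$ with $\Psi(B) = B + M\sum_{k\geq 2}P(B)^k N$, a map $\mathcal{W} \to \mathcal{W}$ (since $M$ has codomain $\mathcal{W}$) that is analytic near the origin by \eqref{eq:Pbound} (resp.~\eqref{eq:PEbound} for the SCEM) and the boundedness of $M$ and $N$, satisfies $\Psi(0) = 0$, and has $D\Psi(0) = \ident_{\mathcal{W}}$ because each summand $B \mapsto MP(B)^k N$ is $k$-homogeneous with $k \geq 2$ and hence has vanishing first derivative. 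Thus $\Psi$ admits an analytic local inverse, and the $F_j$ are precisely the homogeneous Taylor coefficients of $B = \Psi^{-1}(F_1)$; the substance of the proof is to make these coefficients explicit and to quantify the truncation error as $O(\norm{B}^{K+1})$ rather than merely $o(\norm{B}^K)$, which is possible thanks to the integral form of the Taylor remainder already invoked before the theorem.

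First I would define the $F_j$ recursively. I set $F_1$ as in the statement and, for $j \geq 2$,
\[
	F_j = -\,M\Biggl[\sum_{k=2}^{j}\ \sum_{\substack{i_1+\cdots+i_k=j\\ i_\ell \geq 1}} P(F_{i_1})\cdots P(F_{i_k})\Biggr]N .
\]
Because every composition of $j$ into $k \geq 2$ positive parts has all parts at most $j-1$, this expression only invokes $F_1,\dots,F_{j-1}$; by induction $F_j$ depends solely on $M$, $P$, $N$, $F_1$, is $j$-homogeneous in $F_1$, and, using the boundedness of $M$, $N$ together with $\norm{P(F_i)}\leq \norm{F_i}/c_A$ from \eqref{eq:Pbound}, satisfies $\norm{F_j} \leq C\,\norm{F_1}^{\,j} = O(\norm{B}^j)$. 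Here the last step uses $\norm{F_1}=O(\norm{B})$, which itself follows from the $K=1$ instance of \eqref{eq:recursion}.

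Next I would prove \eqref{eq:Bseries} by induction on $K$. The base case $K=1$ is \eqref{eq:recursion} with an empty sum. For the inductive step I insert the hypothesis $B = \sum_{i=1}^{K-1}F_i + O(\norm{B}^K)$ into each factor $P(B)$ in $\sum_{k=2}^K P(B)^k N$ and expand by multilinearity. Any resulting product containing at least one factor $P(\,\cdot\,)$ applied to the $O(\norm{B}^K)$ remainder is $O(\norm{B}^{K+1})$, since $k \geq 2$ leaves a further factor of order at least $\norm{B}$; the remaining products have the form $P(F_{i_1})\cdots P(F_{i_k})$. Grouping the latter by total degree $j = i_1+\cdots+i_k$, the terms with $j \leq K$ reproduce exactly $\sum_{j=2}^K F_j$ by the recursive definition—the degree bound makes the constraint $i_\ell \leq K-1$ automatic, so no compositions are lost by the truncation at $k=K$—while those with $j \geq K+1$ are $O(\norm{B}^{K+1})$ by the $j$-homogeneous norm bound. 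Collecting terms yields $B = \sum_{j=1}^K F_j + O(\norm{B}^{K+1})$, completing the induction. The argument is identical for the SCEM with the corresponding instantiations of $M$, $P$, $N$.

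Finally I would verify \eqref{eq:F2}--\eqref{eq:F4} by specialising the recursion. For $j=2$ only $k=2$, $i_1=i_2=1$ contributes, giving $F_2 = -MP(F_1)^2N = -G_2$. For $j=3$ the contributions are $k=3$ (the term $P(F_1)^3$) and $k=2$ (the compositions $(1,2)$ and $(2,1)$); substituting $F_2 = -G_2$ converts these into $-G_3 + L_{2,1} + R_{2,1}$. For $j=4$ one expands the compositions for $k=2,3,4$, substitutes the known $F_2$ and $F_3 = -G_3 + L_{2,1} + R_{2,1}$, and matches each resulting composition $MP(\,\cdot\,)\cdots P(\,\cdot\,)N$ with the abbreviations $G_4, G_{2,2}, C_{2,2}, V_{2,2}, H_{2,2}, L_{3,1}, R_{3,1}, LL_{2,2}, LR_{2,2}, RL_{2,2}, RR_{2,2}$ to obtain \eqref{eq:F4}. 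The main obstacle is the bookkeeping in the inductive step—confirming that every remainder-containing product is genuinely $O(\norm{B}^{K+1})$ and that the degree constraint closes the recursion without loss—together with the purely mechanical but lengthy degree-four matching.
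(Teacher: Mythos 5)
Your proposal is correct, and although it starts from the same identity \eqref{eq:recursion} as the paper, it executes the reversion in a genuinely more systematic way. The paper fixes $K=4$, repeatedly substitutes \eqref{eq:recursion} into its own right-hand side while tracking the intermediate quantity $\eta = MP(B)^2N$ by hand, and leaves general $K$ to ``analogous'' computations, with the underlying recursive structure only hypothesised in Remark~\ref{rm:structure}. You instead define every $F_j$ at once by a sum over compositions of $j$ into $k\ge 2$ positive parts and prove \eqref{eq:Bseries} by induction on $K$; the two observations that carry the induction --- that any product in $P(B)^k$ containing the $O(\norm{B}^K)$ remainder also contains at least one further factor of order $\norm{B}$ because $k \ge 2$, and that every composition of $j\le K$ automatically has length at most $K$ and all parts at most $K-1$, so the truncations lose nothing --- are both sound. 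This buys a complete proof for all $K$ rather than only $K\le 4$, and, after resumming your composition sum via $T_j=\sum_{n=1}^{j-1}P(F_n)\bigl(P(F_{j-n})+T_{j-n}\bigr)$ with $F_j=-MT_jN$, it in fact establishes the recursion conjectured in Remark~\ref{rm:structure}. Your degree-$\le 4$ specialisation, using the linearity of $P$ to substitute $F_2=-G_2$ and $F_3=-G_3+L_{2,1}+R_{2,1}$, reproduces \eqref{eq:F2}--\eqref{eq:F4} exactly. The only point worth making explicit is that the constants in $\norm{F_j}\le C_j\norm{F_1}^j$ and in the various remainders depend on $j$ and $K$ but are uniform for $\norm{B}$ in a fixed ball, which follows from \eqref{eq:Pbound} (resp.\ \eqref{eq:PEbound}) together with the boundedness of $M$ and $N$ and the integral-form Taylor remainder discussed before the theorem.
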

	
	\begin{proof}
	As hinted in the statement of the theorem, we will only consider the case $K = 4$. Higher order approximations can be found analogously, but their deduction requires a sufficient level of tenacity as the number of individual terms in $F_j$ increases rapidly with $j$. Many terms that are included in the remainder term in the presented proof would also have to be taken into account.
	
	We derive a series reversion of order 4 by recursively inserting \eqref{eq:recursion} into its right-hand side and collecting all terms of order $\norm{B}^5$ and higher in the remainder term. We first rewrite \eqref{eq:recursion} as
	\begin{equation}
		B = F_1 - MP(B)^2N  - MP(B)^3N - MP(B)^4N + O(\norm{B}^5). \label{eq:recursion4}
	\end{equation}
	Let us then focus on $P(B)^2$:
	\begin{align}
		P(B)^2 &= P\!\left( F_1 - MP(B)^2N - MP(B)^3N - MP(B)^4N\right)^2 + O(\norm{B}^5) \notag\\
		&= P(F_1)^2 + P(MP(B)^2N)^2 - P(F_1)P(MP(B)^2N) - P(MP(B)^2N)P(F_1)   \notag\\
		&\hphantom{{}={}}- P(F_1)P(MP(B)^3N) - P(MP(B)^3N)P(F_1) + O(\norm{B}^5). \label{eq:PB2}
	\end{align}
	Abbreviating $\eta = MP(B)^2N$, and initially focusing on terms of at most order $\norm{B}^2$, we deduce from \eqref{eq:PB2} that
	\begin{equation} \label{eq:Petalow}
		P(\eta) =  P(G_2) + O(\norm{B}^3).
	\end{equation}
	Hence, by \eqref{eq:Petalow} the only term in $P(\eta)^2$ of at most order $\norm{B}^4$ is $P(G_2)^2$, 
	\begin{equation}
		P(\eta)^2 = P(G_2)^2 + O(\norm{B}^5). \label{eq:Petasquared}
	\end{equation}
	To handle $P(F_1)P(\eta)$ and $P(\eta)P(F_1)$ in \eqref{eq:PB2}, we need to determine the terms of at most order $\norm{B}^3$ in $P(\eta)$, namely
	\begin{align}
		P(\eta) &= P(G_2) - P(MP(F_1)P(\eta)N) - P(MP(\eta)P(F_1)N) + O(\norm{B}^4) \notag \\
		&= P(G_2) - P(L_{2,1}) - P(R_{2,1}) + O(\norm{B}^4) \label{eq:Peta},
	\end{align}
	where we used \eqref{eq:Petalow}.
	 
	Inserting \eqref{eq:Petasquared} and \eqref{eq:Peta} into \eqref{eq:PB2}, and collecting the higher order terms in the remainder, leads to
	\begin{align}
		P(B)^2 &=  P(F_1)^2 + P(G_2)^2 - P(F_1)[P(G_2) + P(G_3) - P(L_{2,1}) - P(R_{2,1})]  \notag \\
		&\hphantom{{}={}}- [P(G_2) + P(G_3) - P(L_{2,1}) - P(R_{2,1})]P(F_1) + O(\norm{B}^5), \label{eq:PB22}
	\end{align}
	or rather,
	\begin{align*}
		MP(B)^2N &=  G_2 + G_{2,2} - L_{2,1} - L_{3,1} + LL_{2,2} + LR_{2,2} \notag\\
		&\hphantom{{}={}}- R_{2,1} - R_{3,1} + RL_{2,2} + RR_{2,2}  + O(\norm{B}^5). 
	\end{align*}
	Since $MP(B)^4N = G_4 + O(\norm{B}^5)$, the only remaining term to be investigated in \eqref{eq:recursion4} is $MP(B)^3N$. Making use of \eqref{eq:recursion4}, \eqref{eq:Peta}, and \eqref{eq:PB22}, we obtain
	\begin{align*}
		P(B)^3 &= \bigl[P(F_1) - P(\eta)\bigr]P(B)^2 + O(\norm{B}^5) = \bigl[P(F_1)-P(G_2)\bigr]P(B)^2 + O(\norm{B}^5) \\
		&= P(F_1)^3 - P(G_2)P(F_1)^2 - P(F_1)^2P(G_2) - P(F_1)P(G_2)P(F_1) + O(\norm{B}^5),
	\end{align*}
	or rather,
	\begin{equation*}
		MP(B)^3N = G_3 - H_{2,2} - V_{2,2} - C_{2,2} + O(\norm{B}^5).
	\end{equation*}

        By inserting the above expansions into \eqref{eq:recursion4} and collecting the $j$'th order terms in $F_j$, we finally deduce
	\begin{align*}
	  F_2 &= -G_2, \\[1mm]
	   F_3 &= -G_3 + L_{2,1} + R_{2,1}, \\[1mm]
		F_4 &= - G_4 - G_{2,2} + C_{2,2} + V_{2,2} + L_{3,1} - LL_{2,2} - LR_{2,2} + H_{2,2} + R_{3,1} - RR_{2,2} - RL_{2,2}.
	\end{align*}
	To summarise, we may write
	\begin{equation*}
		B = F_1 + F_2 + F_3 + F_4 + O(\norm{B}^5),
	\end{equation*}
	where $\norm{F_j} = O(\norm{B}^j)$, $j=1, \dots, 4$. 
	\end{proof}

    \begin{remark} \label{rm:structure}
		We observe that the formulas in \eqref{eq:F2}--\eqref{eq:F4} exhibit the following general structure, starting with $\widetilde{P}_1 = 0$ and $F_1$ as above,
		\begin{align*}
			\widetilde{P}_{j} &= \sum_{n=1}^{j-1} P(F_{j-n})(\widetilde{P}_{n} - P(F_n)), \\
			F_{j} &= M\widetilde{P}_{j}N
		\end{align*}
        for $j = 2, 3, 4$. We hypothesise that this recursive scheme actually holds in general,~i.e.~ for any $j \in \mathbb{N}$. If this conjecture is valid, it provides a straightforward method for deriving new higher order series reversion formulas for Calder{\'o}n's problem.
	\end{remark}

    It turns out that under Assumption~\ref{assump} the series \eqref{eq:Bseries} converges for small enough $B\in\mathcal{W}$ as $K$ tends to infinity. This follows from the ``projected relative forward map'' $\mathscr{E}: \mathcal{W}\cap \mathcal{B}_A \to \mathcal{Y}$,
    \begin{equation} \label{eq:forward_ope}
	    \mathscr{E}:
        \Biggl\{
        \begin{array}{ll}
          B \mapsto Q(\mathscr{P}\Lambda(A+B)\mathscr{P}-\mathscr{P}\Lambda(A)\mathscr{P}) & \qquad \text{for the CM}, \\[2mm]
          B \mapsto Q(\mathscr{P}\Lambda_\textup{E}(A+B)\mathscr{P}-\mathscr{P}\Lambda_\textup{E}(A)\mathscr{P}) & \qquad \text{for the SCEM},
        \end{array}
    \end{equation}
    being an analytic diffeomorphism between certain neighbourhoods of the origin in $\mathcal{W}$ and in $\mathcal{Y}$, respectively. Here $\mathcal{B}_A$ is the origin-centered open ball in $L^\infty(\Omega;\mathbb{C}^{d\times d})$ with radius $c_A$, ensuring that $\mathscr{E}$ is well-defined. See also Remark~\ref{remark:conv} below for the connection to the series reversion.
    \begin{theorem}
	    Let Assumption~\ref{assump} be satisfied. Then there exist open neighbourhoods of the origin $U_{\mathcal{W}} \subset \mathcal{W}$ and $U_{\mathcal{Y}} \subset \mathcal{Y}$ such that $\mathscr{E}(U_{\mathcal{W}}) = U_{\mathcal{Y}}$, the restriction $\mathscr{E}|_{U_{\mathcal{W}}}:U_{\mathcal{W}} \to U_{\mathcal{Y}}$ is injective, and it has an analytic inverse $\mathscr{E}^{-1}: U_{\mathcal{Y}} \to U_\mathcal{W}$.
    \end{theorem}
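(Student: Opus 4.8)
The plan is to recognise the claim as a direct application of the analytic inverse function theorem in the Banach-space setting, with $\mathscr{E}$ the map whose local inverse is sought. The three ingredients I must supply are: analyticity of $\mathscr{E}$ near the origin, the normalisation $\mathscr{E}(0)=0$, and the fact that the Fr\'echet derivative $D\mathscr{E}(0)$ is a topological linear isomorphism of $\mathcal{W}$ onto $\mathcal{Y}$.

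First I would verify analyticity. By Theorem~\ref{thm:NDdiff} (resp.\ Theorem~\ref{thm:NDdiff2}) the forward map $\Lambda$ (resp.\ $\Lambda_\textup{E}$) is analytic on $L^\infty_+(\Omega)$; the translation $B\mapsto A+B$ is affine, while the sandwiching $S\mapsto \mathscr{P}S\mathscr{P}$ and the projection $Q$ are bounded linear. Each factor is therefore analytic, and since compositions and restrictions of analytic maps between Banach spaces are analytic, $\mathscr{E}$ is analytic on the open set $\mathcal{W}\cap \mathcal{B}_A \subset \mathcal{W}$. Because $Q$ projects onto $\mathcal{Y}$, the range of $\mathscr{E}$ lies in $\mathcal{Y}$, so $\mathscr{E}$ is a well-defined analytic map into the Banach space $\mathcal{Y}$.

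Next I would compute the linearisation. Clearly $\mathscr{E}(0)=0$, and differentiating through the bounded linear operators $Q$ and $\mathscr{P}(\,\cdot\,)\mathscr{P}$ gives, for $C\in\mathcal{W}$,
\[
	D\mathscr{E}(0;C) = Q(\mathscr{P}D\!\Lambda(A;C)\mathscr{P}) = Q\mathscr{F}(C).
\]
Since $\mathscr{F}(C)\in\mathscr{F}(\mathcal{W}) = \mathcal{Y}$ and $Q$ restricts to the identity on $\mathcal{Y}$, this simplifies to $D\mathscr{E}(0;C)=\mathscr{F}(C)$, i.e.\ $D\mathscr{E}(0)=\mathscr{F}|_{\mathcal{W}}$. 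Under Assumption~\ref{assump}, $\mathscr{F}$ is injective on $\mathcal{W}$ and maps $\mathcal{W}$ onto the closed subspace $\mathcal{Y}$, so the bounded inverse theorem yields $\mathscr{F}^{-1}\in\mathscr{L}(\mathcal{Y},\mathcal{W})$, exactly as recorded right after Assumption~\ref{assump}. Hence $D\mathscr{E}(0)$ is an isomorphism of Banach spaces from $\mathcal{W}$ onto $\mathcal{Y}$.

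With analyticity, $\mathscr{E}(0)=0$, and $D\mathscr{E}(0)$ an isomorphism all in hand, the analytic inverse function theorem for maps between (complex) Banach spaces supplies open neighbourhoods $U_{\mathcal{W}}\subset\mathcal{W}$ and $U_{\mathcal{Y}}\subset\mathcal{Y}$ of the respective origins with $\mathscr{E}(U_{\mathcal{W}})=U_{\mathcal{Y}}$, on which $\mathscr{E}$ is injective with analytic inverse $\mathscr{E}^{-1}:U_{\mathcal{Y}}\to U_{\mathcal{W}}$. The SCEM case is verbatim identical, using Theorem~\ref{thm:NDdiff2} in place of Theorem~\ref{thm:NDdiff} and $\Lambda_\textup{E}$ in place of $\Lambda$. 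I expect no serious obstacle beyond bookkeeping; the one point demanding care is confirming that $D\mathscr{E}(0)$ maps \emph{onto} all of $\mathcal{Y}$ and not merely into it, which is precisely where the surjectivity and the closedness (together with complementedness) in Assumption~\ref{assump}, funnelled through the bounded inverse theorem, become indispensable.
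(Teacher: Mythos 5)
Your proposal is correct and follows essentially the same route as the paper: identify $D\mathscr{E}(0;\,\cdot\,) = Q\mathscr{F} = \mathscr{F}$ on $\mathcal{W}$, invoke Assumption~\ref{assump} (via the bounded inverse theorem) to see that $\mathscr{F}:\mathcal{W}\to\mathcal{Y}$ is a linear homeomorphism, and conclude with the analytic inverse function theorem for Banach spaces. Your additional bookkeeping on the analyticity of $\mathscr{E}$ as a composition and on $Q$ acting as the identity on $\mathcal{Y}$ is exactly what the paper leaves implicit.
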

    \begin{proof}
	    Recall that $Q$ is a bounded linear projection onto $\mathcal{Y} = \mathscr{F}(\mathcal{W})$, and recall the definitions $\mathscr{F} =  \mathscr{P} D\!\Lambda(A;\,\cdot\,) \mathscr{P}$ for the CM and $\mathscr{F} = \mathscr{P}D\!\Lambda_\textup{E}(A;\,\cdot\,)\mathscr{P}$ for the SCEM. 
	    
	    By restricting $\mathscr{F}$ to $\mathcal{W}$, it follows immediately from \eqref{eq:forward_ope} that $\mathscr{F} = Q\mathscr{F}$ is the Fr\'echet derivative of $\mathscr{E}$ at the origin,~i.e.~$D\!\mathscr{E}(0; \, \cdot \,) = \mathscr{F}$. As $\mathscr{E}$ is analytic and $\mathscr{F}: \mathcal{W} \to \mathcal{Y}$ is a linear homeomorphism by virtue of Assumption~\ref{assump}, the assertion is a direct consequence of the inverse function theorem for analytic maps between Banach spaces, cf.~\cite[Appendix~I]{Valent1988} and the references therein.
	\end{proof}
    \begin{remark} \label{remark:conv}
    	From the uniqueness of the Taylor series representation for an analytic operator, the first $K$ terms in \eqref{eq:Bseries} indeed correspond to the first $K$ terms of the Maclaurin series for $\mathscr{E}^{-1}$ evaluated at $\mathscr{E}(B)$. In particular, observe that $F_1 = \mathscr{F}^{-1} \mathscr{E}(B)$, which leads to each term $F_j$ in \eqref{eq:Bseries} being a $j$-linear form evaluated at $(\mathscr{E}(B), \dots, \mathscr{E}(B))$.  Moreover, since $\mathscr{F}: \mathcal{W} \to \mathcal{Y}$ is assumed to be a linear homeomorphism, it is easy to see that one could as well write $F_j = O(\norm{\mathscr{E}(B)}_{\mathscr{L}(\mathcal{X})}^j)$.
    \end{remark}
        
    \section{Remarks on numerical implementation}
    \label{sec:implementation}
	
	For simplicity we focus on the isotropic case in our numerical considerations, meaning that $A(x) = a(x)I$ and $B(x) = b(x)I$ where $I$ is an identity matrix, and $a$ and $b$ are bounded measurable scalar-valued functions. Moreover, we assume $a$ is real-valued and has a positive infimum. Naturally, the discretisation of $D\!\Lambda(A)$ would become more involved for anisotropic coefficients as each matrix element would have to be handled separately. We only consider the CM in what follows, although the presented ideas could be straightforwardly used for the SCEM as well.
	
	Our discretisation of $D\!\Lambda(A)$ is based on a partitioning of a subset $\widetilde{\Omega}\subseteq\Omega$ into measurable sets $\{\Omega_n\}_{n=1}^{\mathsf{N}}$ and the use of an orthonormal basis $\{f_j\}_{j\in\mathbb{N}}$ for $L^2_{\diamond}(\Gamma)$. One may,~e.g.,~have prior knowledge that the perturbation is supported at a distance from $\partial\Omega$, which can then be reflected in the choice of $\widetilde{\Omega}$. We make the assumption that $b$ is piecewise constant on the partition, meaning that $b|_{\Omega_n} \equiv b_n\in\mathbb{C}$ for $n=1,\dots,\mathsf{N}$. Thereby $\mathcal{W}$ is the vector space of piecewise constant functions adhering to the partition $\{\Omega_n\}_{n=1}^{\mathsf{N}}$ and extended by zero to the rest of $\Omega$. 
	
	Define $u_j = Nf_j$. In the considered case $A$ is obviously Hermitian, so we can use \eqref{eq:DLambdasimple} to obtain
	\begin{equation} \label{eq:Fdisk}
		\inner{D\!\Lambda(A;\chi_{\Omega_n}I)f_j,f_i}_{L^2(\Gamma)} = -\int_{\Omega_n} \nabla u_j\cdot \overline{\nabla u_i}\,\di x, \quad i,j \in \N.
	\end{equation}
	This gives a means for calculating an (infinite) matrix representation for the derivative with respect to $\chi_{\Omega_n}I$, i.e.\ with respect to the ``$n$'th pixel'', for $n=1,\dots,\mathsf{N}$. It is evident from \eqref{eq:Fdisk} that once all considered $u_j$ have been numerically approximated, e.g.,\ using a finite element method, the discretisation of $D\!\Lambda(A)$ is merely a question of numerical integration. In particular, as the approximations of $u_j$ are already needed for representing the ND map $\Lambda(A)$ itself, the extra computational cost in \eqref{eq:Fdisk} lies solely with the aforementioned numerical integration. See also \cite{Harrach2021} for additional details on such finite element implementations.

    In practice, one cannot consider an infinite number of boundary current densities, which we take into account by defining $\mathscr{P}$ to be the orthogonal projection onto the span of, say, the first $\mathsf{J}$ basis functions $\{f_j\}_{j=1}^\mathsf{J}$. In accordance with this idea, let $\mathsf{\Lambda}$ be the $\mathsf{J}\times\mathsf{J}$ matrix representation of the datum $\mathscr{P}\Lambda(A+B)\mathscr{P}$ with respect to $\{f_j\}_{j=1}^\mathsf{J}$, which in the following is also identified with a $\mathsf{J}^2$-column vector. Furthermore, let $\mathsf{D\Lambda}^{-1}$ be the Moore--Penrose pseudoinverse of the $\mathsf{J}^2\times\mathsf{N}$ matrix representation of $\mathscr{F} = \mathscr{P}D\!\Lambda(A;\,\cdot\,)\mathscr{P}$. If $\mathscr{F}$ is injective on $\mathcal{W}$, which may be achieved,~e.g.,~via choosing $\mathsf{J}$ to be large enough, then an application of $\mathsf{D\Lambda}^{-1}$ corresponds to first projecting onto $\mathcal{Y} = \mathscr{F}(\mathcal{W})$ and then applying $\mathscr{F}^{-1} : \mathcal{Y}\to\mathcal{W}$. The implementation of the projection $Q$ is therefore implicitly included in this construction.
		
    Once $F_1$ has been determined, the other $F_j$ can be found inductively. Indeed, by having a closer look at \eqref{eq:F2}--\eqref{eq:F4}, it becomes clear that once a (discretised) inverse of $\mathscr{F}$ is in hand, determining $F_2$ essentially only requires applications of $P(F_1)$, determining $F_3$ only requires applications of $P(F_1)$ and $P(F_2)$, and determining $F_4$ only requires applications of $P(F_1)$, $P(F_2)$, and $P(F_3)$; see Remark~\ref{rm:structure}. In particular, applying $P(F_1)$, $P(F_2)$, or $P(F_3)$ is cheaper than assembling and inverting the matrix representation of $\mathscr{F}$; as these operators share the sesquilinear form associated to the weak formulation of the CM (or SCEM), the matrix factorisation of the finite element system used in the computations of $\{u_j\}_{j=1}^\mathsf{J}$ may be reused. 
	
	To be more precise, $F_1$ can computed via
	\begin{equation*}
		F_1 = \mathsf{D\Lambda}^{-1}\Big( \mathsf{\Lambda} - \big[\inner{Tu_j,f_i}_{L^2(\Gamma)}\big]_{i,j=1}^\mathsf{J} \Big).
	\end{equation*}
	Subsequently, $F_2$ is given by
	\begin{align*}
		h_j &= -P(F_1) u_j, \quad j=1,\dots,\mathsf{J}, \\
		v_j &= P(F_1) h_j, \quad j=1,\dots,\mathsf{J}, \\
		F_2 &= \mathsf{D\Lambda}^{-1} \big[\inner{Tv_j,f_i}_{L^2(\Gamma)}\big]_{i,j=1}^\mathsf{J}.
	\end{align*}
	Next, it is the turn of $F_3$:
	\begin{align*}
		w_j &= -P(F_2) u_j, \quad j=1,\dots,\mathsf{J}, \\
		p_j &= P(F_2)h_j, \quad j=1,\dots,\mathsf{J}, \\
		q_j &= P(F_1)\left[v_j+w_j\right], \quad j=1,\dots,\mathsf{J}, \\
		F_3 &= \mathsf{D\Lambda}^{-1} \big[\inner{T(p_j+q_j),f_i}_{L^2(\Gamma)}\big]_{i,j=1}^\mathsf{J}.
	\end{align*}
	Lastly, we may compute $F_4$ as follows:
	\begin{align*}
		r_j &= -P(F_3)u_j, \quad j=1,\dots,\mathsf{J}, \\
		x_j &= P(F_3)h_j, \quad j=1,\dots,\mathsf{J}, \\
		y_j &= P(F_2)\left[ v_j + w_j \right], \quad j=1,\dots,\mathsf{J}, \\
		z_j &= P(F_1)\left[ p_j + q_j + r_j \right], \quad j=1,\dots,\mathsf{J}, \\
		F_4 &= \mathsf{D\Lambda}^{-1}\big[\inner{T(x_j+y_j+z_j),f_i}_{L^2(\Gamma)}\big]_{i,j=1}^\mathsf{J}.
	\end{align*}
	It is straightforward to verify that the above formulas are concordant with \eqref{eq:F2}--\eqref{eq:F4} in Theorem~\ref{thm:main}.
	
	To put the computational complexity of our construction into perspective, consider instead resorting to some higher order Newton-type numerical method for approximately solving Calder{\'o}n's problem. Then one would need, e.g.,\ a discretisation of the second derivative $D^2\!\Lambda(A)$, which in the context of isotropic coefficients corresponds to computing $\inner{D^2\!\Lambda(A;\chi_{\Omega_n}I,\chi_{\Omega_{\tilde{n}}}I)f_j,f_i}_{L^2(\Gamma)}$ for $n,\tilde{n}=1,\dots,\mathsf{N}$ and $i,j = 1,\dots,\mathsf{J}$.  The size of such a matrix representation grows by a factor $\mathsf{N}$ each time a higher order derivative is included in the analysis, quickly becoming infeasibly large for practical computations. On the other hand, the total computational complexity of our series reversion method only grows by a scalar multiple (independent of $\mathsf{N}$) when increasing its order.

	\begin{remark} \label{remark:regularisation}
          It should be emphasised that the {\em only} ill-conditioned steps in the proposed family of numerical methods are the applications of $\mathsf{D\Lambda}^{-1}$. For reconstruction from noisy measurements, or simply due to the inherent ill-conditioning of the involved computations, one arguably should usually resort to some regularised inverse when applying $\mathsf{D\Lambda}^{-1}$~\cite{Engl1996}. To this end, one can e.g.\ use a truncated singular value decomposition for computing a regularised version of $\mathsf{D\Lambda}^{-1}$, with all singular values below a given threshold $\alpha\geq 0$ set to zero before finding the Moore--Penrose inverse. In addition, one may set a lower limit  $\beta\geq 0$ for the pointwise contrast required from the reconstructed perturbation via replacing $F_j$ (before continuing to compute $F_{j+1}$) by the expression
		\begin{equation*}
			\tau_\beta \biggl(\sum_{k=1}^j F_k\biggr) - \sum_{k=1}^{j-1}F_k.
		\end{equation*}  
		Here, the cut-off function $\tau_\beta$ is applied pointwise, with $\tau_\beta(C(x)) = C(x)$ if $\norm{C(x)}_2 \geq \beta$ and $\tau_\beta(C(x)) = 0$ otherwise.
	\end{remark}

%% NH: Reworded the start of the following subsection       
    \subsection{First numerical test}
        
    In the first numerical test, we present a few simple numerical reconstructions of a certain isotropic perturbation $B$ in a two-dimensional unit disk $\Omega$; see Figure~\ref{fig:fig1}. The perturbation takes the value $0.3$ in a square-shaped inclusion and the value $0.8$ in a pentagon-shaped inclusion, and it vanishes in the rest of $\Omega$. In all examples, $A$ is the identity matrix, corresponding to an isotropic unit background conductivity in $\Omega$.
        
    \begin{figure}[!htb]
    	\includegraphics[width=.28\linewidth]{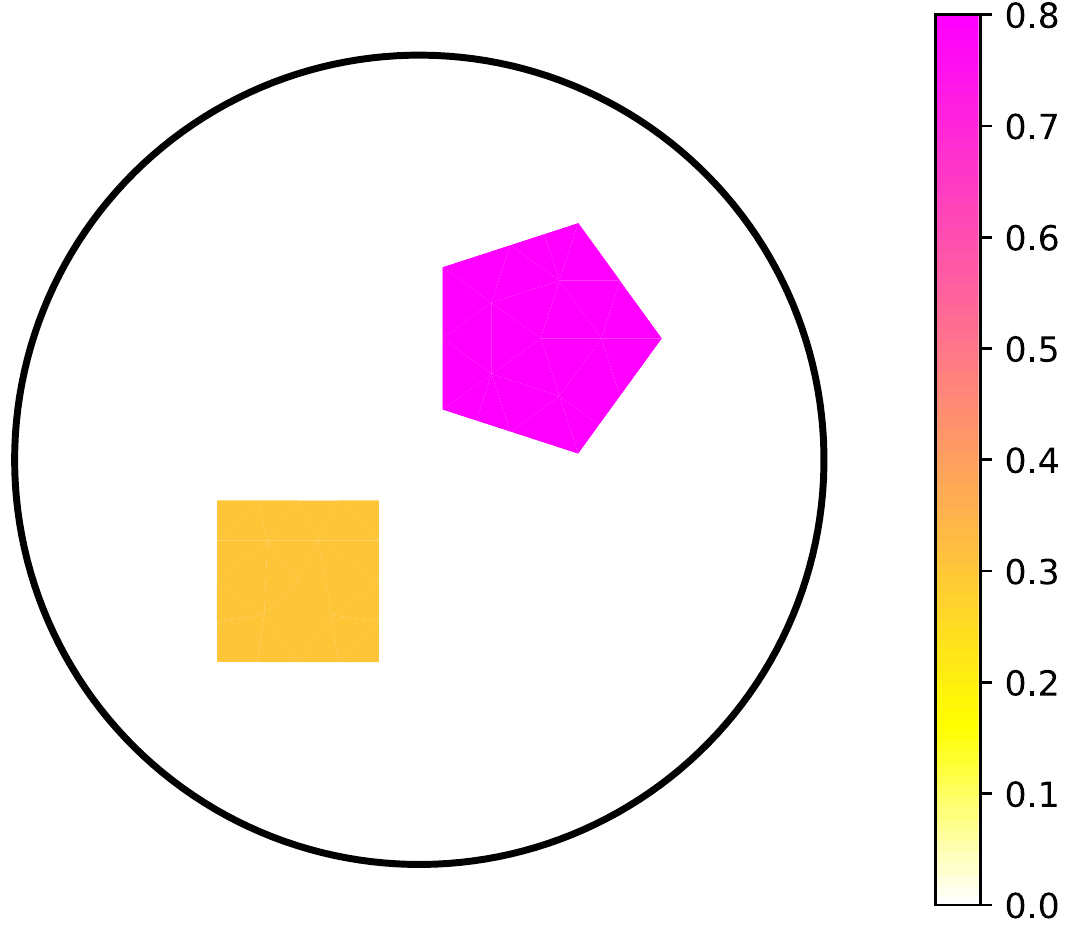}
    	\caption{Isotropic perturbation $B$.} \label{fig:fig1}
    \end{figure}

    We employ the first $\mathsf{J} = 20$ orthonormal Fourier basis functions of $L^2_\diamond(\partial\Omega)$ as the Neumann boundary values, i.e.~as the input boundary current densities. A finite element method with $\mathbb{P}_3$-elements is used for numerically solving all involved variational problems. A fine triangular mesh, for which the inclusion boundaries are aligned with edges of the elements, is used for simulating the datum~$\mathsf{\Lambda}$. Another fine mesh is employed for solving the variational problems involved in the series reversion. However, a much coarser mesh and the associated triangle-wise piecewise constant basis functions define the discretisation for $\mathcal{W}$, which contains $F_j$, $j=1,\dots,4$, and serves as the codomain for $\mathsf{D\Lambda^{-1}}$. In terms of regularisation, we follow Remark~\ref{remark:regularisation} with $\alpha = 3\cdot 10^{-5}$ and $\beta = 10^{-1}$. Moreover, it is assumed to be {\em a priori} known that the distance from the support of $B$ to $\partial \Omega$ is at least $0.15$, that is, $\widetilde{\Omega}$ is an origin-centered disk of radius $0.85$. All presented reconstructions correspond to noiseless data (not accounting for numerical inaccuracies).

	\begin{figure}[!htb]
		\includegraphics[width=\linewidth]{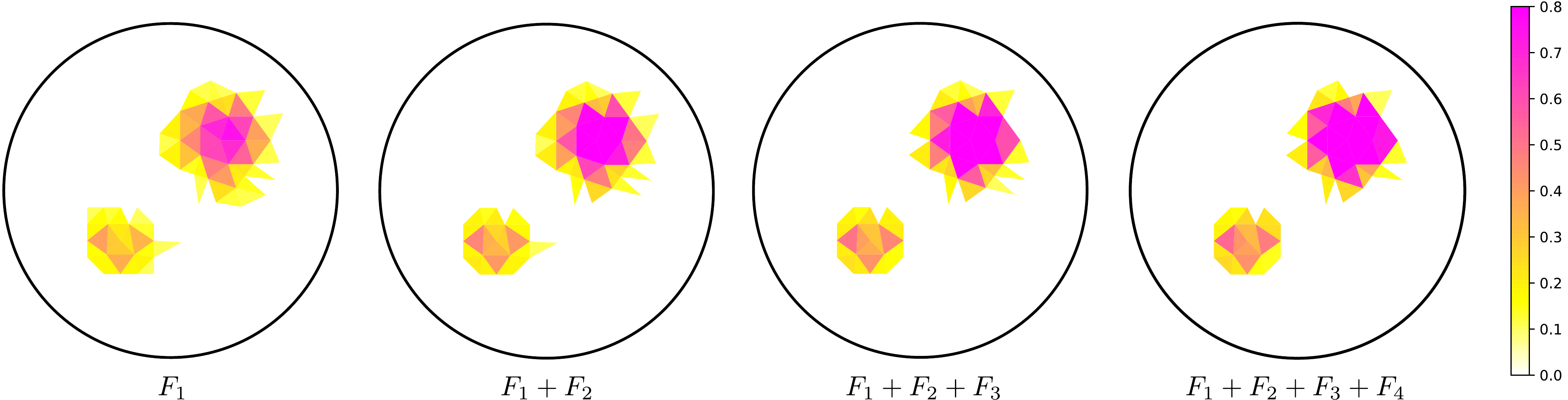}
		\caption{Approximations of $B$ by series reversions of different orders with $B\in\mathcal{W}$.} \label{fig:fig2}
	\end{figure}
	
	\begin{figure}[!htb]
		\includegraphics[width=\linewidth]{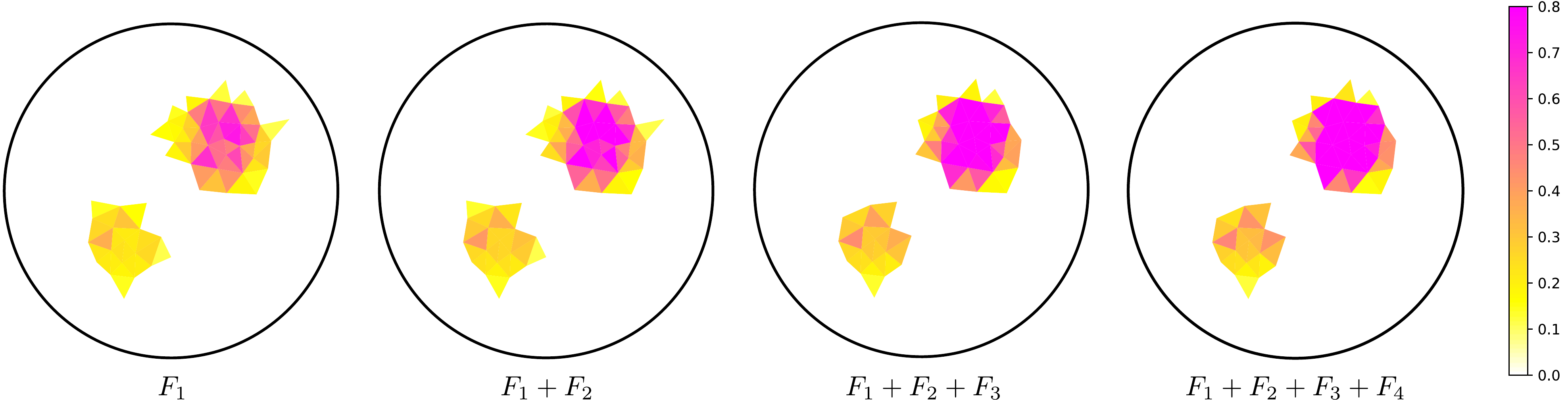}
		\caption{Approximations of $B$  by series reversions of different orders with $B\not\in\mathcal{W}$.} \label{fig:fig3}
	\end{figure}

    Figure~\ref{fig:fig2} shows the four reconstructions defined by $F_1, \dots, F_4$ on a reconstruction mesh aligned with the unknown inclusion boundaries, meaning that $B\in\mathcal{W}$. The reconstructions in Figure~\ref{fig:fig3} correspond to a more realistic and practically relevant setting, where the mesh is not aligned with the inclusion boundaries, i.e.\ $B\not\in\mathcal{W}$. In both figures, the quality of the presented reconstructions increases as more terms are included in \eqref{eq:Bseries}. Not surprisingly, the reconstructions in Figure~\ref{fig:fig2} more accurately capture the shapes of the inclusions defining $B$.

    On a standard laptop and the denser mesh used for solving the variational problems, $F_1$ is formed on average in 14.17 seconds, with the majority of the time spent on assembling the Fr\'echet derivative and factorising the finite element system matrix. The computation times of the subsequent terms $F_2$, $F_3$, and $F_4$ are 2.40, 3.51, and 4.72 seconds, respectively. The total time for acquiring $F_4$ is thus 24.80~seconds. Using the coarse reconstruction mesh for also solving the variational problems during the reconstruction process, resulting only in slight changes to the reconstructions, the respective computation times for forming $F_1$, $F_2$, $F_3$, and $F_4$ are 11.30, 0.39, 0.50, and 0.61~seconds.

    %% NH: Completely new subsection
    \subsection{Second numerical test}

    The main objective of the second numerical test is to verify the convergence rates predicted by Theorem~\ref{thm:main} in a simple radially symmetric geometry. In particular, we avoid considering the sources of numerical errors, stemming from,~e.g.,~the choice of a finite element method, by using a more direct implementation in the chosen simple setting. To this end, let $\Omega\subset \R^2$ again be the open unit disk, let the constant background conductivity $A$ still equal the identity matrix $I$, and suppose the perturbation $B$ is of the form
    \begin{equation} \label{eq:conB}
    	B = \begin{dcases}
      		\kappa_1 I &  \text{in} \ \Omega \setminus \overline{D_{\rho}}, \\ 
      		\kappa_2 I &  \text{in} \ D_{\rho},
    	\end{dcases}
    \end{equation}
    for scalars $\kappa_1,\kappa_2\in (-1,\infty)$ and where $D_\rho$ is a smaller open origin-centered disk of radius $\rho \in(0,1)$. We assume to know the radius $\rho$ and thus choose the subspace $\mathcal{W}$, from which the reconstruction is sought, to be
    \begin{equation*}
    	\mathcal{W} = \big\{ \eta_1 \chi_1 I +  \eta_2 \chi_2 I \mid \eta_1, \eta_2 \in \C \big\},
    \end{equation*}
    where $\chi_1$ and $\chi_2$ are the characteristic functions of $\Omega \setminus \overline{D_{\rho}}$ and $D_{\rho}$, respectively. The elements of $\mathcal{W}$ can obviously be identified with vectors of $\C^2$.

    Let $f_j= \frac{1}{\sqrt{2 \pi}} {\rm e}^{{\rm i}j \, \cdot}$, $j \in \Z' = \Z \setminus \{ 0 \}$, denote the orthonormal Fourier basis functions for $L^2_\diamond(\partial \Omega)$ parametrised with respect to the polar angle. Straightforward calculations reveal that the ND maps $\Lambda(A)$ and $\Lambda(A + B)$ are characterised by the spectral decompositions
    $$
    \Lambda(A): f_j \mapsto \frac{1}{|j|} f_j
    $$
    and
    \begin{equation}
      \label{eq:FApBND}
    \Lambda(A + B): f_j \mapsto \frac{1}{(\kappa_1 + 1)|j|}  \, \frac{(\kappa_1 + \kappa_2 + 2) - (\kappa_2-\kappa_1)\rho^{2|j|}}{(\kappa_1 + \kappa_2 + 2) + (\kappa_2-\kappa_1)\rho^{2|j|}} \, f_j
    \end{equation}
    for $j \in \Z'$. It is also well known that
    \begin{equation}
      \label{eq:FNeumann}
    N(A)f_j(r, \theta) = \frac{1}{\sqrt{2 \pi}|j|} \,  r^{|j|} \, {\rm e}^{{\rm i}j \theta}, \qquad j \in \Z',
    \end{equation}
    where $(r, \theta)$ are the polar coordinates in $\Omega$.

    Let us next consider $P(\eta)$, where $\eta \in \C^2$ is identified with the corresponding element of $\mathcal{W}$. As the conductivity perturbations in $\mathcal{W}$ equal scalar multiples of the identity matrix in both $\Omega \setminus \overline{D_{\rho}}$ and $D_{\rho}$, we only need to consider how $P(\eta)$ operates on elements of
    \begin{equation*}
    	\mathscr{H}(\Omega) = \big \{ v \in H^1_\diamond(\Omega) \ \big| \ \Delta v = 0 \ \text{in} \ \Omega \setminus \partial D_{\rho} \}  
    \end{equation*}
    since the range of $N(A)$ is obviously contained in $\mathscr{H}(\Omega)$ and $P(\eta)(\mathscr{H}(\Omega)) \subset \mathscr{H}(\Omega)$ for any $\eta \in \C^2$, as easily follows from the definition of $P$ in~\eqref{eq:defP}. The space $\mathscr{H}(\Omega)$ is composed of the functions
    \begin{equation} \label{eq:scrH}
    	v(r, \theta) = \begin{dcases}
      {\displaystyle \sum_{j \in \Z'}} \big( \alpha_j  r^{|j|} + \beta_j r^{-|j|} \big) {\rm e}^{{\rm i}j \theta} &  \text{in} \ \Omega \setminus \overline{D_{\rho}}, \\
     {\displaystyle \sum_{j \in \Z'}} \gamma_j  r^{|j|} {\rm e}^{{\rm i}j \theta}  & \text{in} \ D_{\rho},
    \end{dcases}
    \end{equation}
    with coefficients $\alpha_j, \beta_j, \gamma_j \in \C$ that satisfy the conditions
    \begin{equation} \label{eq:other_two_conditions}
    	\sum_{j \in \Z'} |j| \big( |\alpha_j|^2 + |\beta_j|^2) < \infty \qquad \text{and} \qquad
    	\alpha_j + \rho^{-2|j|} \beta_j = \gamma_j, \quad j \in \Z'.
    \end{equation}
    %
    %Indeed, \eqref{eq:scrH} represents a general distribution that is harmonic in $\Omega \setminus \partial D_{\rho}$, 
    The first condition in \eqref{eq:other_two_conditions} is equivalent to requiring that $v|_{\Omega \setminus \overline{D_{\rho}}} \in H^1(\Omega \setminus \overline{D_{\rho}})$, and the second condition
    %in \eqref{eq:other_two_conditions}
    is equivalent to the Dirichlet trace of $v$ being continuous over $\partial D_{\rho}$. The two conditions in \eqref{eq:other_two_conditions} also guarantee that $v|_{D_\rho} \in H^1(D_{\rho})$.

    In consequence, it is sufficient to understand how $P(\eta)$ operates on functions satisfying \eqref{eq:scrH} and \eqref{eq:other_two_conditions}. Via a tedious but straightforward calculation based on the definition of $P$, it follows that an arbitrary element of $\mathscr{H}(\Omega)$, defined through its coefficients $\{(\alpha_j, \beta_j, \gamma_j)\}_{j\in \Z'}$, is mapped by $P(\eta)$ to another element of $\mathscr{H}(\Omega)$ whose coefficients $\{ (\widetilde{\alpha}_j, \widetilde{\beta}_j, \widetilde{\gamma}_j)\}_{j \in \Z'}$ are given by
    \begin{equation} \label{eq:Drecursion}
    \begin{bmatrix}
    \widetilde{\alpha}_j\\
    \widetilde{\beta}_j \\
    \widetilde{\gamma}_j
    \end{bmatrix}
    =
    \frac{1}{2}\left( \eta_1 \begin{bmatrix}
    \rho^{2|j|} - 2 & 1 & 0 \\  \rho^{2|j|} & -1 & 0 \\ \rho^{2|j|} - 1 & 1-\rho^{-2|j|} & 0
    \end{bmatrix}
    - \eta_2 \, \rho^{2|j|}
    \begin{bmatrix}
    0 & 0 & 1 \\
    0 & 0 & 1 \\
    0 & 0 & 1 + \rho^{-2|j|}
    \end{bmatrix} \right)
    \begin{bmatrix}
    \alpha_j\\
    \beta_j  \\
    \gamma_j
    \end{bmatrix}
    \end{equation}
    for all $j \in \Z'$. Note that the vectors in the range of the matrix on the right-hand side of \eqref{eq:Drecursion} automatically satisfy the second condition of \eqref{eq:other_two_conditions}.

    Finally, as $T$ maps a function of the form \eqref{eq:scrH} to
    \begin{equation} \label{eq:Ftrace}
    	\sum_{j \in \Z'} (\alpha_j + \beta_j){\rm e}^{{\rm i}j \, \cdot} \in L^2_\diamond(\partial \Omega),
    \end{equation}
    we have introduced all necessary tools for implementing the approximation formulas of Theorem~\ref{thm:main} in our simple concentric geometry. In particular, all terms appearing on the right-hand side of \eqref{eq:F2}--\eqref{eq:F4} are obtained by applying the operator $M$, defined by the inverse of the projected derivative $\mathscr{F}$, to operators that admit spectral decompositions with respect to the Fourier basis $\{f_j\}_{j \in \Z'}$. The derivative $D\!\Lambda(A; \eta)$ itself also obeys such a spectral decomposition for any $\eta \in \C^2$: based on \eqref{eq:Nderiv}, \eqref{eq:FNeumann}, \eqref{eq:Drecursion}, and \eqref{eq:Ftrace},
    \begin{equation*}
    	D\!\Lambda(A; \eta): f_j \mapsto \frac{1}{|j|}\left( \eta_1 \big(\rho^{2 |j|} - 1 \big) - \eta_2 \, \rho^{2 |j|} \right) \! f_j,
    	\qquad j \in \Z',
    \end{equation*}
    which can also be verified using \eqref{eq:FApBND}. Hence, if $\mathscr{P}$ is chosen to be a projection onto the span of some Fourier basis functions --- as it will be in what follows --- one can omit its left-hand occurrences in the definitions of both $\mathscr{F}$ and $M$.
	
	\begin{example}
	
    Let us then present the actual numerical examples. Consider first $\rho = 0.3$ and the cases when either $\kappa_1$ or $\kappa_2$ is {\em a priori} known to be zero. Because there is only one parameter to be reconstructed, we also choose to employ only one current pattern and select $\mathscr{P}$ to be the orthogonal projection onto $\mspan\{f_1\}$. The left and right images in Figure~\ref{fig:conv1} show the signed errors 
    \begin{equation*}
    	\kappa_1- \sum_{k=1}^K F_k \quad \textup{and} \quad \kappa_2 - \sum_{k=1}^K F_k
    \end{equation*}
    as functions of $\kappa_1$ and $\kappa_2$, respectively, over the interval $[-0.5, 1]$. The approximation becomes uniformly better as $K$ increases. An interesting observation is that for negative perturbations the approximations resemble Taylor polynomials in the sense that the sign of the error alternates as a function of $K$, but for positive perturbations the signed error remains positive.

    \begin{figure}[htb]
		\includegraphics[width=0.485\linewidth]{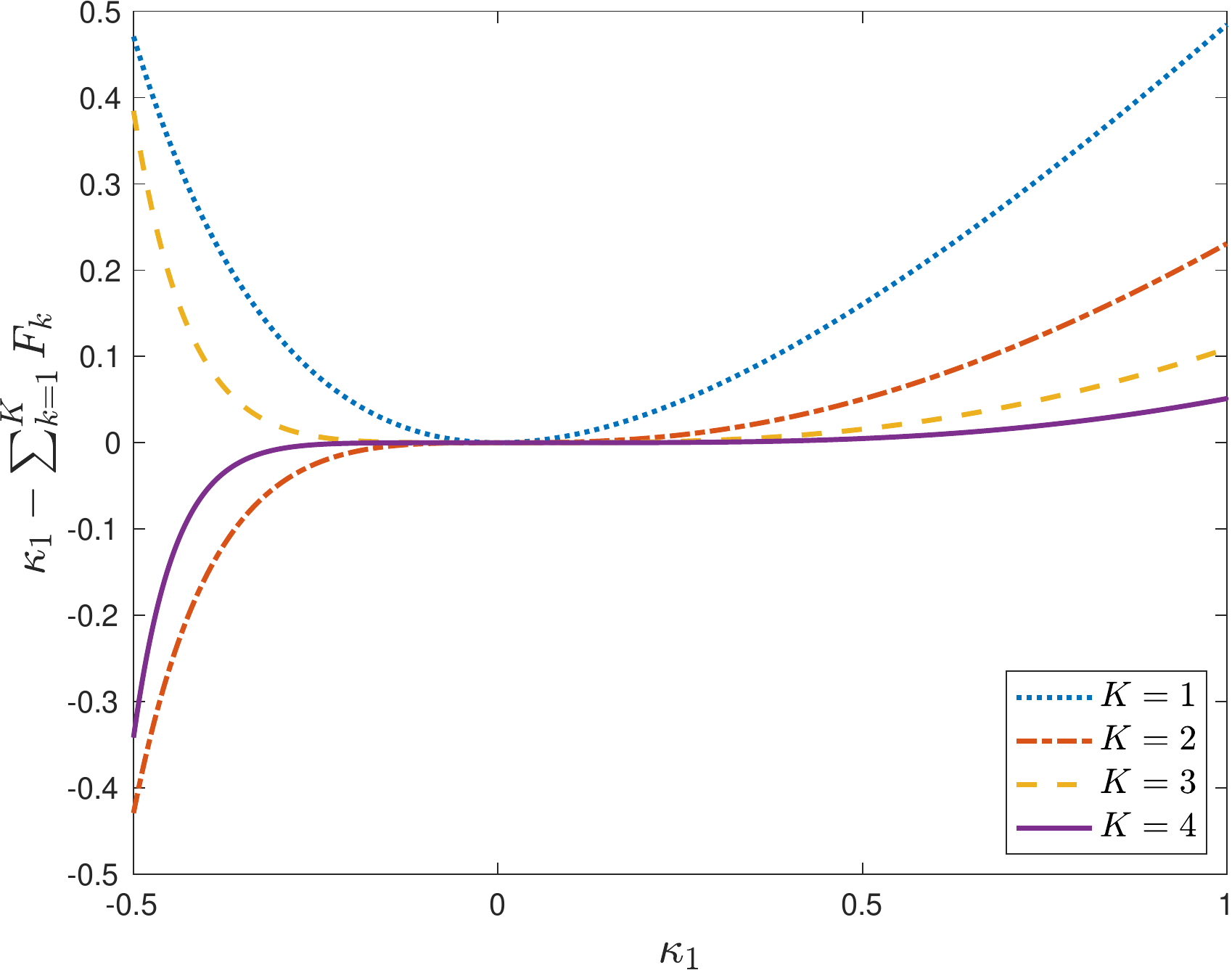} \hfill
        \includegraphics[width=0.485\linewidth]{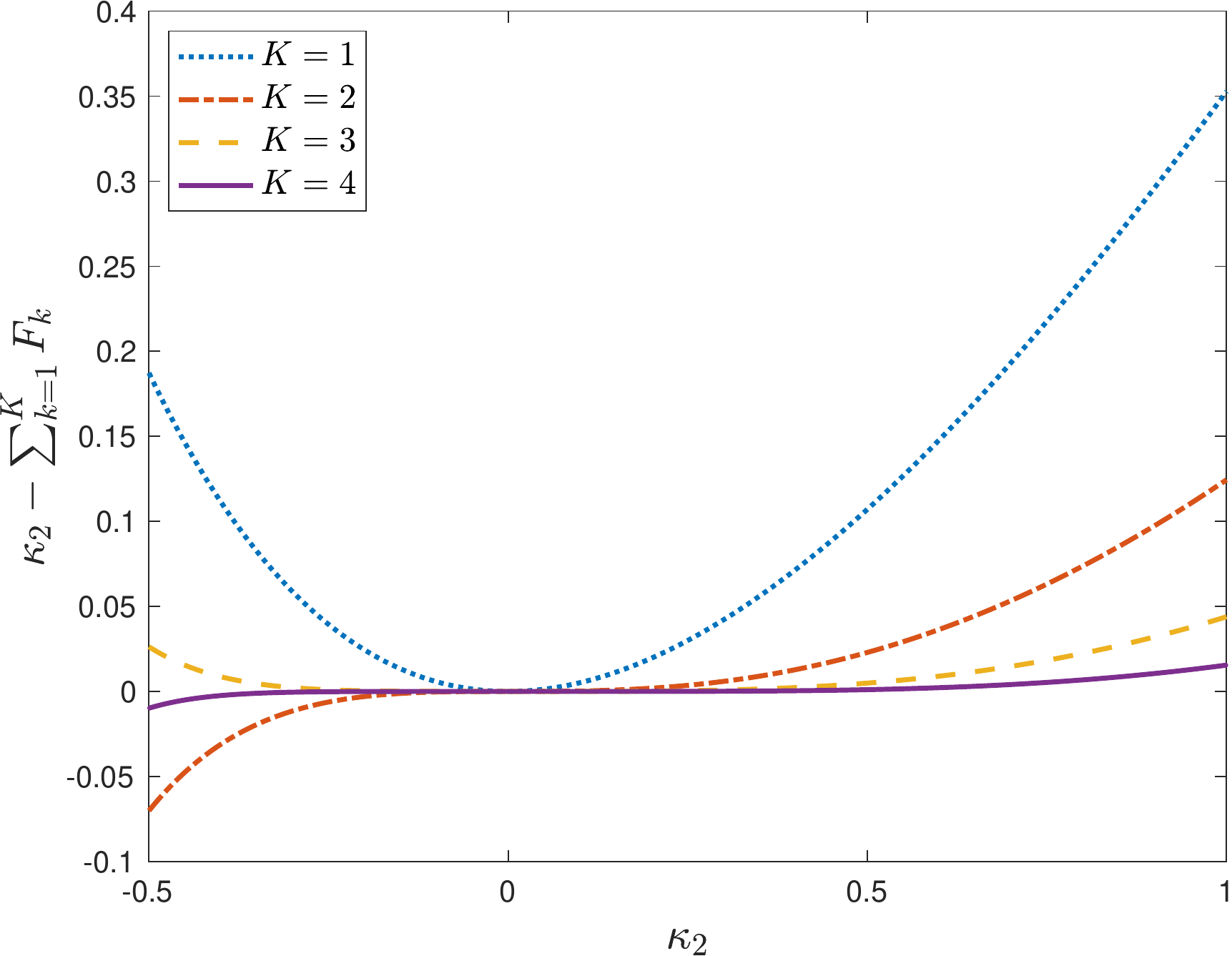}
		\caption{The employed current density is $f_1$ and  the radius of the inner disk is $\rho = 0.3$. Left: The signed errors $\kappa_1 - \sum_{k=1}^K F_k$, $K=1, \dots, 4$, as functions of $\kappa_1$ when it is {\em a priori} known that $\kappa_2=0$. Right: The signed errors $\kappa_2 - \sum_{k=1}^K F_k$, $K=1, \dots, 4$, as functions of $\kappa_2$ when it is {\em a priori} known that $\kappa_1=0$.} \label{fig:conv1}
	\end{figure}

	\end{example}
	
	\begin{example}
	
    Next we let $\rho = 1/\sqrt{2}$, so that the areas of $\Omega \setminus \overline{D_{\rho}}$ and $D_{\rho}$ are the same,~i.e.~$\pi/2$. We now consider reconstructing both parameters in $\kappa = (\kappa_1,\kappa_2)$. As there are two free parameters, it is natural to also employ two current patterns, and thus we choose $\mathscr{P}$ to be the orthogonal projection onto $\mspan\{f_1,f_2\}$. 
    
    \begin{figure}[htb]
    	\includegraphics[width=0.485\linewidth]{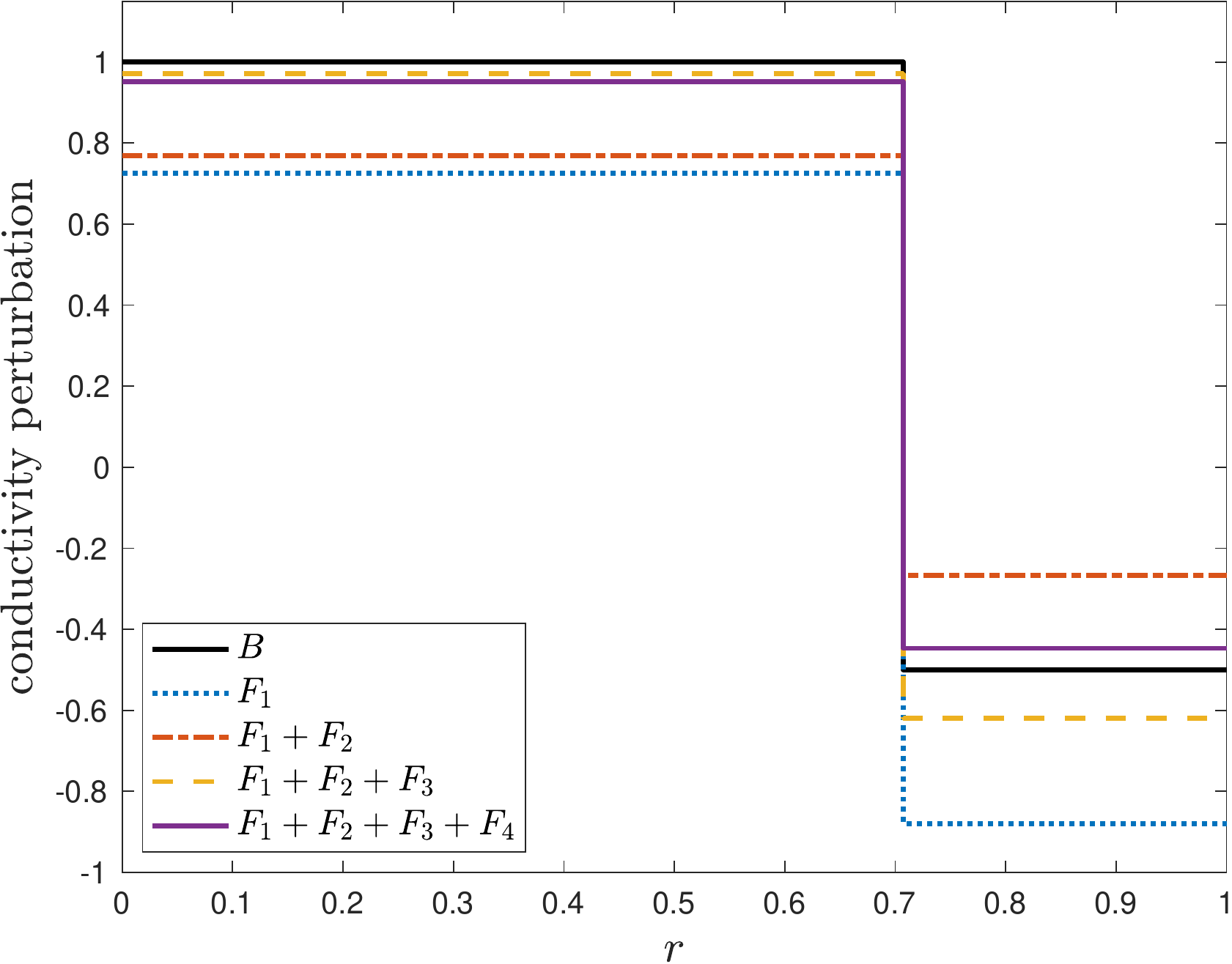} \hfill
    	\includegraphics[width=0.485\linewidth]{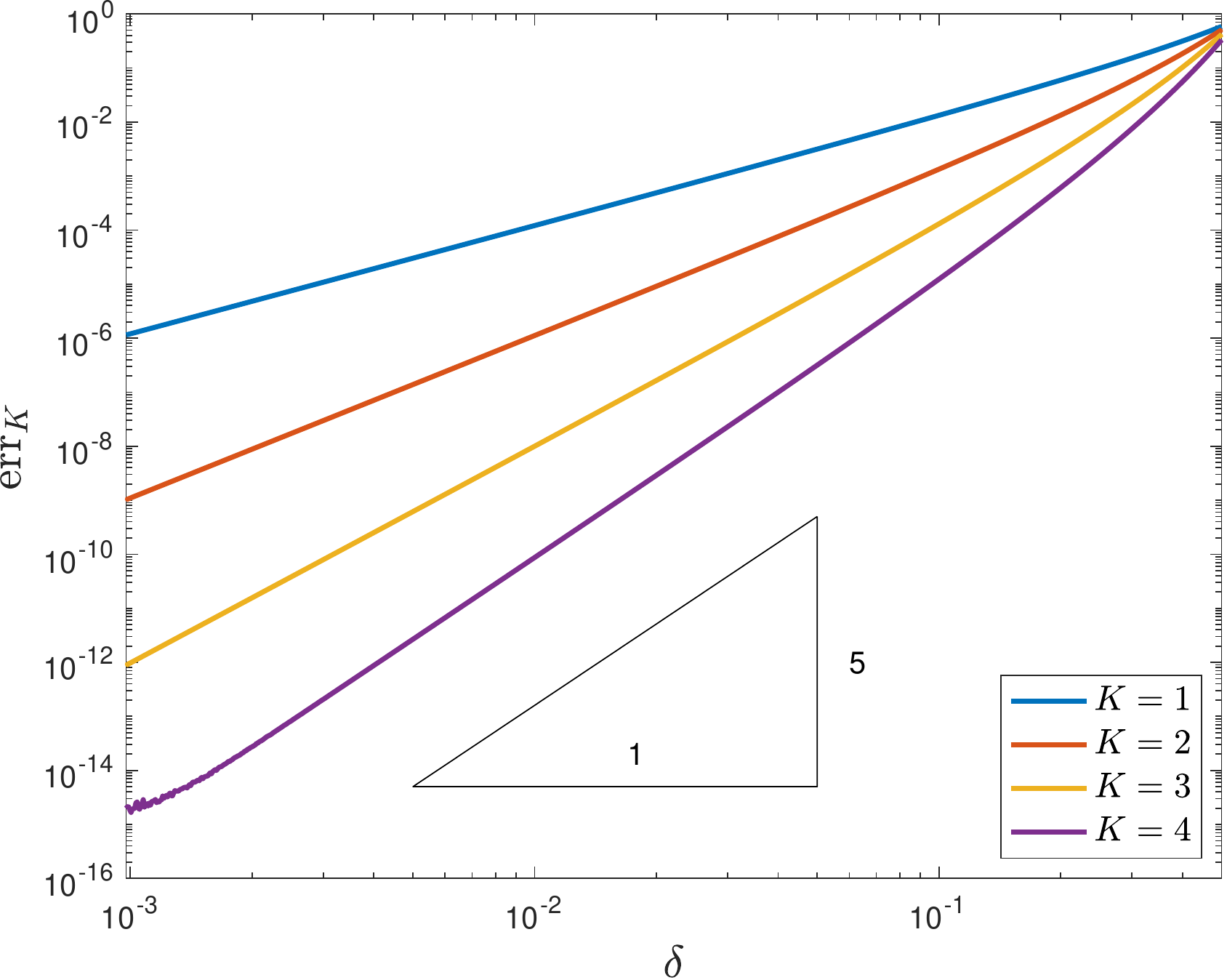}
    	\caption{The employed current densities are $f_1$ and $f_2$, and  the radius of the inner disk is $\rho = 1/\sqrt{2}$. Left: The isotropic perturbation $B$, with $\kappa = (-0.5,1)$, and the corresponding reconstructions $\sum_{k=1}^K F_k$, $K=1,\dots,4$, as functions of radial variable $r$. Right: The maximal $L^2(\Omega)$ reconstruction error ${\rm err}_K$, defined by \eqref{eq:max_error}, as a function of the Euclidean norm of the perturbations $\delta = |\kappa|$.} \label{fig:conv2}
    \end{figure}
    
    The left-hand image of Figure~\ref{fig:conv2} compares the reconstructions $\sum_{k=1}^K F_k$, as functions of the radial variable $r$, with the corresponding exact conductivity perturbation $B$ defined by $\kappa = (-0.5,1)$ in \eqref{eq:conB}. The reconstruction error over the whole of $\Omega$ decays as $K$ increases, but not monotonically in the interior disk $D_\rho$ when moving from $K=3$ to $K=4$. For completeness it should also be mentioned that the approximations provided by Theorem~\ref{thm:main} seem to diverge as $K$ increases if the target perturbation $B$ is significantly larger,~e.g.,~if it is defined by $\kappa = (-0.75,2)$. 

    The right-hand image of Figure~\ref{fig:conv2} depicts the maximal $L^2(\Omega)$ reconstruction errors
    \begin{equation} \label{eq:max_error}
    	{\rm err}_K(\delta) = \sqrt{\frac{\pi}{2}} \ \max_{ |\kappa| = \delta } \Big| \kappa - \sum_{k=1}^K F_k \Big|, \qquad K=1, \dots, 4,
   	\end{equation}
    as functions of the Euclidean norm of the perturbations $\delta$, where $\kappa$ and $F_k$ are interpreted as elements of $\C^2$ on the right-hand side. The convergence rates predicted by \eqref{eq:Bseries} are clearly visible in the right-hand image of Figure~\ref{fig:conv2}. This numerically verifies Theorem~\ref{thm:main} in the considered simple setting, as the $L^2$-norm is equivalent to the $L^\infty$-norm for piecewise constant functions on a fixed partition of a bounded domain. The source of the wriggles at the very left end of the curve corresponding to $K=4$ is presumably floating point accuracy.
	
	\end{example}
         
    \subsection*{Acknowledgments}

    This work is supported by the Academy of Finland (decision 336789) and the Aalto Science Institute (AScI). In addition, HG is supported by The Research Foundation of DPhil Ragna Rask-Nielsen and is associated with the Aarhus University DIGIT Centre, and NH is supported by Jane and Aatos Erkko Foundation via the project Electrical impedance tomography --- a novel method for improved diagnostics of stroke.
        
    \appendix

    \section{$Q$ as an orthogonal projection} \label{sec:HS}
        
    If $\mathcal{Y} = \mathscr{F}(\mathcal{W})$ is finite-dimensional, as is always the case in numerical considerations as well as for the SCEM, one can avoid explicitly applying a projection $Q$ onto $\mathcal{Y}$ in the definition of $F_1$ via replacing $\mathscr{F}^{-1}: \mathcal{Y} \to \mathcal{W}$ by the corresponding Moore--Penrose pseudoinverse. Be that as it may, it is also interesting to consider equipping a suitable space of linear operators on $L^2_\diamond(\Gamma)$ with an inner product, which immediately leads to a systematic way of projecting onto any closed subspace. To this end, we restrict our attention to the CM in a two-dimensional bounded simply connected domain $\Omega \subset \R^2$ with a $C^{1, \alpha}$ boundary for some $\alpha > 0$. It turns out that under these assumptions $Q$ can be chosen to be the orthogonal projection onto $\mathcal{Y}$ in the Hilbert space of Hilbert--Schmidt operators on $L^2_\diamond(\Gamma)$, assuming $\Gamma = \partial \Omega$. Most of the results presented in this appendix do not hold in higher spatial dimensions, but the generalisation to the case of partial data, i.e.~$\Gamma\not = \partial \Omega$, is not explicitly excluded.
        
    Let $H_1$ and $H_2$ be separable Hilbert spaces and recall that a compact linear operator $L : H_1 \to H_2$ belongs to the Schatten class $\mathscr{L}_p(H_1,H_2)$, $1 \leq p < \infty$, if its nonincreasing sequence of singular values $\{\sigma_{L,k}\}_{k \in \N}\subset\mathbb{R}_+$ belongs to $\ell^p(\mathbb{N})$. The upper limit $\mathscr{L}_\infty(H_1,H_2)$ is identified with the space of compact operators $\mathscr{L}_{\rm C}(H_1,H_2)$. The class $\mathscr{L}_p(H_1,H_2)$, $1 \leq p < \infty$, becomes a Banach space when equipped with the norm
    \begin{equation} \label{eq:Schatten_norm}
	    \norm{L}_{\mathscr{L}_p(H_1,H_2)}^p = \sum_{k=1}^\infty {\sigma_{L,k}}^p, \qquad L \in \mathscr{L}_p(H_1,H_2).
    \end{equation}
    The spaces $\mathscr{L}_1(H_1,H_2)$ and $\mathscr{L}_2(H_1,H_2)$ are called the trace-class operators and the Hilbert--Schmidt operators, respectively. The class of Hilbert--Schmidt operators $\mathscr{L}_2(H_1,H_2)$ is a Hilbert space when equipped with the inner product
    \begin{equation} \label{eq:HS_inner}
	    \langle L_1, L_2 \rangle_{\mathscr{L}_2(H_1,H_2)} = \sum_{k=1}^\infty \langle L_1 \phi_k, L_2 \phi_k \rangle_{H_2},
    \end{equation}
    where $\{\phi_j\}_{j \in\N}$ is an orthonormal basis for $H_1$. The definition \eqref{eq:HS_inner} can be shown to be independent of the choice of the orthonormal basis; choosing $\{\phi_j\}_{j \in\N}$ to be an orthonormal eigenbasis for $L^* L$, with $L_1=L$ and $L_2=L$, demonstrates that the inner product \eqref{eq:HS_inner} is concordant with the definition of the Schatten norm \eqref{eq:Schatten_norm} when $p=2$.~\cite[Chapter~7.1]{Weidmann1980}
         
    In the following $I \in \R^{2 \times 2}$ denotes the identity matrix, and $\Lambda(I)$ is thus the ND map for the unit conductivity.
    \begin{lemma}  \label{lemma:LHS}
 		Assume $\Gamma = \partial \Omega$ and that $\Omega \subset \R^2$ is bounded, simply connected, and has a $C^{1, \alpha}$ boundary for $\alpha > 0$. Then  $\Lambda(I): L^2_\diamond(\partial\Omega) \to L^2_\diamond(\partial\Omega)$ belongs to the Schatten class $\mathscr{L}_p(L^2_\diamond(\partial\Omega))$ for any $p>1$. In particular, $\Lambda(I)$ is a Hilbert--Schmidt operator, but it is not trace-class.
    \end{lemma}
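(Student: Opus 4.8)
The plan is to reduce the general domain to the unit disk by a conformal change of variables, for which the ND map is explicitly diagonal, and then to transport the resulting singular-value asymptotics back to $\Omega$ through bounded, boundedly invertible operators. Throughout, $\sigma_k$ denotes the $k$-th singular value in nonincreasing order. First I would use the Riemann mapping theorem to fix a conformal bijection $\Phi : \mathbb{D} \to \Omega$ from the unit disk. Because $\partial \Omega$ is $C^{1,\alpha}$, the Kellogg--Warschawski theorem ensures that $\Phi$ extends to a $C^{1,\alpha}$ diffeomorphism of the closures with $\Phi'$ Hölder continuous and nowhere vanishing up to $\partial \mathbb{D}$; in particular $\abs{\Phi'}$ is bounded above and bounded away from zero on $\partial \mathbb{D}$. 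Let $\phi = \Phi|_{\partial \mathbb{D}}$ be the induced boundary diffeomorphism, write $C_\phi : g \mapsto g \circ \phi$ for the associated composition operator, and $M_{\abs{\Phi'}}$ for multiplication by $\abs{\Phi'}$.

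Since in two dimensions harmonic functions pull back to harmonic functions under $\Phi$, and the conformal scaling yields the boundary rule $\partial_n (u \circ \Phi) = \abs{\Phi'}\, (\partial_\nu u)\circ \phi$ on $\partial \mathbb{D}$, the ND map of $\Omega$ satisfies (modulo the ground-level correction discussed below)
\[
\Lambda(I) = C_\phi^{-1}\, \Lambda_{\mathbb{D}}\, M_{\abs{\Phi'}} C_\phi,
\]
where $\Lambda_{\mathbb{D}}$ is the ND map of the unit disk. The change-of-variables formula, using that the Jacobian $\abs{\phi'} = \abs{\Phi'}$ is bounded above and below, shows that $C_\phi$ (hence $C_\phi^{-1}$) is bounded and boundedly invertible between the relevant $L^2$ spaces, and $M_{\abs{\Phi'}}$ is a bounded, boundedly invertible multiplication operator. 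Moreover $M_{\abs{\Phi'}}C_\phi$ maps $L^2_\diamond(\partial\Omega)$ onto $L^2_\diamond(\partial \mathbb{D})$, since the weight $\abs{\Phi'}$ is exactly the boundary Jacobian and therefore preserves the vanishing-mean (total current) condition.

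One point requires care: the mean-free normalisation fixes the ground level of the potential differently on $\partial\Omega$ and on $\partial \mathbb{D}$, so the Dirichlet trace produced by $C_\phi^{-1}\Lambda_{\mathbb{D}}(\,\cdots)$ must be corrected by an additive constant to land in $L^2_\diamond(\partial\Omega)$. This correction is a single orthogonal projection of norm at most one, i.e.\ a rank-one modification, which affects neither the Schatten-class membership nor the asymptotic decay rate of the singular values; I would make this explicit by keeping such a projection on the outside of the factorisation, where it only contributes a harmless factor in the singular-value inequalities. With this in place, the explicit disk spectrum $\Lambda_{\mathbb{D}} : f_j \mapsto \abs{j}^{-1} f_j$, $j \in \Z'$ (each value $1/n$ occurring for $j = \pm n$), gives that $\sigma_k(\Lambda_{\mathbb{D}})$ equals a constant multiple of $1/k$ up to a bounded shift of indices.

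Finally I would transfer the asymptotics using the submultiplicativity $\sigma_k(S_1 T S_2) \leq \norm{S_1}\norm{S_2}\,\sigma_k(T)$. Applied to the factorisation above this yields the upper bound $\sigma_k(\Lambda(I)) \leq C/k$, and applied to the symmetric factorisation obtained by interchanging the roles of $\Omega$ and $\mathbb{D}$ (via $\Phi^{-1}$) it yields a matching lower bound $\sigma_k(\Lambda(I)) \geq c/k$. Summing $p$-th powers then compares $\sum_k \sigma_k(\Lambda(I))^p$ with $\sum_k k^{-p}$, which converges exactly when $p > 1$ and diverges for $p = 1$. This proves $\Lambda(I) \in \mathscr{L}_p(L^2_\diamond(\partial\Omega))$ for every $p > 1$, in particular that it is Hilbert--Schmidt ($p = 2$), while the divergence of the harmonic series shows that it is not trace-class. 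The main obstacle I anticipate is the clean derivation and bookkeeping of the conformal transformation rule --- pinning down the weight $\abs{\Phi'}^{\pm 1}$ and the placement of $C_\phi$, and rigorously justifying that the ground-level correction is genuinely a finite-rank, norm-controlled adjustment that leaves the singular-value asymptotics intact.
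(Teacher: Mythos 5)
Your proposal is correct and follows essentially the same route as the paper: a conformal transplantation to the unit disk (with Kellogg--Warschawski regularity of the boundary map), the explicit spectrum $1/\abs{j}$ of the disk ND map, and transfer of the Schatten-class bounds through the bounded composition and Jacobian-weight operators. The only difference is cosmetic --- you phrase the transfer via the singular-value inequality $\sigma_k(S_1TS_2)\leq\norm{S_1}\norm{S_2}\sigma_k(T)$ and treat the mean-free/ground-level correction and the lower bound for the non-trace-class claim more explicitly than the paper, which leaves these to ``basic properties of Schatten operators''.
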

    \begin{proof}
    Let $D \subset \R^2$ be the open unit disk and $\Phi: \Omega \to D$ a bijective conformal mapping. Since $\partial \Omega$ is of class $C^{1, \alpha}$, the map $\Phi$ extends to a bijective map of $\overline{\Omega}$ onto $\overline{D}$. Moreover, the extension of $\Phi$ onto $\partial \Omega$ and that of its inverse $\Psi$ onto $\partial D$ are continuously differentiable \cite[Theorem~3.6 \& Exercise~3.3.5]{Pommerenke1992}.

    It follows from,~e.g.,~\cite[Lemma~4.1 \& Remark~4.1]{Hyvonen_2018} that $\Lambda(I)$ can be factored as
    \begin{equation*}
        \Lambda(I) = \mathcal{P} \mathcal{C}_\Phi \widetilde{\Lambda}(I) \mathcal{M}_{\Psi},
    \end{equation*}
    where $\mathcal{P}$ is the orthogonal projection of $L^2(\partial \Omega)$ onto $L^2_\diamond(\partial \Omega)$,
    \begin{equation*}
        \mathcal{C}_{\Phi}:
        	\left\{
        	\begin{array}{l}
        		g \mapsto g \circ \Phi, \\[1mm]
        		L^2(\partial D) \to L^2(\partial \Omega),
        	\end{array}
        	\right.
    \end{equation*}
    and
    \begin{equation*}
        \mathcal{M}_{\Psi}:
        	\left\{
        	\begin{array}{l}
        		g \mapsto |\Psi'| (g\circ \Psi), \\[1mm]
        		L^2_\diamond(\partial \Omega) \to L^2_\diamond(\partial D).
        	\end{array}
        	\right.
    \end{equation*}
	Furthermore, $\widetilde{\Lambda}(I): L^2_\diamond(\partial D) \to L^2_\diamond(\partial D)$ is the ND map for the unit disk with unit conductivity, known to admit the spectral representation
    \begin{equation} \label{eq:D_spectral}
	    \widetilde{\Lambda}(I): \varphi_j \mapsto \frac{1}{|j|}\varphi_j, \quad j \in \Z \setminus \{ 0 \},
    \end{equation}
    where $\{ \varphi_j \}_{j \in \Z \setminus \{ 0 \}}$ is the standard complex orthonormal Fourier basis for $L^2_\diamond(\partial D)$.

    Since $\widetilde{\Lambda}(I): L^2_\diamond(\partial D) \to L^2_\diamond(\partial D)$ is self-adjoint, \eqref{eq:D_spectral} yields
    \begin{equation*}
        \norm{\widetilde{\Lambda}(I)}_{\mathscr{L}_p(L^2_\diamond(\partial \Omega))}^p =  2 \sum_{j=1}^\infty \frac{1}{j^p} < \infty, \quad p > 1,
    \end{equation*}
    and thus $\widetilde{\Lambda}(I) \in \mathscr{L}_p(L^2_\diamond(\partial D))$ for any $p > 1$. Since $\mathcal{M}_\Psi$, $\mathcal{C}_\Phi$, and $\mathcal{P}$ are bounded due to our assumptions on $\Omega$, the assertion follows from basic properties of Schatten operators~\cite[Theorem~7.8(c)]{Weidmann1980}.
	\end{proof}

    Take note that the above lemma fails in higher spatial dimensions as the eigenvalues of the ND map for the unit ball with unit conductivity are not even square-summable if $d>2$. Indeed, although the eigenvalues of such an ND map are still $1/j$ for $d > 2$, their multiplicity grows at least linearly in $j$ with the exact rate depending on $d$; cf.,~e.g.,~\cite[Proposition~3.3]{Garde2020}.
        
    The following theorem employs the mean free Sobolev spaces $H^s_\diamond(\partial\Omega)$ defined in \eqref{eq:Hsdiamond}.
    \begin{theorem} \label{thm:appendix}
	    Assume that $\Omega\subset\mathbb{R}^2$ is bounded, simply connected, and has a $C^{1, \alpha}$ boundary for $\alpha > 0$. If $L\in\mathscr{L}(H^{-s}_\diamond(\partial \Omega),H^{s}_\diamond(\partial \Omega))$ for some $s\in (0,\tfrac{1}{2}]$, then
        \begin{equation*}
	    	\norm{L}_{\mathscr{L}_p(L^2_\diamond(\partial \Omega))} \leq C \norm{L}_{\mathscr{L}(H^{-s}_\diamond(\partial \Omega),H^{s}_\diamond(\partial \Omega))},  \qquad p > \frac{1}{2s},
        \end{equation*}
        where $C = C(p, s, \Omega) > 0$ is independent of $L$. In particular, $L: L^2_\diamond(\partial \Omega) \to L^2_\diamond(\partial \Omega)$ is a Hilbert--Schmidt operator if $s>\tfrac{1}{4}$.
     \end{theorem}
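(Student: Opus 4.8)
The plan is to transfer the problem to the unit disk, where the relevant operator has an explicit spectral decomposition, and then to factor $L$ through a fractional power of the disk ND map so that the Schatten bound follows from a H\"older inequality for Schatten norms.

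First I would reduce to the disk. Let $\Phi:\Omega\to D$ and $\Psi=\Phi^{-1}$ be the conformal maps from the proof of Lemma~\ref{lemma:LHS}, whose boundary traces are $C^1$ diffeomorphisms by Kellogg--Warschawski. Using the composition operators $\mathcal{C}_\Phi:g\mapsto g\circ\Phi$ and $\mathcal{C}_\Psi:g\mapsto g\circ\Psi$ (together with the mean-free projections used in Lemma~\ref{lemma:LHS}), I would conjugate $L$ into an operator $\widehat L=\mathcal{C}_\Psi L\,\mathcal{C}_\Phi$ on the disk. The point is that these transfer operators are bounded isomorphisms simultaneously on $L^2_\diamond(\partial D)$ and on $H^{\pm s}_\diamond$, $s\in(0,\tfrac12]$: composition by a $C^1$ diffeomorphism is bounded on $H^\sigma$ for $|\sigma|\le 1$, and the Jacobian weights $|\Phi'|,|\Psi'|$ arising in the dual (negative-order) direction act as pointwise multipliers on the relevant $H^\sigma$. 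Consequently $\widehat L\in\mathscr{L}(H^{-s}_\diamond(\partial D),H^s_\diamond(\partial D))$ with $\|\widehat L\|\le C\|L\|$, while the ideal property of Schatten classes gives $\|L\|_{\mathscr{L}_p(L^2_\diamond(\partial\Omega))}\le C\|\widehat L\|_{\mathscr{L}_p(L^2_\diamond(\partial D))}$; it therefore suffices to prove the estimate on the disk.

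On the disk I would exploit the explicit spectrum \eqref{eq:D_spectral}. Writing $J^s$ for the operator $\varphi_j\mapsto|j|^{-s}\varphi_j$ and using $\|\sum a_j\varphi_j\|_{H^\sigma}^2\simeq\sum|j|^{2\sigma}|a_j|^2$, the map $J^s:H^\sigma_\diamond(\partial D)\to H^{\sigma+s}_\diamond(\partial D)$ is an isometric isomorphism; in particular $J^{-s}:H^s_\diamond\to L^2_\diamond$ and $J^{-s}:L^2_\diamond\to H^{-s}_\diamond$ are bounded. I would then factor
\[
\widehat L = J^s\,\bigl(J^{-s}\widehat L\,J^{-s}\bigr)\,J^s,
\]
so that $\widehat M:=J^{-s}\widehat L J^{-s}:L^2_\diamond\to L^2_\diamond$ is bounded with $\|\widehat M\|_{\mathscr{L}(L^2_\diamond)}\le C\|\widehat L\|_{\mathscr{L}(H^{-s}_\diamond,H^s_\diamond)}$. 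From \eqref{eq:Schatten_norm} and the eigenvalues $|j|^{-s}$ (each of multiplicity two) one has $J^s\in\mathscr{L}_{2p}(L^2_\diamond(\partial D))$ precisely when $2\sum_{j\ge1}j^{-2ps}<\infty$, i.e.\ when $p>\tfrac1{2s}$. The H\"older inequality for Schatten norms with exponents $(2p,\infty,2p)$ then yields
\[
\|\widehat L\|_{\mathscr{L}_p}\le \|J^s\|_{\mathscr{L}_{2p}}\,\|\widehat M\|_{\mathscr{L}(L^2_\diamond)}\,\|J^s\|_{\mathscr{L}_{2p}}=\|J^s\|_{\mathscr{L}_{2p}}^2\,\|\widehat M\|_{\mathscr{L}(L^2_\diamond)},
\]
which is finite for $p>\tfrac1{2s}$. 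Chaining with the reduction step gives the asserted bound, and the final claim follows by taking $p=2$, since $2>\tfrac1{2s}$ is equivalent to $s>\tfrac14$, so that $L$ is then Hilbert--Schmidt.

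The main obstacle I expect is the first step, namely establishing the Sobolev mapping properties of the conformal transfer operators up to the endpoint $s=\tfrac12$. Since $\partial\Omega$ is only $C^{1,\alpha}$, the derivatives $\Phi',\Psi'$ are merely H\"older continuous of exponent $\alpha$, so controlling multiplication by the Jacobian weights on $H^{\pm s}_\diamond$ (and the composition operators at $s=\tfrac12$) requires the theory of pointwise multipliers on fractional Sobolev spaces rather than a naive Leibniz estimate; keeping these bounds uniform and ensuring the transfer operators remain invertible on $L^2_\diamond$ is the delicate part. Everything else --- the spectral computation on the disk and the Schatten--H\"older bookkeeping --- is routine.
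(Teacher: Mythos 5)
Your core mechanism --- factor $L$ through fractional-order smoothing operators lying in $\mathscr{L}_{2p}$ and apply the H\"older inequality for Schatten norms with exponents $(2p,\infty,2p)$ --- is exactly the engine of the paper's proof, and your spectral bookkeeping on the disk (including the thresholds $p>\tfrac{1}{2s}$ and $s>\tfrac14$ for $p=2$) is correct. However, the reduction step you yourself flag as delicate is a genuine gap, not merely a technicality. You need the conjugated operator $\widehat L=\mathcal{C}_\Psi L\,\mathcal{C}_\Phi$ to be bounded from $H^{-s}_\diamond(\partial D)$ to $H^{s}_\diamond(\partial D)$, which requires the transfer operators (and, via duality, multiplication by the Jacobian weights $\abs{\Phi'},\abs{\Psi'}$) to be bounded on $H^{\pm s}$ for every $s\in(0,\tfrac12]$. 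Under a $C^{1,\alpha}$ boundary these weights are only $C^{0,\alpha}$, and multiplication by a $C^{0,\beta}$ function is a bounded multiplier on $H^{s}$ only for $s<\beta$ in general; since $\alpha>0$ may be arbitrarily small, the multiplier theory you invoke does not cover $s\geq\alpha$, and in particular fails to reach the endpoint $s=\tfrac12$. So the transfer of the \emph{fractional} Sobolev mapping property to the disk cannot be completed by the route you describe.

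The paper sidesteps this entirely: it never transfers $L$ or the spaces $H^{\pm s}_\diamond(\partial\Omega)$ to the disk. Instead it uses the eigenbasis $\{\phi_j\}$ and eigenvalues $\{\lambda_j\}$ of $\Lambda(I)$ on $\partial\Omega$ itself, together with the fact (quoted from \cite[Appendix~B]{Garde_2019c}) that $\norm{f}_r^2=\sum_k\lambda_k^{-2r}\abs{\inner{f,\phi_k}_{\partial\Omega}}^2$ is an equivalent norm on $H^r_\diamond(\partial\Omega)$ for $\abs{r}\leq\tfrac12$. With these norms the embedding $\mathcal{I}_s:H^s_\diamond(\partial\Omega)\to L^2_\diamond(\partial\Omega)$ is diagonal with singular values $\lambda_j^{s}$, and the factorisation $L|_{L^2_\diamond}=\mathcal{I}_s L\,\mathcal{I}_s'$ plays the role of your $J^s(J^{-s}\widehat L J^{-s})J^s$. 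The conformal map enters only inside Lemma~\ref{lemma:LHS}, at the $L^2$ level where $\mathcal{C}_\Phi$ and $\mathcal{M}_\Psi$ are unproblematically bounded, to establish $\sum_j\lambda_j^{t}<\infty$ for $t>1$. If you replace your disk-transfer step by this norm-equivalence device, the rest of your argument goes through verbatim.
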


     \begin{proof}
     	As in Lemma~\ref{lemma:LHS}, let $\Lambda(I): L^2_\diamond(\partial \Omega) \to L^2_\diamond(\partial \Omega)$ be the ND map corresponding to unit conductivity in $\Omega$ and $\Gamma = \partial\Omega$.  Denote by $\{ \phi_j \}_{j\in \N} \subset L^2_{\diamond}(\partial \Omega)$ eigenfunctions for $\Lambda(I)$, forming an orthonormal basis for $L^2_{\diamond}(\partial \Omega)$, and let $\{ \lambda_j\}_{j\in \N} \subset \R_+$ be the corresponding eigenvalues. By virtue of Lemma~\ref{lemma:LHS}, \eqref{eq:Schatten_norm}, and the self-adjointness of $\Lambda(I)$,
        \begin{equation} \label{eq:LSchatten}
		    \norm{\Lambda(I)}_{\mathscr{L}_t(L^2_\diamond(\partial \Omega))}^t = \sum_{j=1}^\infty \lambda_j^t < \infty
        \end{equation}
        for $t>1$. Moreover, notice that
        \begin{equation*}
		    \norm{f}_{r}^2 = \sum_{k=1}^\infty \lambda_k^{-2r}\abs{\inner{f, \phi_k}_{\partial \Omega}}^2, \qquad f \in H_\diamond^r(\partial \Omega),
        \end{equation*}
        defines an equivalent norm in $H^r_\diamond(\partial \Omega)$ for any $-\tfrac{1}{2} \leq r \leq \tfrac{1}{2}$ \cite[Appendix~B]{Garde_2019c}. The corresponding inner product for $H^r_\diamond(\partial \Omega)$ is
        \begin{equation*} 
	        \inner{f, g}_r = \sum_{k=1}^\infty \lambda_k^{-2r} \inner{f, \phi_k}_{\partial \Omega}\inner{\phi_k, g}_{\partial \Omega}
        \end{equation*}
        for $f,g \in H^r_\diamond(\partial \Omega)$.

        Let $\mathcal{I}_s: H^{s}_\diamond(\partial \Omega) \to  L^2_\diamond(\partial \Omega)$ be the embedding of $H^{s}_\diamond(\partial \Omega)$ into $L^2_\diamond(\partial \Omega)$. It is straightforward to check that $\psi_{j,s} = \lambda_j^{s} \phi_j$, $j \in \N$, form an orthonormal basis for $H^{s}_\diamond(\partial \Omega)$ with respect to the inner product $\langle \, \cdot \, ,  \, \cdot \,\rangle_s$ and that $\mathcal{I}_s$ is characterised by the singular value decomposition
        \begin{equation*}
        	\mathcal{I}_s: \psi_{j,s} \mapsto \lambda_j^{s} \phi_j, \qquad j \in \N.
        \end{equation*}
        Hence,
        \begin{equation} \label{eq:embed_schatten}
        	\norm{\mathcal{I}_s}_{\mathscr{L}_q(H^{s}_\diamond(\partial \Omega), L^2_\diamond(\partial \Omega))}^q = \sum_{j=1}^\infty (\lambda_j^{s})^q < \infty, \qquad q > \frac{1}{s},
        \end{equation}
        by virtue of \eqref{eq:LSchatten}, and thus $\mathcal{I}_s \in \mathscr{L}_q(H^{s}_\diamond(\partial \Omega), L^2_\diamond(\partial \Omega))$ for $q > 1/s$.  

		The dual operator $\mathcal{I}'_s: L^2_\diamond(\partial \Omega) \to H^{-s}_\diamond(\partial \Omega)$ is itself the embedding of $L^2_\diamond(\partial \Omega)$ into $H^{-s}_\diamond(\partial \Omega)$. In particular,
        \begin{equation*}
        	L|_{L^2_\diamond(\partial \Omega)} = \mathcal{I}_s\, L\, \mathcal{I}_s'.
        \end{equation*}
        Due to the H\"older inequality for the Schatten norms \cite[Theorem~7.8(b)]{Weidmann1980}, we finally have 
        \begin{align*}
            \norm{L}_{\mathscr{L}_p(L^2_\diamond(\partial \Omega))} &\leq C \norm{\mathcal{I}_s}_{\mathscr{L}_{2p}(H^{s}_\diamond(\partial \Omega), L^2_\diamond(\partial \Omega))} \norm{L \mathcal{I}_s'}_{\mathscr{L}_{2p}(L^2_\diamond(\partial \Omega), H^s_\diamond(\partial \Omega))} \\%[1mm]
            &\leq C \norm{\mathcal{I}_s}_{\mathscr{L}_{2p}(H^{s}_\diamond(\partial \Omega), L^2_\diamond(\partial \Omega))}^2  \norm{L}_{\mathscr{L}(H^{-s}_\diamond(\partial \Omega), H^s_\diamond(\partial \Omega))},
        \end{align*}
        where we also used \cite[Theorem~7.8(c)]{Weidmann1980} and the fact that the Schatten norms of an operator and its dual obviously coincide. The assertion now follows from \eqref{eq:embed_schatten}.
    \end{proof}

    Under the assumptions of Section~\ref{sec:continuum} and if $\Gamma = \partial \Omega$, $\Lambda(A+B) \in \mathscr{L}(H^{-1/2}_{\diamond}(\partial \Omega), H^{1/2}_{\diamond}(\partial \Omega))$ for small enough $B$, and furthermore the Taylor series \eqref{eq:Lseries} converges in $\mathscr{L}(H^{-1/2}_{\diamond}(\partial \Omega), H^{1/2}_{\diamond}(\partial \Omega))$. These facts follow straightforwardly as  $T: H^1_\diamond(\Omega) \to H^{1/2}_\diamond(\partial \Omega)$ and $N(A): H^{-1/2}_\diamond(\partial \Omega) \to H^1_\diamond(\Omega)$ are bounded, which allows for considering the finer topology of $\mathscr{L}(H^{-1/2}_{\diamond}(\partial \Omega), H^{1/2}_{\diamond}(\partial \Omega))$ instead of that of $\mathscr{L}(L^2_{\diamond}(\partial \Omega))$; see Remark~\ref{rm:quotient}. Under the assumptions of Lemma~\ref{lemma:LHS} on $\Omega$ and $\Gamma$, one may thus consider $\mathscr{L}_{2}(\mathcal{X}) \subset \mathscr{L}_{\infty}(\mathcal{X}) = \mathscr{L}_{\rm C}(\mathcal{X}) $ in place of both $\mathscr{L}(\mathcal{X})$ and $\mathscr{L}_{\rm C}(\mathcal{X})$ in Section~\ref{sec:reversion} and, in particular, the projection $Q$ can be chosen systematically as the orthogonal projection onto $\mathcal{Y}$ in $\mathscr{L}_{2}(\mathcal{X})$.
        
	\bibliographystyle{plain}

\begin{thebibliography}{10}
		
		\bibitem{Alberti2019}
		G.~S. Alberti and M.~Santacesaria.
		\newblock Calder{\'{o}}n's inverse problem with a finite number of
		measurements.
		\newblock {\em Forum Math. Sigma}, 7:e35, 2019.
		
		\bibitem{Alberti2020}
		G.~S. Alberti and M.~Santacesaria.
		\newblock Calder{\'{o}}n's inverse problem with a finite number of measurements
		{II}: independent data.
		\newblock {\em Appl. Anal.}, 2020.
		\newblock To appear.
		
		\bibitem{Alberti2021}
		G.~S. Alberti and M.~Santacesaria.
		\newblock Infinite dimensional compressed sensing from anisotropic measurements
		and applications to inverse problems in {PDE}.
		\newblock {\em Appl. Comput. Harmon. A.}, 50:105--146, 2021.
		
		\bibitem{Alessandrini2017}
		G.~Alessandrini, M.~V. de~Hoop, and R.~Gaburro.
		\newblock Uniqueness for the electrostatic inverse boundary value problem with
		piecewise constant anisotropic conductivities.
		\newblock {\em Inverse Problems}, 33(12), 2017.
		\newblock Article ID 125013.
		
		\bibitem{Alessandrini2018}
		G.~Alessandrini, M.~V. de~Hoop, R.~Gaburro, and E.~Sincich.
		\newblock {EIT} in a layered anisotropic medium.
		\newblock {\em Inverse Probl. Imag.}, 12(3):667--676, 2018.
		
		\bibitem{Arridge_2012}
		S.~Arridge, S.~Moskow, and J.~C. Schotland.
		\newblock Inverse {B}orn series for the {C}alderon problem.
		\newblock {\em Inverse Problems}, 28(3), 2012.
		\newblock Article ID 035003.
		
		\bibitem{Astala2006a}
		K.~Astala and L.~P{\"a}iv{\"a}rinta.
		\newblock {C}alder\'on's inverse conductivity problem in the plane.
		\newblock {\em Ann. Math.}, 163(1):265--299, 2006.
		
		\bibitem{Astala2005}
		K.~Astala, L.~P{\"a}iv{\"a}rinta, and M.~Lassas.
		\newblock Calder\'on's inverse problem for anisotropic conductivity in the
		plane.
		\newblock {\em Comm. PDE}, 30(1--2):207--224, 2005.
		
		\bibitem{Borcea2002a}
		L.~Borcea.
		\newblock Electrical impedance tomography.
		\newblock {\em Inverse Problems}, 18(6):R99--R136, 2002.
		
		\bibitem{Borcea2002}
		L.~Borcea.
		\newblock Addendum to ``{E}lectrical impedance tomography".
		\newblock {\em Inverse Problems}, 19(4):997--998, 2003.
		
		\bibitem{Calderon1980}
		A.~P. {C}alder{\'o}n.
		\newblock On an inverse boundary value problem.
		\newblock In {\em Seminar on {N}umerical {A}nalysis and its {A}pplications to
			{C}ontinuum {P}hysics}, pages 65--73. Soc. Brasil. Mat., Rio de Janeiro,
		1980.
		
		\bibitem{Garde2020c}
		V.~Candiani, J.~Dard\'e, H.~Garde, and N.~Hyv{\"o}nen.
		\newblock Monotonicity-based reconstruction of extreme inclusions in electrical
		impedance tomography.
		\newblock {\em SIAM J. Math. Anal.}, 52(6):6234--6259, 2020.
		
		\bibitem{CaroRogers2016}
		P.~Caro and K.~M. Rogers.
		\newblock Global uniqueness for the {C}alder{\'o}n problem with {L}ipschitz
		conductivities.
		\newblock {\em Forum Math. Pi}, 4:e2, 2016.
		
		\bibitem{Cheney1999}
		M.~Cheney, D.~Isaacson, and J.~C. Newell.
		\newblock Electrical impedance tomography.
		\newblock {\em SIAM Review}, 41(1):85--101, 1999.
		
		\bibitem{Cheng89}
		K.-S. Cheng, D.~Isaacson, J.~S. Newell, and D.~G. Gisser.
		\newblock Electrode models for electric current computed tomography.
		\newblock {\em IEEE Trans. Biomed. Eng.}, 36:918--924, 1989.
		
		\bibitem{Darde21}
		J.~Dard\'{e}, N.~Hyv\"{o}nen, T.~Kuutela, and T.~Valkonen.
		\newblock Electrodeless electrode model for electrical impedance tomography.
		\newblock {\em Preprint arXiv:2102.01926}, 2021.
		
		\bibitem{Engl1996}
		H.~W. Engl, M.~Hanke, and A.~Neubauer.
		\newblock {\em Regularization of inverse problems}.
		\newblock Kluwer Academic Publishers, 1996.
		
		\bibitem{Fernandes1997}
		P.~Fernandes and G.~Gilardi.
		\newblock Magnetostatic and electrostatic problems in inhomogeneous anisotropic
		media with irregular boundary and mixed boundary conditions.
		\newblock {\em Math. Models Methods Appl. Sci.}, 7(7):957--991, 1997.
		
		\bibitem{Ferreira2009}
		D.~Ferreira, C.~Kenig, J.~Sj{\"o}strand, and G.~Uhlmann.
		\newblock On the linearized local {C}alder{\'{o}}n problem.
		\newblock {\em Math. Res. Lett.}, 16(6):955--970, 2009.
		
		\bibitem{Garde_2019b}
		H.~Garde.
		\newblock Reconstruction of piecewise constant layered conductivities in
		electrical impedance tomography.
		\newblock {\em Comm. PDE}, 45(9):1118--1133, 2020.
		
		\bibitem{Garde2020}
		H.~Garde and N.~Hyv\"onen.
		\newblock Optimal depth-dependent distinguishability bounds for electrical
		impedance tomography in arbitrary dimension.
		\newblock {\em SIAM J. Appl. Math.}, 80(1):20--43, 2020.
		
		\bibitem{GardeHyvonen2021}
		H.~Garde and N.~Hyv{\"o}nen.
		\newblock Mimicking relative continuum measurements by electrode data in
		two-dimensional electrical impedance tomography.
		\newblock {\em Numer. Math.}, 147(3):579--609, 2021.
		
		\bibitem{Garde_2019c}
		H.~Garde, N.~Hyv\"onen, and T.~Kuutela.
		\newblock On regularity of the logarithmic forward map of electrical impedance
		tomography.
		\newblock {\em SIAM J. Math. Anal.}, 52(1):197--220, 2020.
		
		\bibitem{GardeStaboulis_2016}
		H.~Garde and S.~Staboulis.
		\newblock Convergence and regularization for monotonicity-based shape
		reconstruction in electrical impedance tomography.
		\newblock {\em Numer. Math.}, 135(4):1221--1251, 2017.
		
		\bibitem{Hanke03}
		M.~Hanke and M.~Br\"{u}hl.
		\newblock Recent progress in electrical impedance tomography.
		\newblock {\em Inverse Problems}, 19(6):S65--S90, 2003.
		
		\bibitem{Harrach_2019}
		B.~Harrach.
		\newblock Uniqueness and {L}ipschitz stability in electrical impedance
		tomography with finitely many electrodes.
		\newblock {\em Inverse Problems}, 35(2), 2019.
		\newblock Article ID 024005.
		
		\bibitem{Harrach2021}
		B.~Harrach.
		\newblock An {I}ntroduction to {F}inite {E}lement {M}ethods for {I}nverse
		{C}oefficient {P}roblems in {E}lliptic {PDE}s.
		\newblock {\em {J}ahresber. {D}tsch. {M}ath.}, 123(3):183--210, 2021.
		
		\bibitem{Harrach10}
		B.~Harrach and J.~K. Seo.
		\newblock Exact shape-reconstruction by one-step linearization in electrical
		impedance tomography.
		\newblock {\em SIAM J. Math. Anal.}, 42(4):1505--1518, 2010.
		
		\bibitem{Harrach13}
		B.~Harrach and M.~Ullrich.
		\newblock Monotonicity-based shape reconstruction in electrical impedance
		tomography.
		\newblock {\em SIAM J. Math. Anal.}, 45(6):3382--3403, 2013.
		
		\bibitem{Hyvonen2004}
		N.~Hyv{\"o}nen.
		\newblock Complete electrode model of electrical impedance tomography:
		Approximation properties and characterization of inclusions.
		\newblock {\em SIAM J. Appl. Math.}, 64(3):902--931, 2004.
		
		\bibitem{Hyvonen09}
		N.~Hyv{\"o}nen.
		\newblock Approximating idealized boundary data of electric impedance
		tomography by electrode measurements.
		\newblock {\em Math. Models Methods Appl. Sci.}, 19(7):1185--1202, 2009.
		
		\bibitem{Hyvonen2017}
		N.~Hyv{\"o}nen and L.~Mustonen.
		\newblock Smoothened complete electrode model.
		\newblock {\em SIAM J. Appl. Math.}, 77(6):2250--2271, 2017.
		
		\bibitem{Hyvonen_2018}
		N.~Hyv\"onen, L.~P\"aiv\"arinta, and J.~P. Tamminen.
		\newblock Enhancing {D}-bar reconstructions for electrical impedance tomography
		with conformal maps.
		\newblock {\em Inverse Probl. Imag.}, 12(2):373--400, 2018.
		
		\bibitem{Imanuvilov2010}
		O.~Y. Imanuvilov, G.~Uhlmann, and M.~Yamamoto.
		\newblock The {C}alder{\'o}n problem with partial data in two dimensions.
		\newblock {\em J. Amer. Math. Soc.}, 23(3):655--691, 2010.
		
		\bibitem{Imanuvilov_2015}
		O.~Y. Imanuvilov, G.~Uhlmann, and M.~Yamamoto.
		\newblock The {N}eumann-to-{D}irichlet map in two dimensions.
		\newblock {\em Adv. Math.}, 281:578--593, 2015.
		
		\bibitem{Isakov2007}
		V.~Isakov.
		\newblock On uniqueness in the inverse conductivity problem with local data.
		\newblock {\em Inverse Probl. Imag.}, 1:95--105, 2007.
		
		\bibitem{Kenig_2013}
		C.~Kenig and M.~Salo.
		\newblock The {C}alder{\'{o}}n problem with partial data on manifolds and
		applications.
		\newblock {\em Anal. {PDE}}, 6(8):2003--2048, 2013.
		
		\bibitem{Kenig_2014}
		C.~Kenig and M.~Salo.
		\newblock Recent progress in the {C}alder{\'{o}}n problem with partial data.
		\newblock {\em Contemp. Math.}, 615:193--222, 2014.
		
		\bibitem{Kohn1985}
		R.~Kohn and M.~Vogelius.
		\newblock Determining conductivity by boundary measurements {II}. {I}nterior
		results.
		\newblock {\em Comm. Pure Appl. Math.}, 38(5):643--667, 1985.
		
		\bibitem{Lechleiter2008}
		A.~Lechleiter and A.~Rieder.
		\newblock Newton regularizations for impedance tomography: convergence by local
		injectivity.
		\newblock {\em Inverse Problems}, 24(6), 2008.
		\newblock Article ID 065009.
		
		\bibitem{Nachman1996}
		A.~I. Nachman.
		\newblock Global uniqueness for a two-dimensional inverse boundary value
		problem.
		\newblock {\em Ann. Math.}, 143:71--96, 1996.
		
		\bibitem{Nachman2010}
		A.~I. Nachman and B.~Street.
		\newblock Reconstruction in the {C}alder{\'o}n problem with partial data.
		\newblock {\em Comm. {PDE}}, 35(2):375--390, 2010.
		
		\bibitem{Pommerenke1992}
		C.~Pommerenke.
		\newblock {\em Boundary behaviour of conformal maps}, volume 299 of {\em
			Grundlehren der Mathematischen Wissenschaften}.
		\newblock Springer-Verlag, Berlin, 1992.
		
		\bibitem{Somersalo1992}
		E.~Somersalo, M.~Cheney, and D.~Isaacson.
		\newblock Existence and uniqueness for electrode models for electric current
		computed tomography.
		\newblock {\em SIAM J. Appl. Math.}, 52(4):1023--1040, 1992.
		
		\bibitem{Sylvester1990}
		J.~Sylvester.
		\newblock An anisotropic inverse boundary value problem.
		\newblock {\em Comm. Pure Appl. Math.}, 43(2):201--232, 1990.
		
		\bibitem{Sylvester1987}
		J.~Sylvester and G.~Uhlmann.
		\newblock A global uniqueness theorem for an inverse boundary value problem.
		\newblock {\em Ann. Math.}, 125:153--169, 1987.
		
		\bibitem{Thorp1960}
		E.~O. Thorp.
		\newblock Projections onto the subspace of compact operators.
		\newblock {\em Pac. J. Math.}, 10(2):693--696, 1960.
		
		\bibitem{Uhlmann2009}
		G.~Uhlmann.
		\newblock Electrical impedance tomography and {C}alder\'on's problem.
		\newblock {\em Inverse Problems}, 25(12), 2009.
		\newblock Article ID 123011.
		
		\bibitem{Valent1988}
		T.~Valent.
		\newblock {\em Boundary Value Problems of Finite Elasticity}.
		\newblock Springer New York, 1988.
		
		\bibitem{Weidmann1980}
		J.~Weidmann.
		\newblock {\em Linear operators in {H}ilbert spaces}, volume~68 of {\em
			Graduate Texts in Mathematics}.
		\newblock Springer-Verlag, New York-Berlin, 1980.
		
	\end{thebibliography}

\end{document}